\theoremstyle{plain}
\numberwithin{equation}{section}
\newtheorem{theorem}{Theorem}[section]
\newtheorem{corollary}[theorem]{Corollary}
\newtheorem{lemma}[theorem]{Lemma}
\newtheorem{example}[theorem]{Example}
\newtheorem{remark}[theorem]{Remark}
\newenvironment{proofad1}{\removelastskip\par\medskip
\noindent{\textbf {Proof of Theorem \ref{main1}}.}
\rm}{\penalty-20\null\hfill$\square$\par\medbreak} 
\newenvironment{proofad2}{\removelastskip\par\medskip
\noindent{\textbf {Proof of Theorem \ref{mainbis}}.}
\rm}{\penalty-20\null\hfill$\square$\par\medbreak} 
\newenvironment{proofad3}{\removelastskip\par\medskip
\noindent{\textbf {Proof of Theorem \ref{main3}}.}
\rm}{\penalty-20\null\hfill$\square$\par\medbreak}
\newenvironment{proofad4}{\removelastskip\par\medskip
\noindent{\textbf {Proof of Theorem \ref{main4}}.}
\rm}{\penalty-20\null\hfill$\square$\par\medbreak}
\definecolor{darkred}{rgb}{0.8,0,0}
\definecolor{darkblue}{rgb}{0,0,0.7}
\definecolor{darkgreen}{rgb}{0,0.4,0}
\newcommand{\EEE}{\color{black}} 
\newcommand{\eps}{\varepsilon}
\newcommand{\R}{{\mathbb R}}
\newcommand{\un}{{\rm 1\kern -2.5pt l}}
\def\argmin{\mathop{{\rm argmin}}\nolimits}
\newcommand{\BBB}{\color{black}}
\newcommand{\MMM}{\color{magenta}}
\title{Euler's optimal profile problem}
\author{Francesco Maddalena, Edoardo Mainini,  Danilo Percivale}
\date{}
\begin{document}

\newcommand{\Addresses}{{
  \bigskip
  \footnotesize
 Edoardo Mainini (Corresponding author), \textsc{Universit\`a degli Studi di Genova,
Dipartimento di     Ingegneria meccanica, energe\-tica, gestionale
e dei trasporti (DIME), Via all'Opera Pia 15, I-16145 Genova, Italy}
\par\nopagebreak
  \textit{E-mail address}: \texttt{mainini@dime.unige.it}\\

 \medskip
  Danilo Percivale, \textsc{Universit\`a degli Studi di Genova, Dipartimento di   Ingegneria meccanica, energe\-tica, gestionale
e dei trasporti (DIME), Via all'Opera Pia 15, I-16145 Genova, Italy}
\par\nopagebreak
  \textit{E-mail address}: \texttt{percivale@diptem.unige.it}\\
	
	\medskip
		
	Francesco Maddalena, \textsc{Politecnico di Bari, Dipartimento di Meccanica,
	Matematica, Management, via Re David 200, 70125 Bari, Italy}
	\par\nopagebreak
 \textit{E-mail address}: \texttt{francesco.maddalena@poliba.it}
}}




\maketitle

\begin{abstract}
We study an old variational problem formulated by  Euler as Proposition 53 of his {\it Scientia Navalis}
by means of the direct method of the calculus of variations. Precisely, through  relaxation arguments, 
we prove the existence of minimizers. 
  We fully investigate the analytical structure of the minimizers in dependence of the geometric parameters and we identify the ranges of uniqueness and non-uniqueness.
\end{abstract}

{\small
\textsc{Key words}: {Calculus of variations, variational integrals, shape optimization}

\medskip

{\textsc{Mathematics Subject Classification}}: 49Q10, 49K30
}

%
%

\section{Introduction}
L. Euler in his treatise {\it Scientia Navalis} (1749), which  is considered to be one of the cornerstones of the eighteenth century naval architecture, 
 at Proposition 53, formulated the following optimal profile problem (see \cite{E}, \cite{L}).\\

{\it Among all curves AM which with the axis AP and perpendicular PM comprehend the same area, to find that one which with its symmetric branch on the opposite side of the axis AP will form the figure offering the least resistance in water when it moves in the direction PA along the axis} (Fig.1).\\  

The  problem can be viewed as a variant of the celebrated Newton's aerodynamic problem (Proposition 34 of Book 2 of the Principia, 1687, \cite{N}) which relies in optimizing the shape of a solid of revolution,
moving in a fluid along its axis, experiencing the least resistance, at parity of length and caliber. Actually, at Proposition 65 of the same 
treatise, Euler studies in different terms a very similar problem.
Newton's problem of 
minimal resistance was the first  {\it solved} problem in  the  calculus of variations (by Newton himself a decade before
 the {\it brachistochrone problem}, see \cite{g}) and assumes a fluid like medium made by particles of equal mass moving at a constant velocity
with a fixed direction, while the dynamic  interaction between solid and fluid is only due to the perfectly elastic collisions between the fluid particles and the surface of the solid body. 
Though Newton's constitutive assumptions ruling the fluid-solid interaction seems  too crude to copy the complex physical phenomena occurring at the interface 
(strongly influenced by the properties of the fluid and the dynamic features of the motion, \cite{MP}), certainly they capture the essential basic ingredients of the problem. 
Let us recall that  the {\it drag problem} is one of the oldest problems in 
fluid mechanics and at present it
 still seems to be out of reach of analytical results,
for realistic Reynolds numbers.
On the other hand, from a mathematical perspective, the variational integral representing the resistance functional is 
neither coercive nor convex, hence a natural route to prove existence of a minimum via the direct method relies in imposing additional constraints on the admissible shapes. These arguments explain the reasons the oldest problem of the calculus of variations still  provides continuous inspirations for new and challenging problems: 
we refer, for instance, to \cite{BeK,BW, Bo, B1,BK, BFK1,BFK2,BG,CL1,CL2,HKV,LO,LP1,LP2,Leg,M1,M2,MMOP1,MMOP2,p2,p1,W}.

Unlike Newton's problem, the Euler optimal profile problem,  
as far as the authors know, 
 has never been studied in the framework of modern calculus of variations, with the only exception of the paper \cite{BW} which deals with a constrained Newton's problem in a special class of admissible functions.
 
In analytical terms the problem  admits the following formulation.
Given  $a>0$, $h>0$, $L\in(0,ah)$, find a curve $\gamma:[0,1]\rightarrow \R^2$, $\gamma=(\gamma_1, \gamma_2)$,  such that 
$\gamma(0)=(0,0),\, \gamma(1)=(a,h)$, and such that (with the notation $z_+:=z\vee 0$)

\begin{equation}
\label{minresist}
\mathcal F(\gamma)=\int_0^1\frac{(\gamma_2')_+^3}{(\gamma_1')^2+(\gamma_2')^2}\,dt\rightarrow {\min},
\end{equation}
 subject to the area constraint

\begin{equation}
\label{constr}
\int_0^1\gamma_1(t)\gamma_2'(t)\,dt=ah-L.
\end{equation}

\begin{figure}[h!] 
\centering
\includegraphics[scale=0.8]{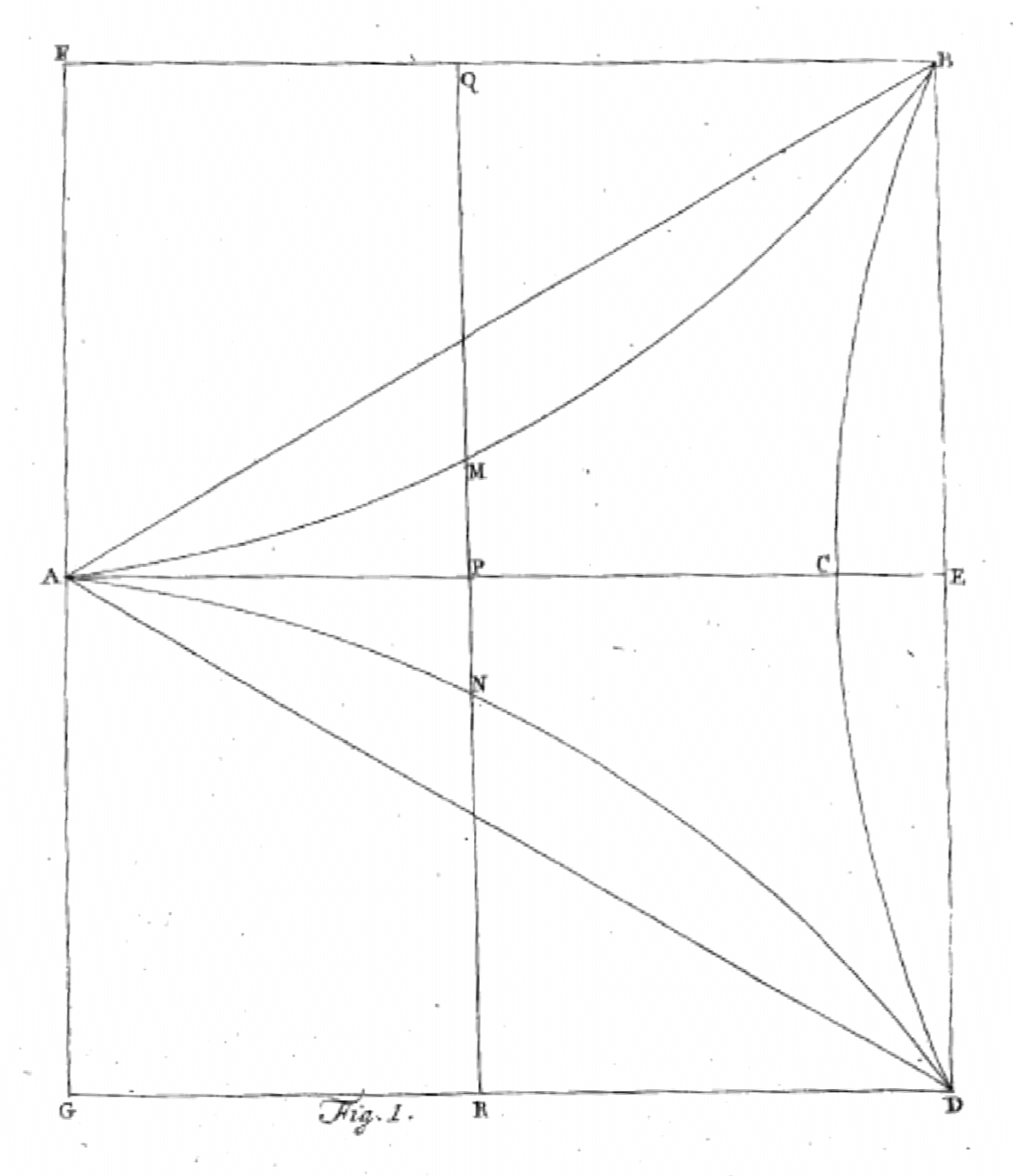}
 \caption{L. Euler, {\it Scientia Navalis}, 1749.} 
 \label{euleroboat} 
  \end{figure}

In fact, problem \eqref{minresist}-\eqref{constr} is a constrained Newton-like problem, since $L$ represents the area of the 
region between the curve $\gamma$ and the lines $y=0$ and $x=a$, taking $\{0;x,y\}$ as a coordinate system in $\R^2$.
L. Euler, after the problem statement ({\it Propositio 53, Scientia Navalis}, pg. 238) deduces the stationary  conditions in terms of differential equations and G.H. Light (in \cite{L}) proves that the  extremal curves are precisely { branches of} hypocycloids of three cusps. 
In this paper we provide an exhaustive solution of the problem \eqref{minresist}, \eqref{constr}, by exploiting the direct methods of the calculus of variations. It turns out that, { in the generality of  Euler's formulation, the problem doesn't admit a solution (see Example \ref{counterexample}). Indeed, we prove the existence of global minimizers (Theorem \ref{main1}) under the natural assumption $\gamma_1'\ge 0$. Then,  we study their precise analytical structure in dependence of the given geometric  parameters $a, h, L$. 
  In most cases,}
the optimal profile is the union of the graph of a convex or concave function (which is exactly Euler's solution)  and of a vertical segment
(Theorem \ref{explicit}). Moreover, 
 non-uniqueness of minimizers is shown to occur for certain ranges of the geometric parameters (Theorem \ref{main3}).
 
 {These results, obtained through relaxation techniques, seem to capture the essential ideas of naval architecture: indeed, it is easy to recognize that a lot of boat profiles are quite similar to the solutions of the Euler's problem (see Figure \ref{euleroboat}), suggesting that the global shapes realize a compromise between the 
dynamical performance and the total mass.}
On the other hand we guess that the non-uniqueness of solutions appearing for certain ranges of the parameters, 
suggests the possible occurrence of solutions exhibiting fine scale structures. 
Indeed, as it is well known \cite{DB} the skin of fast-swimming sharks is characterized (at the mesoscale) by the presence 
of riblet structures which are known to reduce skin friction drag in the turbulent-flow regime.
{ In this respect, it would be quite natural to ask 
if 
 a suitable modification of the Euler resistance 
 could  select a class of minimizers exhibiting at certain scales the riblet geometries which are responsible 
 of the impressive drag reduction characterizing the shark's skin, contributing in the comprehension of 
   this surprising natural morphology.}

\section{Statement of the problem and main results}\label{mainresults}

\subsection{Existence and uniqueness}

Let $a>0$, $h>0$ and $L\in (0,ah)$.
We shall introduce a suitable function space for the minimization of the resistance functional.
Starting from the original formulation of the problem, a natural choice is the class of rectifiable simple curves connecting  $(0,0)$ with $(a,h)$. Admissible curves should be contained in $[0,a]\times[0,h]$ and should split such rectangle in two subsets with prescribed areas $L$ and $ah-L$.
A rectifiable simple curve is an equivalence class:
the equivalence relation $\sim$ is given by orientation-preserving parametrizations, so that $\tilde\gamma\sim\gamma$ if a monotone nondecreasing mapping $\phi$ from $[0,1]$ onto itself exists such that $\tilde\gamma=\gamma\circ\phi$. 
We shall identify each rectifiable simple curve $\gamma$ with an absolutely continuous parametrization (still denoted by $\gamma$) such that $|\gamma'(t)|\neq 0$ a.e in $(0,1)$. 
Therefore, we set
\begin{equation*}
\begin{aligned}
\mathcal{A}^0_{a,h,L}:&=\left\{\gamma\in AC([0,1]; [0,a]\times[0,h]): \gamma(0)=(0,0),\, \gamma(1)=(a,h),{\color{white}\int}\right.
\\&\gamma\ \hbox{simple,}\ \left. |\gamma'(t)|\neq 0 \mbox{ for a.e. $t\in(0,1)$},\,
\int_0^1\gamma_1(t)\gamma_2'(t)\,dt=ah-L\right\}.
\end{aligned}\end{equation*}

 We also consider the class
 \[
 \mathcal{A}_{a,h,L}:=\{\gamma\in\mathcal{A}^0_{a,h,L}: \gamma_1'(t)\ge  0 \mbox{ for a.e. $t\in(0,1)$}\}
 \]
  and the minimization problem for functional $\mathcal F$ from \eqref{minresist}, that is,
\begin{equation}\label{problem1}
\min\left\{\mathcal F(\gamma):\:\gamma\in\mathcal{A}_{a,h,L}\right\}.
\end{equation}

The  following is our first main result.


\MMM

\BBB

\begin{theorem}\label{main1} Let $a>0$, $h>0$, $L\in(0,ah)$. The following properties hold.
\begin{itemize}
\item [ i)]  If $2L\notin(a^2,2ah-a^2)$ (in particular if $h\le a$), then there exists a unique solution to problem 
\eqref{problem1}.
\item [ii)] If  $2L\in(a^2,2ah-a^2)$, then there exist infinitely many solutions to problem \eqref{problem1}.
\end{itemize}
\end{theorem}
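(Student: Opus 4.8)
The plan is to reduce the geometric problem on curves to a one–dimensional problem for a $BV$ profile, then to solve the reduced problem essentially by hand, reading off uniqueness/non‑uniqueness from the degeneracy of the relevant convexification.

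\emph{Step 1: from curves to profiles.} Since $\gamma_1$ is nondecreasing from $0$ to $a$, a reparametrization identifies any $\gamma\in\mathcal A_{a,h,L}$ with the graph of a function $u\in BV((0,a);[0,h])$, completed by the vertical segments closing its (possibly Cantor) jumps and by the two endpoint segments joining $(0,0)$ to $(0,u(0^{+}))$ and $(a,u(a^{-}))$ to $(a,h)$; moreover $\int_0^a u\,dx=L$ and $\mathcal F$ is parametrization invariant. Computing the resistance on the graph part, where the integrand equals $g(u'_{ac})$ with $g(s):=s_+^3/(1+s^2)$, and on the vertical segments, where it equals the upward height and vanishes on downward pieces, and using that the total vertical displacement along $\gamma$ is $h$, one obtains the identity
\begin{equation*}
\mathcal F(\gamma)=h-\int_0^a\phi(u'_{ac})\,dx+(D^s u)^-((0,a)),\qquad
\phi(s):=\begin{cases}\dfrac{s}{1+s^2}&s\ge0,\\ s&s<0.\end{cases}
\end{equation*}
Thus minimizing $\mathcal F$ is equivalent to maximizing $\int_0^a\phi(u'_{ac})\,dx-(D^s u)^-((0,a))$ subject to $\int_0^a u\,dx=L$ and $0\le u\le h$.

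\emph{Step 2: relaxation and existence.} The function $\phi$ is $C^1$, bounded above by $\phi(1)=1/2$, concave on $(-\infty,1]$ and convex (decaying to $0$) on $[1,\infty)$, so its concave envelope is $\phi^{*}(s)=\phi(s)$ for $s\le1$ and $\phi^{*}\equiv 1/2$ for $s\ge1$, with recession function $(\phi^{*})^{\infty}(s)=\min(s,0)$. Hence the lower semicontinuous envelope of $\mathcal F$ over $BV$ is $\mathcal G(u):=h-\int_0^a\phi^{*}(u'_{ac})\,dx+(D^s u)^-((0,a))$, which is convex, coercive enough, and attains its minimum over the weakly-$\ast$ closed admissible set by the direct method. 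The crucial point is that \emph{at a minimizer one has $u'_{ac}\le1$ a.e.\ and $D^s u\ge0$}: any a.c.\ portion with slope $>1$ can be truncated to slope $1$, which raises $\int\phi^{*}(u'_{ac})$ and relaxes all constraints, and any descending portion can be deleted at no cost, the area being restored using the freedom still available in the jump part of the profile. On such profiles $\phi^{*}=\phi$ and $(D^s u)^-=0$, so $\mathcal G(u)=\mathcal F(\gamma)$ and the relaxed minimizer is an honest minimizer of $\mathcal F$ in $\mathcal A_{a,h,L}$; in particular problem \eqref{problem1} always has a solution.

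\emph{Step 3: explicit solution and the dichotomy.} Write $v:=u'_{ac}\ge0$ and let $\mu:=D^s u\ge0$, with the two endpoint segments absorbed into $\mu$ as a nonnegative measure on $[0,a]$. Then the constraints read $\int_0^a v\le h$, $\int_0^a v(t)(a-t)\,dt\le L$, $\int_0^a v(t)\,t\,dt\le ah-L$, together with the existence of $\mu\ge0$ with $\mu([0,a])=h-\int_0^a v$ and $\int_{[0,a]}(a-s)\,d\mu=L-\int_0^a v(t)(a-t)\,dt$, while the quantity to be maximized is $\int_0^a p(v)\,dx$ with $p(s)=s/(1+s^2)$, which depends on $v$ alone. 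One checks that $v\equiv1$ is admissible precisely when $h\ge a$ and $a^2\le 2L\le 2ah-a^2$; in that regime it is the (unconstrained) maximizer, so $\min\mathcal F=h-a/2$, and the only remaining freedom is in $\mu$. If $2L\in(a^2,2ah-a^2)$ the compatibility datum $\int(a-s)\,d\mu=L-a^2/2$ lies strictly inside $(0,a(h-a))=(0,a\,\mu([0,a]))$, so there is an infinite family of admissible $\mu$ — e.g.\ a single Dirac of mass $h-a$ placed at a point of $(0,a)$, or any splitting of it — each giving a distinct minimizing curve, which proves (ii). If instead $2L\in\{a^2,2ah-a^2\}$, this datum is forced to a corner ($0$ or $a\,\mu([0,a])$), so $\mu$ is a single Dirac at $a$ or at $0$ and the minimizer is unique. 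When $v\equiv1$ is not admissible (in particular whenever $h\le a$), the maximizer of $\int_0^a p(v)\,dx$ over the convex feasible set lies on its boundary, hence at least one of the three linear constraints is active; since one may assume $v\le1$ (as $p'<0$ beyond $1$, truncation only improves) and $p$ is strictly concave on $[0,1]$, the maximizer $\bar v$ is unique, and the active constraint again forces the compatibility datum to a corner value (or forces $\mu\equiv0$ when $\int\bar v=h$), so $\mu$ is unique as well; hence the minimizer is unique, which is (i). The Lagrange/KKT computation moreover shows that $\bar v$ vanishes on an initial (or final) interval and then increases monotonically to $1$, i.e.\ the optimal profile is a horizontal segment joined to the graph of a convex (or concave) function and a vertical segment, consistently with Theorem \ref{explicit}.

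The main obstacles I anticipate are: (a) the reduction in Step 2, namely proving rigorously that a minimizer must be a nondecreasing profile with a.c.\ slope $\le1$, i.e.\ that neither the relaxation nor the restriction to monotone curves loses infimum — this requires cut‑and‑paste surgery carefully made compatible with the area constraint; and (b) in case (i), checking that exactly the ``right'' constraint becomes active and that the maximizer $\bar v$ sits at a corner of the feasible region which makes $\mu$ rigid. The thresholds $2L=a^2$ and $2L=2ah-a^2$ are precisely the values at which the active constraint switches and at which $v\equiv1$ enters or leaves the feasible set, so these boundary cases need to be examined separately.
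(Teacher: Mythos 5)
Your architecture is essentially the paper's: identify a curve with a monotone $BV$ profile $u$, rewrite the resistance as $h$ minus a concave integral of the absolutely continuous slope plus a penalization of the negative singular part, relax, and read uniqueness off strict concavity of $s\mapsto s/(1+s^2)$ on $[0,1]$. Your Step 3 is the one genuinely different (and rather elegant) ingredient: describing the singular part by its total mass $m=h-\int_0^a v$ and first moment $A=L-\int_0^a v(t)(a-t)\,dt$, and observing that $\mu$ is rigid exactly when $(m,A)$ sits at a corner ($m=0$, $A=0$ or $A=am$) while infinitely many $\mu$ exist when $0<A<am$. This reproduces in a few lines what the paper obtains through Lemmas \ref{concaveconvex}, \ref{uniquelemma} and the explicit staircase constructions, and the threshold computation ($v\equiv1$ feasible iff $h\ge a$ and $a^2\le 2L\le 2ah-a^2$) matches the stated dichotomy exactly.

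The gaps are concentrated where the paper does its heavy lifting, and at least one is more than a technicality. First, for uniqueness in (i) it is not enough that non-monotone curves or profiles with $u'_{ac}>1$ \emph{do not lower} the infimum; you must show they are \emph{strictly} worse, otherwise a non-monotone minimizer would destroy uniqueness. Your phrase ``any descending portion can be deleted at no cost, the area being restored'' only gives equality of infima; the paper's Lemma \ref{+class} proves the quantitative strict inequality $\mathcal F(\gamma)\ge\inf\mathcal F+\varphi\bigl(\tfrac{y_p-y_q}{2}\bigr)$ via a monotone-rearrangement-plus-Jensen argument, and the area restoration itself (Lemma \ref{steps}, with the connectedness argument on the vertex set) is where the compatibility with the constraint $\int_0^a u=L$ is actually established. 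Second, your assertion that ``at a minimizer one has $u'_{ac}\le1$ a.e.'' is false for the relaxed functional (the paper's Remark \ref{infinity} exhibits relaxed minimizers with $\dot u>1$); it is true for minimizers of $\mathcal F$ itself, but by the pointwise inequality $g(s)>g^{**}(s)$ for $s>1$, which is a different argument from the truncation you invoke. Third, the reduction $u=\gamma_2\circ\gamma_1^{-1}$ for a general $\gamma\in\mathcal A_{a,h,L}$ only records the \emph{net} displacement on each maximal vertical piece, so your identity for $\mathcal F$ holds as an inequality $\ge$ for curves that oscillate vertically; this suffices for the lower bound but must be said, and the rigorous passage (the paper's Lemmas \ref{infcurve}, \ref{goodparametrization}, \ref{nojumps}) is not routine. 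Finally, the KKT step in (i) --- at least one of the three linear constraints is active, hence $(m,A)$ is at a corner --- is correct and can be made elementary (if all are slack, $\min(v+\eps,1)$ is feasible and strictly better unless $v\equiv1$), but as written it is asserted rather than proved.
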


The choice of the subclass  $\mathcal{A}_{a,h,L}$ 
is motivated by the fact that, without further constraints,
the problem  $\min\{\mathcal{F}(\gamma):\gamma\in\mathcal A^0_{a,h,L}\}$
admits no solution, as shown through the following 

\begin{example}\label{counterexample}\BBB
{\rm  Let $u:\R\rightarrow \R$ be a 1-periodic function defined as 

\[
u(t):=\left\{\begin{array}{lll}t\quad&\mbox{ if $0\leq t\leq \frac{1}{2}$}\\
1-t\quad&\mbox{ if $\frac{1}{2}\leq t\leq 1$}
\end{array}\right.
\]
and, for every  $n\in\mathbb N$, let $u_n(t)=u(nt)$, $t\in[0,1]$. 
Let us define $v_n\in AC[0,1]$ such that $v_n(0)=0$ and 
$v'_n(t)=\tfrac1n({u'_n}(t))_+$ for a.e. $t\in(0,1)$.
 Then we set 
\begin{equation}\label{xy}
x_n(t)=\left\{\begin{array}{lll}u_n(t)\quad&\mbox{ if $0\leq t\leq 1-\frac{1}{n}$}\medskip\\
\frac{n(t-1)+1}{2}\quad&\mbox{ if $1-\frac{1}{n}< t\leq 1,$}
\end{array}\right.
\quad\;\;
y_n(t)=\left\{\begin{array}{lll}v_n(t)\quad&\mbox{ if  $0\leq t\leq 1-\frac{1}{n}$}\medskip\\
\frac{t}{2}\quad&\mbox{ if  $1-\frac{1}{n}< t\leq 1$}
\end{array}\right.
\end{equation}

\noindent and we define $\gamma^n(t)=(x_n(t),y_n(t))$, $t\in[0,1]$. See Figure \ref{eulerocounter}. We have $\gamma^n(0)=(0,0)$, $\gamma^n(1)=(\frac{1}{2},\frac{1}{2})$ and $|(\gamma^n)'(t)|\neq 0$ for a.e. $t\in(0,1)$.
A direct computation shows that for every $n\in\mathbb N$ the area between the curve $\gamma^n$ and the
 lines $y=0$ and $x=\frac{1}{2}$ is
 \[
 \int_0^1 x_n(t)y_n'(t)\,dt=\sum_{j=0}^{n-2}\int_{j/n}^{(2j+1)/(2n)}(nt-j)\,dt+\int_{(n-1)/n}^1\frac{n(t-1)+1}{4}\,dt=\frac18.
 \]
  Thus, for any $n\in\mathbb N$ we have $\gamma^n\in \mathcal{A}^0_{a,h,L}$ with $a=h=\tfrac12$ and $L=ah-L=\tfrac18$. Moreover, another direct computation shows that
 $\mathcal F(\gamma^n)\rightarrow 0$ as $n\rightarrow \infty$.
Since $\mathcal F(\gamma)>0$ for every $\gamma\in  \mathcal{A}^0_{a,h,L}$, it follows that no minimizer exists. }
\end{example}

It is not difficult to modify the above example in order to see that, for any other value of $a,h,L$, there holds $\inf\{\mathcal F(\gamma):\gamma\in\mathcal A^0 _{a,h,L}\}=0$. Strong changing-sign oscillations of $\gamma_1'$ are indeed energetically favorable.

\begin{figure}[h!]
\begin{center}
\includegraphics[scale=0.4]{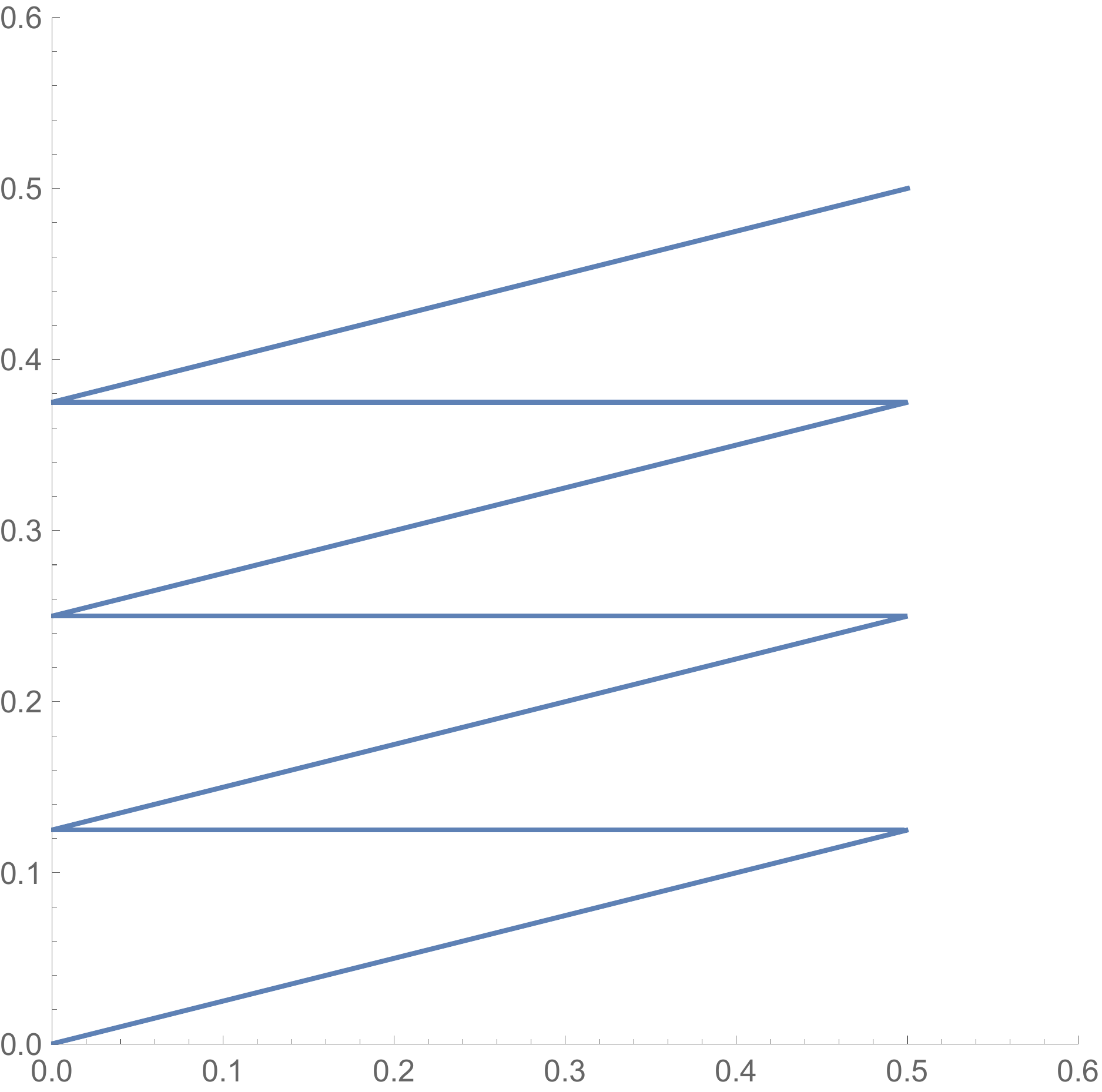}
\end{center}
 \caption{The curve $\gamma^n$, for $n=4$, in Example \ref{counterexample}}
 \label{eulerocounter}
  \end{figure}


\subsection{Representation of solutions}

In the uniqueness range of Theorem \ref{main1}, the form of the solution can be obtained through an explicit parametrization. Towards this end, we need  some more notation. Here and in the following let 
\begin{equation}\label{gi}g(z):=\frac{z_+^3}{1+z^2},\quad z\in\mathbb{R}.\end{equation}
 Let $\Psi: [0,1]^{2}\to \mathbb R$ and  $\Phi : [0,1]^{2}\to \mathbb R$ be defined by
\begin{equation}\label{L}\Psi(\xi,\eta):= h+\BBB 
a\int_{\xi}^{\eta}\frac{1-t^{2}}{(1+t^{2})^{2}}\frac{g'(t)-g'(\xi)}{g'(\eta)-g'(\xi)}\,dt-\frac{a\eta}{1+\eta^{2}},
\end{equation}
\begin{equation}\label{phi}				
\Phi(\xi,\eta):=
\frac{a^{2}\xi}{2}+\frac{a^{2}}{2}\int_{\xi}^{\eta}\left (\frac{g'(\eta)-g'(t)}{g'(\eta)-g'(\xi)}\right )^{2}\,dt,\BBB
\end{equation}
where the integral terms are understood to vanish in case $\xi=\eta$. Moreover,
let
\begin{equation}\label{T}
\mathcal T:=\left\{ (\xi,\eta): 0\le \xi\le\eta\le 1,\ \Phi(\xi,\eta)=L\right\}.
\end{equation}
Then we have
\begin{theorem} \label{mainbis}
\label{explicit} Let $a>0$, $h>0$.
Suppose that  $0<2L\le (ah)\wedge a^2$. \BBB
If $2L=(ah)\wedge a^2$, then the unique solution of problem \eqref{problem1} is given by the piecewise affine curve connecting the points $(0,0)$, $(a,a\wedge h)$ and $(a,h)$. \EEE  
Else  if $2L<(ah)\wedge a^2$, \EEE then there exists a unique minimizer $(\xi_{*},\eta_{*})$  of $\Psi$ on $\mathcal T$, there holds $\xi_{*}< \eta_{*}$, and the unique solution to problem \eqref{problem1} is
\begin{equation}\label{sol2}
\gamma_{*}(t)= \left\{\begin{array}{ll}(x_{*}(2t+\xi_{*}), y_{*}(2t+\xi_{*}))\quad &\mbox{if}\ \ t\in \left[0, \dfrac{\eta_{*}-\xi_{*}}{2}\right]\\
&\\
 (a,h+\dfrac{2(h-h_{*})}{2-\eta_{*}+\xi_{*}}(t-1))\quad &\mbox{if}\ \ t\in \left[\dfrac{\eta_{*}-\xi_{*}}{2}, 1\right]\\
\end{array}\right. 
\end{equation}
where 
\begin{equation}\label{parametrbis}\displaystyle
x_{*}(\tau):=\frac{a(g'(\tau)-g'(\xi_{*}))}{g'(\eta_{*})-g'(\xi_{*})},\ \ y_{*}(\tau):=\int_{\xi_{*}}^{\tau}sx_{*}'(s)\,ds,\ \ \tau\in [\xi_{*}, \eta_{*}]
\end{equation}
and $h_{*}:= y_{*}(\eta_{*})< h$.
\end{theorem}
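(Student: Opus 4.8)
\textbf{Proof strategy for Theorem \ref{explicit}.}

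The plan is to reduce problem \eqref{problem1} in the stated parameter range to a two-parameter finite-dimensional minimization, and then to analyze that finite-dimensional problem. First I would show that in the regime $2L\le(ah)\wedge a^2$ the optimal profile cannot use the whole rectangle's width before rising, so that a minimizer consists of a single ``rising'' arc followed by a terminal vertical segment at $x=a$; this is the structural information that the direct-method existence proof (Theorem \ref{main1}) must already provide, up to identifying the shape of the rising arc. On the rising arc I would parametrize by the slope variable $\tau=\gamma_2'/\gamma_1'$ (equivalently, exploit that Euler's stationarity forces $\gamma$ to be an arc of a three-cusped hypocycloid, as recalled from \cite{L}): the function $g$ from \eqref{gi} is the integrand $g(z)=z_+^3/(1+z^2)$ evaluated at this slope, and the Euler--Lagrange equation together with the area multiplier forces $\gamma_1'(\cdot)$ to be affine in $g'(\tau)$. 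This yields precisely the representation \eqref{parametrbis}: writing $x_*(\tau)=a\,(g'(\tau)-g'(\xi_*))/(g'(\eta_*)-g'(\xi_*))$ pins the arc down to the two endpoint-slope parameters $(\xi_*,\eta_*)$ with $0\le\xi_*\le\eta_*\le1$, and $y_*(\tau)=\int_{\xi_*}^\tau s\,x_*'(s)\,ds$ follows from $dy=\tau\,dx$.

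With the arc thus parametrized by $(\xi,\eta)$, I would compute the two geometric quantities in closed form. The enclosed area between the full profile (rising arc plus vertical segment) and the lines $y=0$, $x=a$ reduces, after integrating by parts and using $x_*'(\tau)$, to the expression $\Phi(\xi,\eta)$ in \eqref{phi}: the term $a^2\xi/2$ is the contribution of the initial flat-ish part and the integral $\tfrac{a^2}{2}\int_\xi^\eta\big((g'(\eta)-g'(t))/(g'(\eta)-g'(\xi))\big)^2dt$ is the area under the arc itself. So the area constraint \eqref{constr} becomes exactly $(\xi,\eta)\in\mathcal T$. Next, the terminal height produced by the rising arc is $h_*=y_*(\eta_*)$, and the resistance functional $\mathcal F$, evaluated on a profile of this form (the vertical segment contributes nothing since $\gamma_1'=0$, $\gamma_2'>0$ there gives $(\gamma_2')_+^3/(\gamma_2')^2$... wait — on the vertical segment $\gamma_1'=0$ so the integrand is $\gamma_2'$, not zero), so I must keep track of the vertical-segment contribution. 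After this bookkeeping, the value of $\mathcal F$ on an admissible profile of the given form turns out to be an affine function of the quantity $\Psi(\xi,\eta)$ defined in \eqref{L} — indeed $\Psi$ is built so that $\Psi(\xi,\eta)=h$ minus the ``height already gained by the arc'', and minimizing $\mathcal F$ subject to the area constraint is equivalent to minimizing $\Psi$ over $\mathcal T$. Establishing this equivalence — that $\mathcal F(\gamma)$ is, up to an additive constant and positive factor, $\Psi(\xi,\eta)$ — is the computational heart of the argument and I would do it by the change of variables $t\mapsto\tau$ on the arc, using $\gamma_1'\,d t=x_*'(\tau)\,d\tau$ and $g(\tau)/(1+\tau^2)$ simplifications.

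It then remains to treat the finite-dimensional problem $\min\{\Psi(\xi,\eta):(\xi,\eta)\in\mathcal T\}$. I would first check $\mathcal T\neq\emptyset$: since $\Phi(\xi,\xi)=a^2\xi/2$ is continuous and increasing from $0$ to $a^2/2\ge L$ as $\xi$ runs over $[0,1]$ when $a^2\le ah$, and otherwise one uses $\eta$ to reach the value $L$; continuity and the intermediate value theorem give a point of $\mathcal T$, and compactness of $\mathcal T\subset[0,1]^2$ plus continuity of $\Psi$ give existence of a minimizer $(\xi_*,\eta_*)$. For the boundary case $2L=(ah)\wedge a^2$ I would verify directly that the only element of $\mathcal T$ compatible with the constraints is the degenerate one $\xi_*=\eta_*$, which corresponds to the piecewise-affine curve through $(0,0),(a,a\wedge h),(a,h)$, and that this curve is the (unique) minimizer — here one checks that no rising arc of positive extent can satisfy the area constraint. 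For $2L<(ah)\wedge a^2$ I must show $\xi_*<\eta_*$ at the minimizer: a minimizer on the diagonal $\xi=\eta$ would force $a^2\xi_*/2=L$, i.e. a straight-then-vertical curve, and I would exhibit an admissible perturbation opening a small arc ($\eta>\xi$) that strictly decreases $\Psi$ — this is where one uses the strict inequality $2L<(ah)\wedge a^2$ to guarantee room to perturb. Finally, uniqueness of the solution to \eqref{problem1}: strict convexity-type properties of $g$ (namely that $g'$ is strictly monotone on the relevant slope range, so that distinct $(\xi,\eta)$ give genuinely distinct profiles) combine with the uniqueness of the minimizer of $\Psi$ on $\mathcal T$; the uniqueness of $(\xi_*,\eta_*)$ itself should follow from a strict monotonicity/convexity analysis of $\Psi$ along $\mathcal T$, or alternatively be inherited from the uniqueness already asserted in Theorem \ref{main1}(i) once we know every solution has the structural form above.

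\textbf{Main obstacle.} The principal difficulty is the closed-form reduction $\mathcal F(\gamma_*)\leftrightarrow\Psi(\xi,\eta)$ and $\text{area}\leftrightarrow\Phi(\xi,\eta)$: one must correctly identify the hypocycloidal arc as the only candidate for the rising part (invoking or reproving the Euler--Lagrange analysis), carry the vertical-segment contribution through all the integrations by parts, and recognize that after the slope change of variables the resistance is affine in $\Psi$. A secondary but real obstacle is proving $\xi_*<\eta_*$ strictly in the open range and ruling out degenerate competitors — this requires a careful first-variation argument at the diagonal of $\mathcal T$.
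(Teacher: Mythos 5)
Your overall plan coincides with the paper's route: reduce to a convex rising arc plus a terminal vertical segment, parametrize the arc by its slope so that the abscissa is affine in $g'(\tau)$, translate the area constraint into $\Phi(\xi,\eta)=L$ and the resistance into $\Psi(\xi,\eta)$, minimize $\Psi$ over the compact set $\mathcal T$, rule out the diagonal by a first-variation argument, and inherit uniqueness of $(\xi_*,\eta_*)$ from uniqueness of the minimizer. This is exactly what Lemmas \ref{explicit1}, \ref{xi<eta} and \ref{ultimolemma} do (compare your area and energy computations with \eqref{ellephi} and \eqref{J=L}, and your perturbation at $\xi=\eta=2L/a^2$ with the competitor $u_{\xi,\eta}$ and the derivative $\phi'(2L/a^2)<0$ in Lemma \ref{xi<eta}). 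However, there is a genuine gap at the foundation: you assume that the structural form of the minimizer (a single convex arc starting at $(0,0)$, vertical segment only at $x=a$, and validity of the Euler--Lagrange relation $g'(\dot u)=\overline\lambda x+\overline\mu$) is ``already provided'' by the existence proof of Theorem \ref{main1}. It is not; this is the hardest part of the paper. Since $\mathcal F$ is neither coercive nor convex and optimal profiles genuinely contain vertical segments, the paper must pass to the relaxed functional $\overline{\mathcal J}_+$ on $BV$, whose singular part accounts for the vertical pieces (Lemma \ref{representationlemma}), prove convexity and $u(0^+)=0$ in this parameter range (Lemma \ref{concaveconvex}, Corollary \ref{convconc}), and --- because $g^{**}$ is affine on $[1,\infty)$ and the class carries inequality and integral constraints --- derive the Euler--Lagrange equation only through the penalized functionals $\mathcal J_{\eps}$ and a limit passage (Lemma \ref{minJbar}). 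Without a substitute for this machinery, your reduction to the two-parameter class, and hence to $\mathcal T$, is unsupported.

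A second, smaller defect concerns the boundary case. When $h<a$ and $2L=ah$ it is not true that the diagonal point is the only element of $\mathcal T$: the map $\Phi$ does not encode the ceiling $y\le h$, and non-degenerate pairs $(\xi,\eta)$ with $\Phi(\xi,\eta)=L$ exist whose arcs overshoot the height $h$ (a strictly convex arc reaching height $H>h$ can still subtend area $ah/2$). The correct argument, which is how Corollary \ref{convconc} settles this case, is that an admissible convex profile has area at most $a\,u(a^-)/2\le ah/2=L$, with equality forcing the straight segment; the set $\mathcal T$ plays no role there, consistently with the theorem statement, which invokes $\Psi$ and $\mathcal T$ only when $2L<(ah)\wedge a^2$. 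Finally, a slip: the Euler--Lagrange equation forces $\gamma_1$ (the abscissa $x$), not $\gamma_1'$, to be affine in $g'(\tau)$, as your own formula for $x_*$ correctly displays.
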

It has been argued in \cite{L} that, whenever $t\in [0, \frac{\eta_{*}-\xi_{*}}{2}]$,  the parametrization given in \eqref{sol2}-\eqref{parametrbis} is that of a branch of an hypocycloyd with three vertices and it is worth noticing that  its trace is the graph of a convex function. In particular, if $2L<(ah)\wedge a^2$, the optimal profile is the union of the graph of such convex function and of a vertical segment of length $h-h_{*}>0$. 

We also notice that Theorem \ref{mainbis} covers only half of the uniqueness range of the parameters.  The other half is $2L\ge(ah)\vee(2ah-a^2)$. However, the parameters fall in the latter range if $L$ satisfying the assumptions of Theorem \ref{mainbis} is changed to $ah-L$.
In particular, if $2L> (ah)\vee (2ah-a^2)$, then the corresponding optimal profile becomes the graph of a concave function joined to a vertical segment of strictly positive length.
Indeed, given the solution $\gamma_*$  in $\mathcal{A}_{a,h,L}$ from \eqref{sol2}-\eqref{parametrbis} and letting $t_*=\tfrac{\eta_*-\xi_*}{2}$, we will prove later on that the solution in $\mathcal{A}_{a,h, ah-L}$ is   just obtained by reflection and precisely it is given by

\begin{equation}\label{sterne}
\tilde\gamma_{*}(t)= \left\{\begin{array}{ll}\gamma_*(t+t_*) -(a,h_*)\quad &\mbox{if}\ \ t\in \left[0, 1- t_*\right]\\
 (a,h)-\gamma_*(1-t)\quad &\mbox{if}\ \ t\in \left[1-t_*, 1\right].
\end{array}\right. 
\end{equation}
We refer to Figure \ref{zzz} for a plot of the solutions obtained with a numerical simulation.

\begin{figure}[h!] 
\begin{center}
\includegraphics[scale=0.4]{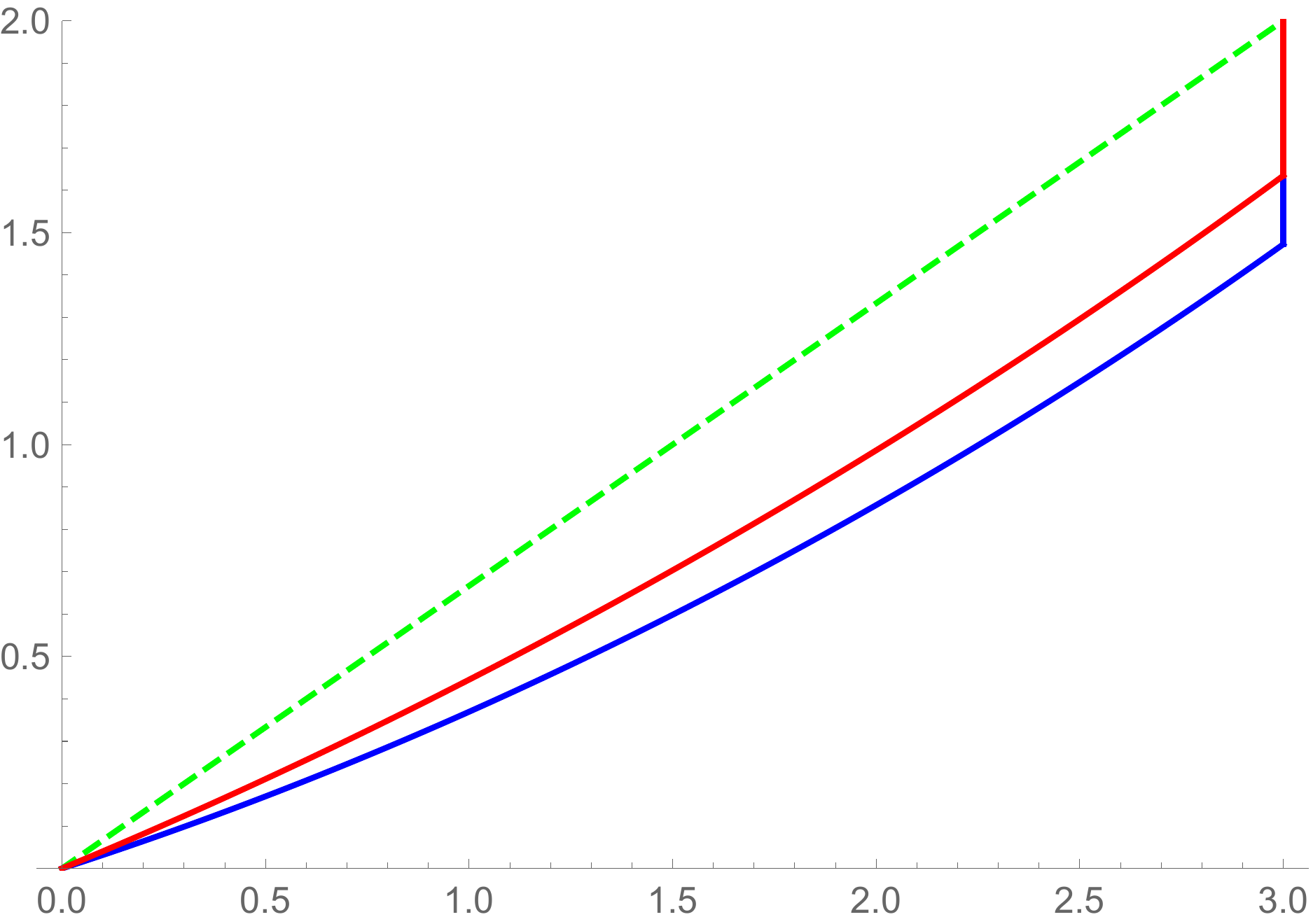} \includegraphics[scale=0.4]{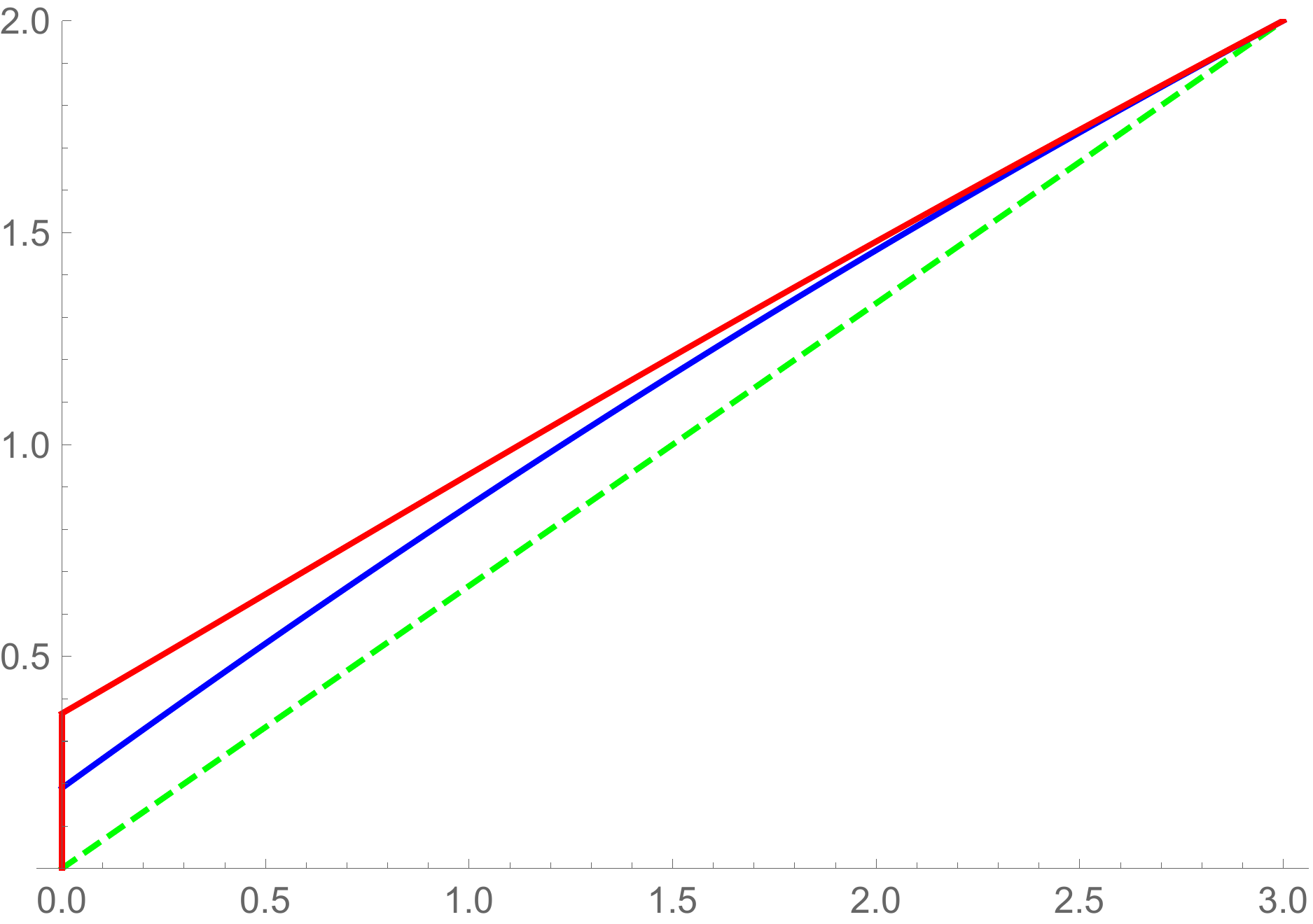}
\end{center}
 \caption{Numerical simulation of hypocycloidal solutions for $a=3$, $h=2$ and different values of $L$. 
Left:   $L=2$ (blue),  $L=2.3$ (red),  $L=3$ (green). Right: $L=3$ (green), $L=3.4$ (blue), $L=3.7$ (red).}
 \label{zzz}
  \end{figure}

Let us now discuss the non-uniqueness range of Theorem \ref{main1}. We have the following
\begin{theorem}\label{main3}
Let $h>a>0$ and $2L\in(a^2,2ah-a^2)$.  
Then $\gamma\in\mathcal A _{a,h,L}$  is solution to problem \eqref{problem1} if and only if $\gamma_2'(t)\ge 0$ for a.e. $t\in(0,1)$ and $\gamma_1'(t)=\gamma_2'(t)$ for a.e. $t$ in $\{\gamma_1'(t)>0\}$.

   The piecewise affine curve $\gamma^\circ$ connecting the points $(0,0)$, $(0,p)$, $(a,p+a)$ and $(a,h)$, where $p:=\tfrac L a -\tfrac a 2$, is a solution to problem \eqref{problem1}. \EEE
Moreover, $\gamma^\circ$ is the unique solution to problem \eqref{problem1}  among all curves $\gamma$ that further satisfy $\{\gamma_1'(t)>0\}=(t_1,t_2)$ (up to a $\mathcal L ^1$-negligible set) for some $0<t_1<t_2<1$.
\end{theorem}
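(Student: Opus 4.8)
\textbf{Proof strategy for Theorem \ref{main3}.}

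The plan is to work directly with the resistance functional, exploiting the homogeneity of the integrand. Observe that for $t\in[0,1]$ the integrand in \eqref{minresist} is $g(\gamma_2'/|\gamma_1'|)\cdot|\gamma_1'|$ only where $\gamma_1'>0$; more robustly, writing $\mathcal F(\gamma)=\int_0^1 \frac{(\gamma_2')_+^3}{(\gamma_1')^2+(\gamma_2')^2}\,dt$, I would first establish the pointwise inequality $\frac{(\gamma_2')_+^3}{(\gamma_1')^2+(\gamma_2')^2}\ge \tfrac12\big((\gamma_2')_+-|\gamma_1'|\big)$, with equality if and only if $\gamma_1'=\gamma_2'$ (when $\gamma_2'\ge0$) or $\gamma_1'=0$ and $\gamma_2'\le 0$. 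Indeed this reduces to checking $2s^3\ge (s-|r|)(r^2+s^2)$ for $s\ge 0$, i.e. $|r|(r^2+s^2)\ge s^3-2s^3+\dots$; a short algebraic manipulation shows the difference factors as a nonnegative multiple of $(|r|-s)^2$. Integrating, and using $\int_0^1(\gamma_2')_+\,dt\ge \int_0^1\gamma_2'\,dt=h$ and $\int_0^1|\gamma_1'|\,dt\ge\int_0^1\gamma_1'\,dt=a$ (the latter being an equality precisely when $\gamma_1'\ge0$ a.e., which holds on $\mathcal A_{a,h,L}$), gives $\mathcal F(\gamma)\ge\tfrac12(h-a)$ for every $\gamma\in\mathcal A_{a,h,L}$, with equality forcing $\gamma_2'\ge0$ a.e. and $\gamma_1'=\gamma_2'$ a.e. on $\{\gamma_1'>0\}$. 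This proves the ``only if'' direction of the characterization and, conversely, any $\gamma$ meeting these two conditions attains the bound, so the characterization is an equivalence.

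Next I would verify that the stated value $\tfrac12(h-a)$ is actually achieved inside $\mathcal A_{a,h,L}$ when $2L\in(a^2,2ah-a^2)$, by exhibiting $\gamma^\circ$. On the first segment $\gamma^\circ$ is vertical, so $\gamma_1'=0$ and $\gamma_2'\ge0$; on the middle segment $\gamma_1'=\gamma_2'>0$; on the last segment again $\gamma_1'=0$, $\gamma_2'\ge0$. Hence $\gamma^\circ$ satisfies both optimality conditions, provided it lies in $[0,a]\times[0,h]$, is simple, and satisfies the area constraint. The area constraint $\int_0^1\gamma_1^\circ(\gamma_2^\circ)'\,dt=ah-L$ is a direct computation: only the middle and last segments contribute (where $\gamma_1^\circ>0$), and one checks it equals $ah-L$ exactly when $p=\tfrac La-\tfrac a2$. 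The conditions $0<p$ and $p+a<h$ translate precisely into $a^2<2L$ and $2L<2ah-a^2$, which is the hypothesis; this also guarantees the curve stays in the rectangle and is simple. Thus $\gamma^\circ$ is a minimizer.

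Finally, for the uniqueness-up-to-support statement, suppose $\gamma$ is a solution with $\{\gamma_1'>0\}=(t_1,t_2)$ up to null sets. Then $\gamma_1$ is constant $=0$ on $[0,t_1]$ and constant $=a$ on $[t_2,1]$, so on $(t_1,t_2)$ we have $\gamma_1'=\gamma_2'$ (by the characterization) with $\int_{t_1}^{t_2}\gamma_1'\,dt=a$, hence $\int_{t_1}^{t_2}\gamma_2'\,dt=a$ as well; the remaining rise $h-a$ of $\gamma_2$ is distributed over $[0,t_1]\cup[t_2,1]$, where $\gamma_2'\ge 0$. The area constraint reads $\int_{t_1}^{t_2}\gamma_1\gamma_1'\,dt+a\int_{t_2}^1\gamma_2'\,dt=ah-L$; the first integral equals $\tfrac{a^2}{2}$, so $a\int_{t_2}^1\gamma_2'\,dt=ah-L-\tfrac{a^2}{2}$, i.e. $\gamma_2(t_2)=\gamma_2(1)-\big(h-L/a-a/2\big)=h-(h-p-a)=p+a$. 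Since $\gamma_2(t_1)=\gamma_2(t_2)-a=p$ and $\gamma_1(t)=\gamma_2(t)-p$ on $(t_1,t_2)$, the trace of $\gamma$ coincides with that of $\gamma^\circ$: a vertical segment from $(0,0)$ to $(0,p)$, the diagonal segment from $(0,p)$ to $(a,p+a)$, and a vertical segment from $(a,p+a)$ to $(a,h)$. Up to reparametrization this is $\gamma^\circ$, and since rectifiable simple curves are identified with their traces, uniqueness follows.

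\textbf{Main obstacle.} The crux is the pointwise inequality and its sharp equality case: getting the constant $\tfrac12$ right and identifying the full equality set (including the degenerate direction $\gamma_1'=0,\ \gamma_2'\le0$, which is what makes the vertical segments admissible at no energy cost) is where the argument lives. The rest is bookkeeping with the area constraint; the only mild subtlety there is checking simplicity and the rectangle containment, which follow from the strict inequalities $0<p<h-a$ equivalent to the hypothesis on $L$.
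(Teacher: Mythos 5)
Your overall architecture (a pointwise Young-type bound whose equality case yields the characterization, then the explicit competitor $\gamma^\circ$ and the area-constraint bookkeeping for uniqueness) is exactly the route the paper takes via Lemma \ref{younglemma}, and your second and third paragraphs are essentially correct. But the key step fails as written: the constant in your pointwise inequality is misplaced. The inequality $\frac{(\gamma_2')_+^3}{(\gamma_1')^2+(\gamma_2')^2}\ge\frac12\big((\gamma_2')_+-|\gamma_1'|\big)$ is true, but it is not sharp where you claim: at $\gamma_1'=\gamma_2'=s>0$ the left-hand side is $s/2$ while the right-hand side is $0$, so equality does \emph{not} hold there; and the difference $2s^3-(s-\rho)(\rho^2+s^2)=s^3+\rho s^2-s\rho^2+\rho^3$ equals $2\rho^3$ at $s=\rho$, so it cannot factor through $(|r|-s)^2$. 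Consequently the integrated bound $\mathcal F(\gamma)\ge\tfrac12(h-a)$ lies strictly below the actual minimum: a direct computation gives $\mathcal F(\gamma^\circ)=p+\tfrac a2+(h-p-a)=h-\tfrac a2>\tfrac12(h-a)$. Your lower bound is never attained, so the equality analysis on which the whole ``if and only if'' characterization rests is vacuous, and the claim that $\gamma^\circ$ achieves $\tfrac12(h-a)$ is false.

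The fix is to attach the factor $\tfrac12$ only to the $\gamma_1'$ term. For $\gamma_2'\ge0$ write
$\frac{(\gamma_2')^3}{(\gamma_1')^2+(\gamma_2')^2}=\gamma_2'-\frac{\gamma_1'\gamma_2'}{(\gamma_1')^2+(\gamma_2')^2}\,\gamma_1'\ge\gamma_2'-\tfrac12\gamma_1'$ by Young's inequality $2\alpha\beta\le\alpha^2+\beta^2$, with equality iff $\gamma_1'=0$ or $\gamma_1'=\gamma_2'$; for $\gamma_2'<0$ the left-hand side is $0>\gamma_2'-\tfrac12\gamma_1'$ (recall $\gamma_1'\ge0$ on $\mathcal A_{a,h,L}$). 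Integrating gives $\mathcal F(\gamma)\ge h-\tfrac a2$ with equality iff $\gamma_2'\ge0$ a.e.\ and $\gamma_1'=\gamma_2'$ a.e.\ on $\{\gamma_1'>0\}$, which is the stated characterization and is achieved by $\gamma^\circ$. This is exactly Lemma \ref{younglemma} of the paper; note that the paper obtains the necessity of $\gamma_2'\ge0$ for minimizers separately, through the nontrivial Lemma \ref{+class}, whereas the corrected pointwise inequality delivers it directly for this theorem. With this replacement, your verification of the area constraint for $\gamma^\circ$ and your uniqueness argument go through unchanged.
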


More piecewise affine solutions to problem \eqref{problem1} can be constructed as follows.
Let $ k\in\mathbb{N}$, $k\ge 5$. Let $(x_j,y_j)$ be points in $\{(x,y)\in[0,a]\times[0,h]: x\le y\le h-a+x\}$, such that $0=x_0\le x_1\le\ldots\le x_k=a$, $0=y_0<y_1<\ldots<y_k=h$,
and such that for any $j=1,\ldots k$  there holds either $x_j=x_{j-1}$  or $x_j-x_{j-1}=y_j-y_{j-1}$. We denote by $J_2(k)$ the set of indices in $\{1,\ldots k\}$ such that $x_j=x_{j-1}$ and by $J_1(k)$ its complement in $\{1,\ldots k\}$.
Let $\hat\gamma(t)=(x_{j-1},y_{j-1})+\tfrac{t-t_{j-1}}{t_j-t_{j-1}}(x_j-x_{j-1},y_j-y_{j-1})$ for $t\in[t_{j-1},t_j]$, $j=1,\ldots, k$.
Then the energy of $\hat\gamma$  can be computed as
\begin{equation}\label{piecewiseenergy}
\mathcal F(\hat\gamma)=\sum_{j=1}^k\int_{t_{j-1}}^{t_j}\frac{(\gamma_2')_+^3}{(\gamma_1')^2+(\gamma_2')^2}\,dt=\sum_{j\in J_1(k)}\frac{y_j-y_{j-1}}2+\sum_{j\in J_2(k)}(y_j-y_{j-1})=h-\frac a2,
\end{equation}
where we have exploited the fact that
$
\sum_{j\in J_1(k)}(y_j-y_{j-1})=a$ and $ \sum_{j\in J_2}(y_j-y_{j-1})=h-a$.
Hence, we see that any piecewise affine curve made by vertical segments and slope $1$ segments has the same energy of $\gamma^\circ$: it  is therefore solution to problem \eqref{problem1} as soon as the area constraint $\sum_{j\in J_1(k)}(y_j+y_{j-1})(x_j-x_{j-1})=2L$ is matched. See also Figure \ref{manysolutions}.

\begin{figure}[h!]
\begin{center}
\includegraphics[height=8cm, width=6cm]{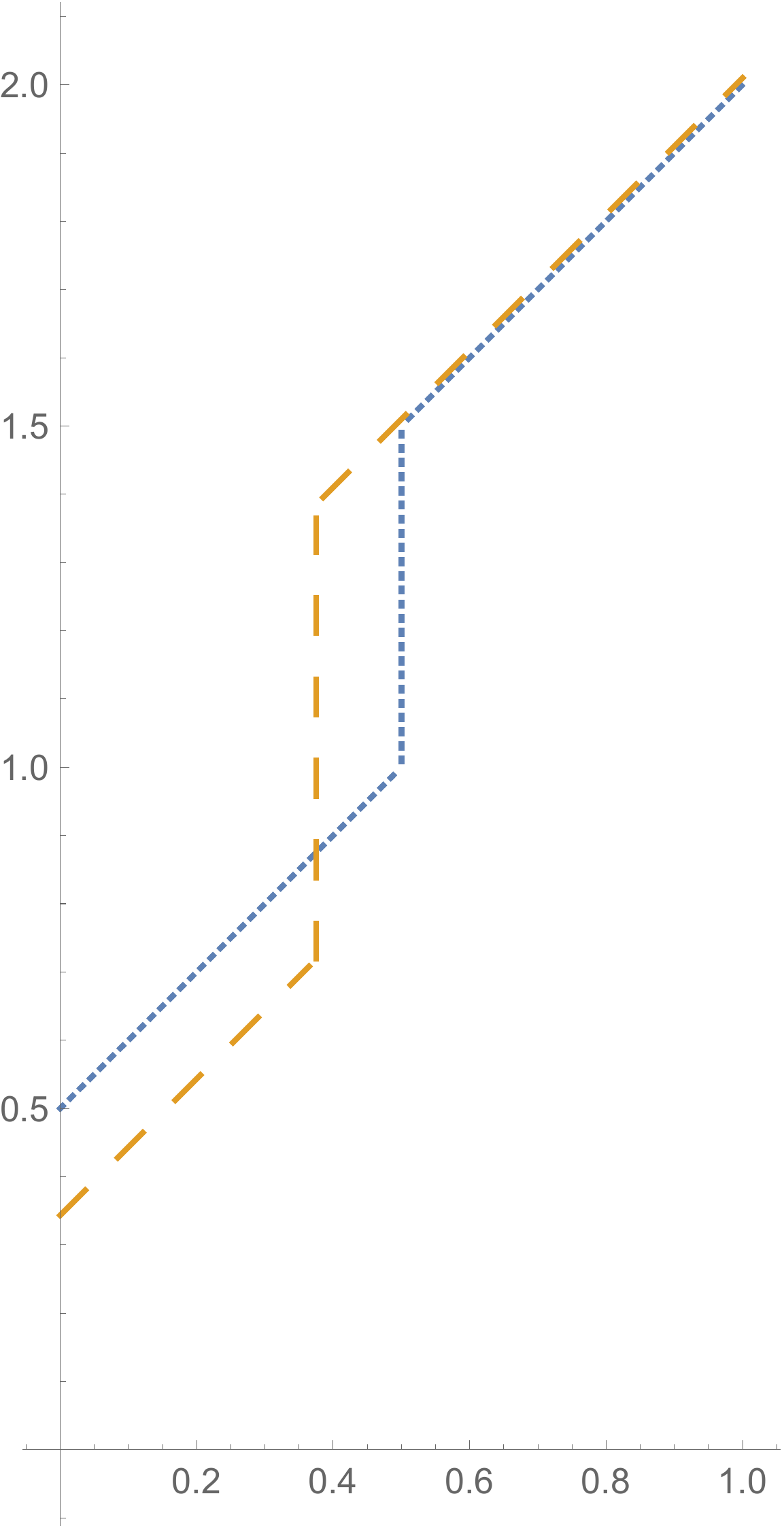}
\end{center}
 \caption{Two solutions with   $a=1$, $h=2$, $L=1.25$.}
 \label{manysolutions}
  \end{figure}

Understanding $L$ as a material design constraint, it is natural to look for its optimal value, in case there is some freedom in its choice. Letting $\mathcal F_{min}(a,h,L)$ be the minimal value corresponding to the solution of problem \eqref{problem1}, we have the following  result (see also Figure \ref{concavityfigure}).

\begin{theorem}\label{main4} 
The mapping $(0,ah)\ni L\mapsto\mathcal F_{min}(a,h,L)$ is continuous and symmetric around $L=ah/2$.
 If $h\le a$, then
it  is  strictly decreasing on $(0,ah/2]$, strictly increasing on $[ah/2,ah)$, and its range is $[\tfrac{h^3}{a^2+h^2},h)$.
Else if $h>a$, then 
it is strictly decreasing on $(0,a^2/2]$, constant on $[a^2/2, ah-a^2/2]$, strictly increasing on $[ah-a^2/2,ah)$, and its range is $[h-a/2,h)$.

\end{theorem}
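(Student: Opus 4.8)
The plan is to deduce every assertion from four facts: the reflection symmetry of \eqref{sterne}, an elementary universal lower bound $\mathcal F\ge h-\tfrac a2$, the explicit form of the minimizers furnished by Theorems \ref{explicit} and \ref{main3} together with \eqref{piecewiseenergy}, and the convexity of $L\mapsto\mathcal F_{min}(a,h,L)$. First I would note that the map $\gamma\mapsto\tilde\gamma$ of \eqref{sterne} --- equivalently, the central symmetry $(x,y)\mapsto(a-x,h-y)$ followed by the time reversal $t\mapsto 1-t$ --- is a bijection of $\mathcal A_{a,h,L}$ onto $\mathcal A_{a,h,ah-L}$ under which $\gamma_1'(t)$ becomes $\gamma_1'(1-t)\ge0$ and $\gamma_2'(t)$ becomes $\gamma_2'(1-t)$, so that $\mathcal F$ is preserved; hence $\mathcal F_{min}(a,h,L)=\mathcal F_{min}(a,h,ah-L)$ and it suffices to treat $L\in(0,ah/2]$. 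Since $g(z)\ge z-\tfrac12$ (this is $(z-1)^2\ge0$) and $g$ vanishes on $(-\infty,0]$, the integrand of $\mathcal F$ dominates $(\gamma_2')_+-\tfrac12\gamma_1'$ pointwise, whence $\mathcal F(\gamma)\ge\int_0^1(\gamma_2')_+\,dt-\tfrac12\int_0^1\gamma_1'\,dt\ge h-\tfrac a2$ for every admissible $\gamma$ and all parameters; moreover equality forces $\gamma_2'\ge0$ and $\gamma_1'\in\{0,\gamma_2'\}$ a.e., i.e.\ exactly the curves characterised in Theorem \ref{main3}, all of which have energy $h-\tfrac a2$ by \eqref{piecewiseenergy} and which can be realised in $\mathcal A_{a,h,L}$ precisely when $2L\in[a^2,2ah-a^2]$.

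On the complementary range $2L<(ah)\wedge a^2$, Theorem \ref{explicit} represents the minimizer as the graph of a convex function $\phi$ running from $(0,0)$ to $(a,h_*)$ with $h_*<h$, followed by the vertical segment to $(a,h)$, so that $\mathcal F_{min}(a,h,L)=\int_0^a g(\phi'(x))\,dx+(h-h_*)$; and by \eqref{parametrbis} the slope $\phi'$ of the arc takes values in $[\xi_*,\eta_*]\subseteq[0,1]$. Applying $g(z)\ge z-\tfrac12$ to the arc gives $\int_0^a g(\phi')\,dx\ge h_*-\tfrac a2$ with equality only if $\phi'\equiv1$, forcing $\phi(x)=x$, $h_*=a$, $L=a^2/2$; hence $\mathcal F_{min}(a,h,L)>h-\tfrac a2$ whenever $2L<a^2$. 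When in addition $h\le a$ one has $h_*<h\le a$, so $h_*/a<1$, and since $g$ is convex on $[0,1]$ Jensen's inequality gives $\int_0^a g(\phi')\,dx\ge a\,g(h_*/a)=h_*^3/(a^2+h_*^2)$, hence $\mathcal F_{min}(a,h,L)\ge h-a^2h_*/(a^2+h_*^2)$; the right-hand side is strictly decreasing in $h_*\in[0,a]$ with value $h^3/(a^2+h^2)$ at $h_*=h$, so $\mathcal F_{min}(a,h,L)>h^3/(a^2+h^2)$ since $h_*<h$. Evaluating the explicit boundary solutions of Theorem \ref{explicit} at $2L=(ah)\wedge a^2$ I would record $\mathcal F_{min}(a,h,ah/2)=h^3/(a^2+h^2)$ for $h\le a$ and $\mathcal F_{min}(a,h,a^2/2)=h-\tfrac a2$ for $h>a$, which combined with the universal bound shows $\mathcal F_{min}(a,h,\cdot)\equiv h-\tfrac a2$ on $[a^2/2,ah-a^2/2]$ when $h>a$.

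To turn these inequalities into the stated monotonicity I would prove that $L\mapsto\mathcal F_{min}(a,h,L)$ is convex on $(0,ah)$: this follows from the relaxed formulation underlying the proof of Theorem \ref{main1}, whose value depends on $L$ only through the affine area constraint $\int_0^a u=L$ (alternatively it may be checked from the explicit parametrization); in particular $\mathcal F_{min}(a,h,\cdot)$ is continuous on $(0,ah)$. As for the behaviour at the endpoints, the competitor given by the segment from $(0,0)$ to $(a,2L/a)$ followed by the vertical segment up to $(a,h)$ (and its reflection for $L\ge ah/2$) shows $\mathcal F_{min}(a,h,L)<h$ for every $L$, while using $\mathcal F_{min}(a,h,L)=\int_0^a g(\phi')+(h-h_*)$ and the convexity of $\phi$ to localise all the growth of $\phi$ near $x=a$ as $L=\int_0^a\phi\to0$ gives $\mathcal F_{min}(a,h,L)\to h$ as $L\to0^+$, and hence as $L\to ah^-$ by symmetry.

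It then remains to assemble: $L\mapsto\mathcal F_{min}(a,h,L)$ is continuous, convex, symmetric about $ah/2$, with supremum $h$ never attained, and its minimum is attained only at $L=ah/2$ with value $h^3/(a^2+h^2)$ when $h\le a$, or exactly on $[a^2/2,ah-a^2/2]$ with value $h-\tfrac a2$ when $h>a$; a convex function with these properties is necessarily strictly decreasing on the left piece and strictly increasing on the right piece (if it were constant on a subinterval lying strictly to one side of its minimum set, its slope would fail to be monotone), which yields the claimed monotonicity, and via the intermediate value theorem the ranges $[h^3/(a^2+h^2),h)$ and $[h-\tfrac a2,h)$. I expect the crux to be twofold and intertwined: making the passage to the relaxed problem precise enough that convexity --- hence continuity --- of the value function becomes manifest (or, if one avoids relaxation, establishing lower semicontinuity of $\mathcal F$ along sequences of minimizers via a Helly-type compactness for the convex arcs together with Fatou's lemma), and the strict monotonicity, which is exactly the assertion that the lower bounds above are saturated only on the flat set and which rests on the equality analysis of $g(z)\ge z-\tfrac12$ and of Jensen's inequality.
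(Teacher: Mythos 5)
Your argument is correct, but it reaches the two substantive conclusions --- continuity and strict monotonicity --- by a genuinely different route from the paper. The paper proves continuity by studying the stability in $L$ of the minimizers $(\xi_j,\eta_j)$ of $\Psi$ on the constraint sets $\mathcal T_j$, using the sign of the partial derivatives of $\Phi$ (Lemma \ref{aid}) to perturb the area constraint, and proves strict monotonicity by a direct competitor: interpolating the convex minimizer by a piecewise affine chordal curve strictly increases the subtended area while strictly decreasing the energy by Jensen. You instead observe that $\mathcal F_{min}(a,h,\cdot)$ is \emph{convex}, because by Lemma \ref{FGJ} it equals the value of the convex problem $\min\overline{\mathcal J}_+$ (convexity of $\overline{\mathcal J}_+$ on $\bigcup_L\mathcal C^+_{a,h,L}$ follows from Lemma \ref{representationlemma} and the linearity of $u\mapsto\dot u$) subject to the affine constraint $\int_0^a u=L$, so convex combinations of minimizers for $L_0,L_1$ are competitors for intermediate $L$; continuity is then automatic on the open interval, and strict monotonicity off the minimum set follows once that set is identified exactly, which your equality analysis in $g(z)\ge z-\tfrac12$ (for the flat part $[a^2/2,ah-a^2/2]$ when $h>a$) and in Jensen's inequality (for the single point $ah/2$ when $h\le a$) accomplishes. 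This buys a shorter and more transparent argument that bypasses the $\Psi$--$\Phi$ computations entirely, at the price of leaning explicitly on the relaxation machinery; the paper's route, by contrast, yields the continuity of the optimal pair $(\xi_*,\eta_*)$ in $L$ as a by-product, which is of independent interest for the parametrization of Theorem \ref{mainbis}. Two places where you should add detail if you write this up: the convexity of the value function deserves the two-line verification above rather than an appeal to ``the relaxed formulation'', and the limit $\mathcal F_{min}\to h$ as $L\to0^+$ needs the quantitative step $\int_0^{a-\delta}\phi'\le L/\delta$ (from $\int_0^a\phi'(s)(a-s)\,ds=L$) combined with $\int_{a-\delta}^a\phi'/(1+\phi'^2)\le\delta/2$, which makes ``localise the growth near $x=a$'' precise.
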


\begin{figure}[h!]
\begin{center}
\includegraphics[scale=0.4]{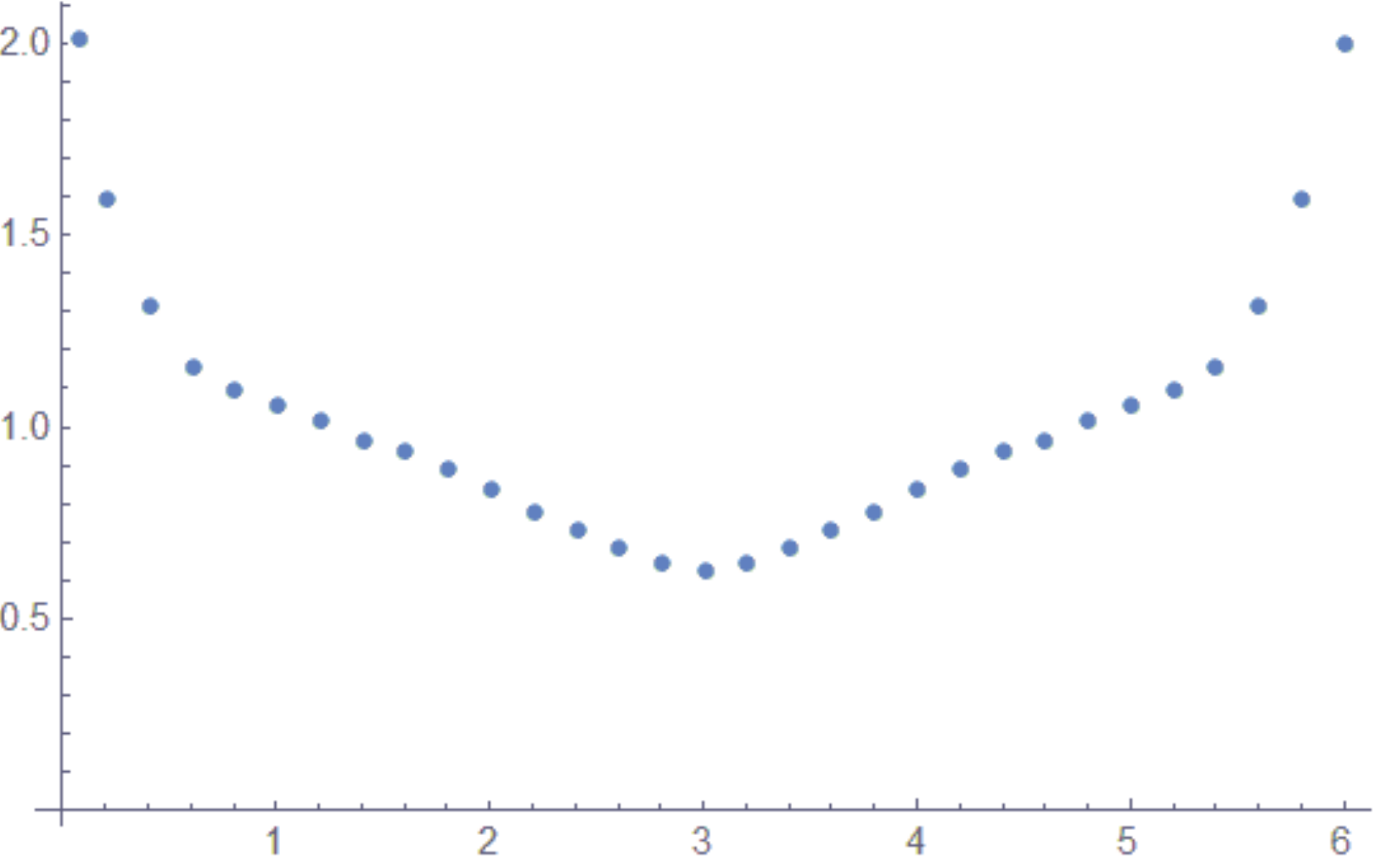}\hspace{0.31cm}
\includegraphics[scale=1.0]{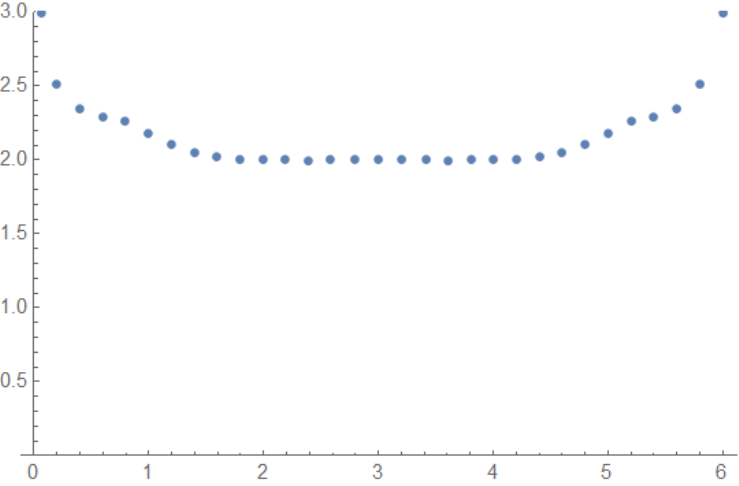}
\end{center}
 \caption{Numerical simulation of optimal energy values as a function of $L$, 
with the choice of parameters $a=3$, $h=2$ (left) and $a=2$, $h=3$ (right).}
\label{concavityfigure}
  \end{figure}

Let us conclude by remarking that the maximization problem is  easier. Indeed, we have $\sup\{\mathcal F(\gamma):\gamma\in\mathcal A_{a,h,L}\}=+\infty$. For instance, if $a=h=\tfrac12$ and $L=\tfrac18$, this can be seen by taking the sequence of curves $\bar\gamma^n(t):=(y_n(t),x_n(t))$, $t\in[0,1]$, where $x_n$ and $y_n$ are defined in \eqref{xy}. Again, the same behavior is clearly possible for any $a>0$, $h>0$, $L\in(0,ah)$. On the other hand, if we maximize $\mathcal F$ over $\mathcal A_{a,h,L}$ with the further constraint $\gamma_2'(t)\ge 0$ for a.e. $t\in(0,1)$, we may consider the estimate
\[
\mathcal F(\gamma)=\int_0^1\gamma_2'(t)-\frac{\gamma_2'(t)\,\gamma_1'(t)^2}{\gamma_1'(t)^2+\gamma_2'(t)^2}\,dt\le\int_0^1\gamma_2'(t)\,dt= h
\]
where equality holds if and only if $\gamma_1'(t)\wedge\gamma_2'(t)=0$ for a.e. $t\in (0,1)$. Hence, for any $a>0$, $h>0$ and $L\in(0,ah)$, the problem $$\max\{\mathcal F(\gamma):\gamma\in\mathcal A_{a,h,L},\,\gamma_2'(t)\ge 0 \mbox{ for a.e.  $t\in(0,1)$}\}$$  has infinitely many solutions. Any piecewise affine curve made by alternating horizontal ad vertical segments is indeed a solution as soon as the area constraint is matched, as it realizes the maximal value $h$. Such  construction is analogous to the one of piecewise affine minimizers in the nonuniqueness regime from Theorem \ref{main3}. However, these piecewise affine maximizers are found for any value of $a>0$, $h>0$ and $L\in(0,ah)$. 

\EEE

\subsubsection*{Plan of the paper} Section \ref{sectionF} provides some basic properties of functional $\mathcal{F}$. In Section \ref{BVrelaxhotel} we introduce the relaxed functional and we analyze the associated minimization problem. Section \ref{proofs} delivers the proof of the main results. 

\subsubsection*{Notation} Through the rest of the paper, without further explicit mention, it is always understood that the parameters are in the range $a>0$, $h>0$  and $L\in(0,ah)$.

\EEE

\section{Some properties of functional $\mathcal{F}$}\label{sectionF}

Let us start with a very simple estimate.
\begin{lemma}\label{<h}
There holds $$\inf\{\mathcal{F}(\gamma): \gamma\in\mathcal{A}_{a,h,L}\}<h.$$
\end{lemma}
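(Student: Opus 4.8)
The plan is to produce, for the given $L\in(0,ah)$, one explicit admissible piecewise affine curve $\gamma$ with $\mathcal F(\gamma)<h$. The governing observation is the elementary evaluation of $\mathcal F$ on an affine piece: if $\gamma'\equiv(p,q)$ is constant with $q\ge 0$ along a piece, then, since the integrand in \eqref{minresist} is constant there, that piece contributes $\tfrac{q^{2}}{p^{2}+q^{2}}\,\Delta y$ to $\mathcal F$, where $\Delta y$ is the vertical rise along the piece. Thus a horizontal piece costs $0$, a vertical piece costs exactly $\Delta y$, and a piece with finite positive slope costs strictly less than $\Delta y$. Consequently any profile joining $(0,0)$ to $(a,h)$ that uses at least one segment of finite positive slope has energy strictly below the trivial bound $h=\int_{0}^{1}\gamma_{2}'\,dt$; the only issue is to still match the (linear) area constraint $\int_{0}^{1}\gamma_{1}\gamma_{2}'\,dt=ah-L$ while keeping $\gamma$ inside $[0,a]\times[0,h]$, simple, with $\gamma_{1}'\ge 0$ and $|\gamma'|\neq 0$ a.e.

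Concretely, suppose first $2L\le ah$ and let $\gamma$ be the concatenation of the horizontal segment from $(0,0)$ to $(b,0)$, where $b:=a-\tfrac{2L}{h}$, followed by the straight segment from $(b,0)$ to $(a,h)$. Since $L>0$ and $2L\le ah$ we have $b\in[0,a)$, so $\gamma$ is a simple curve contained in the rectangle with $\gamma_{1}'\ge0$ and $|\gamma'|\neq0$ a.e.; a one-line computation gives $\int_{0}^{1}\gamma_{1}\gamma_{2}'\,dt=\tfrac{h(a+b)}{2}=ah-L$, so $\gamma\in\mathcal A_{a,h,L}$. By the observation above, $\mathcal F(\gamma)=\tfrac{h^{3}}{(a-b)^{2}+h^{2}}=\tfrac{h^{3}}{(2L/h)^{2}+h^{2}}<h$ because $L>0$.

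For the complementary case $2L\ge ah$ I would argue symmetrically, either invoking the reflection $\gamma(t)\mapsto(a,h)-\gamma(1-t)$, which maps $\mathcal A_{a,h,L}$ onto $\mathcal A_{a,h,ah-L}$ and leaves $\mathcal F$ invariant (so it reduces to the previous case), or directly taking the concatenation of the vertical segment from $(0,0)$ to $(0,c)$, with $c:=\tfrac{2L}{a}-h\in[0,h)$, followed by the straight segment from $(0,c)$ to $(a,h)$; the same bookkeeping yields $\int_{0}^{1}\gamma_{1}\gamma_{2}'\,dt=\tfrac{a(h-c)}{2}=ah-L$ and $\mathcal F(\gamma)=c+\tfrac{(h-c)^{3}}{a^{2}+(h-c)^{2}}=h-\tfrac{a^{2}(h-c)}{a^{2}+(h-c)^{2}}<h$ since $c<h$. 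In all cases $\inf\{\mathcal F(\gamma):\gamma\in\mathcal A_{a,h,L}\}<h$, which is the claim.

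There is no real obstacle in this argument; the only two points deserving care are: (i) checking that the parameter produced by solving the single linear area equation ($b$, resp. $c$) lands in its feasible interval — this is exactly where the hypothesis $L\in(0,ah)$ enters, and the two constructions overlap consistently at $2L=ah$, where $\gamma$ degenerates to the segment from $(0,0)$ to $(a,h)$; and (ii) confirming the closed formula for $\mathcal F$ on affine pieces, which is immediate from \eqref{minresist}. I would spell out only these two short computations and leave the rest to inspection.
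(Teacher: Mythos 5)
Your proposal is correct and follows essentially the same route as the paper: both test $\mathcal F$ on an explicit two-piece affine competitor (a vertical segment of height $c=\tfrac{2L}{a}-h$ followed by the chord to $(a,h)$ when $2L\ge ah$, which is exactly the paper's curve $\gamma^{r}$ with $ar=2L-ah$, and the reflected horizontal-then-chord construction in the other case), and the closed formula $\tfrac{q^{2}}{p^{2}+q^{2}}\,\Delta y$ for the cost of an affine piece gives the strict bound below $h$. No gaps.
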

\begin{proof}
Let us suppose that $2L\ge ah$ (the other case is analogous). It is enough to test the functional on the following curve made by two segments
\[
\gamma^{r}(t)=\left\{\begin{array}{ll}
(0,2tr)\quad&\mbox{ if $t\in[0,1/2]$}\\
(0,r)+(2t-1)(a,h-r)\quad&\mbox{ if $t\in[1/2,1]$},
\end{array}\right.
\]
where $r\in[0,h]$ is a parameter. Note that $\gamma^r\in\mathcal{A}_{a,h,L}$ if and only if $ar=2L-ah$. A direct computation shows that
\[
\mathcal{F}(\gamma^r)=r+\frac{(h-r)^3}{a^2+(h-r)^2}.
\]
The function $[0,h]\ni r\mapsto \mathcal{F}(\gamma^r)$ is strictly decreasing on $[0,r_*]$ and strictly increasing on $[r_*,h]$, where $r_*:=(h-a)_+$,  as easily checked. Moreover, $\mathcal{F}(\gamma^0)=\tfrac{h^3}{a^2+h^2}<h=\mathcal{F}(\gamma^h)$. In particular, such function is uniquely maximized for $r=h$ with value $h$. The result is proved.
\end{proof}
\begin{remark}\label{segment}\rm
Let $\gamma\in\mathcal{A}_{a,h,L}$. We note that if $\gamma_1(t_1)=\gamma_1(t_2)$ and $\gamma_2(t_2)-\gamma_2(t_1)=h$ for some $0\le t_1<t_2\le 1$, then $\mathcal{F}(\gamma)\ge h$. 
This happens in particular if $(0,h)\in \gamma([0,1])$ or $(a,0)\in\gamma([0,1])$.
Indeed, it is enough to compute the contribution to the functional coming from the interval $[t_1,t_2]$ where $\gamma$ is a vertical segment, which is exactly $h$. 
\end{remark}

We will often make use of approximations by means of piecewise affine curves. Here, we provide the approximation construction.

\begin{lemma} \label{infcurve}
For any $\epsilon>0$ and any $\gamma\in\mathcal{A}_{a,h,L}$,
 there exists $\bar\gamma\in\mathcal{A}_{a,h,L}$ such that 
\begin{itemize}
\item[i)] $\bar\gamma$ is piecewise affine
\item[ii)] $\bar\gamma_1'(t)>0$ for a.e. $t\in(0,1)$
\item[iii)] $\bar\gamma_2'(t)\ge0$ for a.e. $t\in(0,1)$ if the same holds for $\gamma$.
\item[iv)] $|\mathcal{F}(\bar\gamma)-\mathcal{F}(\gamma)|<\epsilon$
\item[v)] $\displaystyle\sup_{t\in[0,1]} |\bar\gamma(t)-\gamma(t)|<\epsilon$.
\end{itemize}
In particular,
there holds 
\begin{equation*}\begin{aligned}
& \displaystyle\inf\left\{\mathcal F(\gamma):\:\gamma\in\mathcal{A}_{a,h,L} \right\}
\\&\qquad
=\displaystyle\inf\left\{\mathcal F(\gamma):\:\gamma\in\mathcal{A}_{a,h,L},\,\gamma_1'(t)>0 \mbox{ for a.e. $t\in(0,1)$},\, \gamma\ \hbox{piecewise affine} \right\}.
\end{aligned}
\end{equation*}
\end{lemma}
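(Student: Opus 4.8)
The plan is to construct, for a given $\gamma\in\mathcal{A}_{a,h,L}$ and $\epsilon>0$, a piecewise affine competitor $\bar\gamma$ in three stages: first a coarse piecewise affine interpolation, then a small vertical-slope perturbation to enforce $\bar\gamma_1'>0$ a.e., and finally a small affine rescaling to restore the exact area constraint. First I would exploit that $\gamma_1$ is nondecreasing (since $\gamma\in\mathcal{A}_{a,h,L}$) and absolutely continuous, hence uniformly continuous, and similarly control $\gamma_2$. Partitioning $[0,1]$ into $N$ equal subintervals and letting $\gamma^N$ be the continuous piecewise affine curve agreeing with $\gamma$ at the nodes $t_j=j/N$, one has $\sup_t|\gamma^N(t)-\gamma(t)|\to 0$; moreover $(\gamma^N)'$ is a conditional expectation (nodal average) of $\gamma'$, so $(\gamma^N)'\to\gamma'$ in $L^1(0,1;\R^2)$ along a subsequence a.e., and in particular $(\gamma^N)'_1\ge 0$, and $(\gamma^N)'_2\ge 0$ whenever $\gamma_2'\ge 0$. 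Since $\gamma^N$ stays in $[0,a]\times[0,h]$ (it interpolates points of the rectangle, and $\gamma_1,\gamma_2$ are monotone resp.\ controlled) and $\gamma^N(0)=(0,0)$, $\gamma^N(1)=(a,h)$, the only things potentially lost are the strict positivity $\bar\gamma_1'>0$, the simplicity, and the area constraint.

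Next I would record the continuity/convergence of $\mathcal F$ along this approximation. The integrand $g$ from \eqref{gi} is continuous with $0\le g(z)\le z_+\le|z|$ and is $1$-Lipschitz-type controlled, so $\mathcal F(\eta)=\int_0^1 g\!\left(\eta_2'/\eta_1'\right)\eta_1'\,dt$ can be written as $\int_0^1 \tilde g(\eta_1',\eta_2')\,dt$ with $\tilde g(p,q)=q_+^3/(p^2+q^2)$ positively $1$-homogeneous and continuous off the origin, bounded by $|q|$. From $(\gamma^N)'\to\gamma'$ in $L^1$ and a.e., together with the uniform bound $\tilde g(\eta_1',\eta_2')\le|\eta_2'|$ and an application of the generalized dominated convergence theorem (dominating by $|(\gamma^N)_2'|$, which converges in $L^1$), one gets $\mathcal F(\gamma^N)\to\mathcal F(\gamma)$. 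To fix $\bar\gamma_1'>0$, I would replace any maximal subinterval on which $(\gamma^N)'_1=0$ (a vertical segment from $(x_0,y_1)$ to $(x_0,y_2)$) by two affine pieces forming a narrow ``spike'': go from $(x_0,y_1)$ to $(x_0+\delta,\tfrac{y_1+y_2}2)$ and then to $(x_0,y_2)$ — but since we need $\bar\gamma_1'\ge 0$ eventually for class $\mathcal{A}_{a,h,L}$, I would instead slant: replace it by the single segment from $(x_0,y_1)$ to $(x_0+\delta,y_2)$, absorbing the horizontal displacement $\delta$ by correspondingly shrinking a neighboring strictly-increasing piece; choosing $\delta$ small and doing this on each of the finitely many vertical pieces changes $\mathcal F$ by $O(\delta)$ (since $\tilde g$ is continuous and the perturbed slopes stay in a compact set away from the origin) and keeps the curve in the rectangle and simple. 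This also preserves $\bar\gamma_2'\ge 0$ if $\gamma_2'\ge 0$, since we only changed $\bar\gamma_1'$.

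Finally, to restore the exact area $\int_0^1\bar\gamma_1\bar\gamma_2'\,dt=ah-L$, I would use a one-parameter family of small modifications whose area varies continuously and surjectively onto an interval around $ah-L$: for instance, near a node where the curve is not at $x=0$ or $x=a$, insert a tiny triangular detour of signed area $s$, $|s|$ small, parametrized affinely, which changes the enclosed area by exactly $s+o(s)$ while changing $\mathcal F$ by $O(|s|)$ and keeping piecewise affineness, the endpoint conditions, the rectangle confinement, $\bar\gamma_1'\ge 0$ (tilt the detour slightly forward), and the sign of $\bar\gamma_2'$ if needed; an intermediate value argument then picks $s$ giving the exact constraint. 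Taking $N$ large and $\delta,|s|$ small yields all of i)–v). The resulting identity of infima is then immediate: ``$\le$'' from the more restrictive class, ``$\ge$'' because every $\gamma\in\mathcal{A}_{a,h,L}$ is approximated in energy by such piecewise affine curves. The main obstacle I anticipate is the simultaneous bookkeeping in the last step: each of the two corrections ($\delta$ for strict positivity, $s$ for the area) perturbs both the area and the energy, so I would first fix $N$, then choose $\delta$ small enough that the area error and energy error are below $\epsilon/2$, and only then solve for $s=s(\delta,N)\to 0$ to correct the residual area — making sure the detour's width can be taken $o(1)$ as the residual $\to 0$ so that it does not reintroduce a non-small energy change or break simplicity.
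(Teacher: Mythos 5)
Your proposal is correct and follows essentially the same three-stage strategy as the paper's proof: piecewise affine interpolation with $L^1$ control of the derivatives (hence of $\mathcal F$, which the paper gets via the Lipschitz constant of $(x,y)\mapsto x_+^3/(x^2+y^2)$ rather than generalized dominated convergence), a small horizontal displacement of the finitely many vertical segments to enforce $\bar\gamma_1'>0$, and a one-parameter perturbation plus the intermediate value theorem to restore the exact area. The only notable implementation difference is in the area-fixing step: the paper rescales all nodal $y$-values globally via $(1-|\sigma|)\hat\gamma_2(t_i)+\sigma_+h$, which makes confinement to $[0,h]$ and preservation of $\gamma_2'\ge0$ automatic, whereas your local ``triangular detour'' needs a node with room to move vertically so as not to reverse an adjacent slope or leave the rectangle --- such a node always exists since $h>0$, but this deserves a line.
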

\begin{proof}
	

\textbf{Step 1.}
We approximate any $\gamma\in\mathcal{A}_{a,h,L}$  with  a piecewise affine $\breve\gamma$  with nodes on the curve $\gamma$, such that $\breve\gamma(0)=(0,0)$ and $\breve\gamma(1)=(a,h)$.  This entails strong $W^{1,1}(0,1)$ (hence uniform) approximation of both $\gamma_1$ and $\gamma_2$. 
In particular, for any $\delta>0$, $\breve\gamma$ can be chosen such that
\begin{equation}\label{areaerror}
\left|\int_0^1\breve\gamma_1(t)\,\breve\gamma_2'(t)\,dt -(ah-L)\right|=\left|\int_0^1\breve\gamma_1'(t)\,\breve\gamma_2(t)\,dt-L\right|<\delta/2,
\end{equation}
and \begin{equation}\label{gammaenne} \sup_{t\in[0,1]}|\breve\gamma(t)-\gamma(t)|<\delta/2,\qquad
|\mathcal{F}(\gamma)-\mathcal{F}(\breve\gamma)|\le C\int_0^1 |\gamma'(t)-\breve\gamma'(t)|\,dt<\delta/2,
\end{equation}
where $C=3\sqrt{3}/4$ is the Lipschitz constant of the map $\mathbb R^2\ni(x,y)\mapsto\frac{x^3}{x^2+y^2}$. 

Let $0=t_0<t_1<\ldots<t_n=1$ be the partition of $[0,1]$ such that $\gamma(t_i)$, $i=1,\ldots, n-1$ are the  nodes of $\breve\gamma$. We mention that since $ah>L>0$, if the partition is fine enough there are always grid points $t_i$, $i=1,\ldots, n-1$, such that $0<\gamma_2(t_i)<h$.  Let $I\subset\{1,\ldots,n\}$ denote the subset of indices such that $\breve\gamma_1'(t)=0$ on $(t_{i-1},t_i)$ if $i\in I$ and $\breve\gamma_1'(t)\neq 0$ on $(t_{i-1},t_i)$ otherwise. We assume wlog that $I$ does not contain two consecutive integers. We introduce the piecewise affine curve $\hat\gamma$, such that $\hat\gamma(0)=(0,0)$ and $\hat\gamma(1)=(a,h)$, whose  nodes are found at the points
\[\begin{aligned}
&\gamma(t_i)\quad\mbox{ for $i\in\{1,\ldots, n-2\}\setminus I$  (and also for $i=n-1$ if $n\notin I$)},\\& \gamma(t_i)+(2^{-n-2}(C\vee h)^{-1}\delta,0)\quad\mbox{ for  $i\in\{1,\ldots, n-1\}\cap I$},\\& \gamma(t_{n-1})-(2^{-n-2}(C\vee h)^{-1}\delta,0)\quad \mbox{ if  $n\in I$}.
\end{aligned}\] 
For small enough $\delta$ the trace of $\hat\gamma$ is still contained in $[0,a]\times[0,h]$ and there holds $\hat\gamma_1'(t)>0$ for a.e. $t\in(0,1)$.
Clearly, if $\gamma_2'(t)\ge 0$ for a.e. $t\in(0,1)$, then $\hat\gamma$ and $\breve\gamma$ enjoy this same property. 
 It is readily seen that $\sup_{t\in[0,1]}|\breve\gamma(t)-\hat\gamma(t)|\le \delta /2$, and by computing the sums of trapezoidal areas we get
\[\begin{aligned}
& \left|\int_0^1\hat\gamma_1'(t)\,\hat\gamma_2(t)\,dt-\int_0^1\breve\gamma_1'(t)\,\breve\gamma_2(t)\,dt\right|\\
&\qquad=\frac12\left|\sum_{i=1}^n (\breve\gamma_2(t_{i-1})+\breve\gamma_2(t_i))(\breve\gamma_1(t_i)-\breve\gamma_1(t_{i-1})-\hat\gamma_1(t_i)+\hat\gamma_1(t_{i-1}))\right|\\
&\qquad\le h\sum_{i=1}^n|\breve\gamma_1(t_i)-\breve\gamma_1(t_{i-1})-\hat\gamma_1(t_i)+\hat\gamma_1(t_{i-1})|\le\delta/2.
\end{aligned}
\]
Moreover,
\[
|\mathcal{F}(\hat\gamma)-\mathcal{F}(\breve\gamma)|\le C\int_0^1|\hat\gamma'(t)-\breve\gamma'(t)|\,dt\le C\sum_{i=1}^n\int_{t_{i-1}}^{t_i}\frac{\delta 2^{-n}}{t_i-t_{i-1}}\,dt\le \delta/2.
\]
By combining the latter estimates with \eqref{areaerror} and \eqref{gammaenne}, we find
\begin{equation}\label{pmeps}
\sup_{t\in[0,1]}|\hat\gamma(t)-\gamma(t)|<\delta,\quad
\left|\int_0^1\hat\gamma_1'(t)\,\hat\gamma_2(t)\,dt-L\right|<\delta,\quad 
|\mathcal{F}(\hat\gamma)-\mathcal{F}(\gamma)|<\delta.
\end{equation}
Therefore, by taking $\delta$ small enough we see that $\hat\gamma$ satisfies properties {\it i)} to {\it v)}. Still, it does not necessarily belong to $\mathcal{A}_{a,h,L}$.

\textbf{Step 2.} In view of the previous step, we need to modify $\hat\gamma$ in order to match the area constraint.
A parametrization for $\hat\gamma$ is
\begin{equation}\label{vecchia}
\hat\gamma(t)=\hat\gamma(t_{i-1})+\frac{t-t_{i-1}}{t_i-t_{i-1}}\,(\hat\gamma(t_i)-\hat\gamma(t_{i-1}))\quad\mbox{ if $t\in[t_{i-1},t_i]$},\qquad\,i=1,\ldots,n.
\end{equation}
 Let $\sigma\in[-1,1]$. We define a new piecewise affine curve depending on $\sigma$. Let $\gamma_\sigma(t_0)=(0,0)$, $\gamma_\sigma(t_n)=(a,h)$, and  let  $\gamma_\sigma(t_i)=(\hat\gamma_1(t_i), (1-|\sigma|)\hat\gamma_2(t_i)+\sigma_+h)$, $i=1,\ldots n-1$. 
   Accordingly, let
 \begin{equation}\label{nuova}
 \gamma_\sigma(t)=\gamma_\sigma(t_{i-1})+\frac{t-t_{i-1}}{t_i-t_{i-1}}\,(\gamma_\sigma(t_i)-\gamma_\sigma(t_{i-1}))\quad\mbox{ if $t\in[t_{i-1},t_i]$},\qquad\,i=1,\ldots,n.
 \end{equation}
  The  area in $[0,a]\times[0,h]$ that lies below the  curve $\gamma_\sigma$  is once more easily computed as sum of trapezoidal areas and there holds
 \begin{equation}\label{iddelta}
 \mathcal{I}(\sigma):=\int_0^1 (\gamma_\sigma)_1'(\gamma_\sigma)_2=\sigma_+ ah+(1-|\sigma|)\int_0^1\hat\gamma_1'\hat\gamma_2.
 \end{equation}
 Since $\int_0^1\hat\gamma_1'\hat\gamma_2<ah$, we see from \eqref{iddelta} that the map $[-1,1]\ni\sigma\mapsto\mathcal{I}(\sigma)$ is continuous strictly increasing. Moreover, it is readily seen using the second estimate in \eqref{pmeps} and \eqref{iddelta} that $\mathcal{I}(\tfrac{2\delta}{ah-L+\delta})>L+\delta$ and that $\mathcal{I}(-\tfrac{2\delta}{L+\delta})<L-\delta$. We conclude that there exists a unique value $\sigma_\delta\in(-\tfrac{2\delta}{L+\delta},\tfrac{2\delta}{ah-L+\delta})$  such that $\mathcal{I}(\sigma_\delta)=L$, so that $\gamma_{\sigma_\delta}\in\mathcal{A}_{a,h,L}$. 

It is  clear that $\sup_{t\in[0,1]}|\hat\gamma(t)-\gamma_{\sigma_{\delta}}(t)|<|\sigma_\delta|h$. 
Eventually, by taking derivatives in \eqref{vecchia} and \eqref{nuova}  we get
\[\begin{aligned}
|\mathcal{F}(\gamma_{\sigma_\delta})-\mathcal{F}(\hat\gamma)|&\le C \int_0^1|\gamma'_{\sigma_\delta}(t)-\hat\gamma'(t)|\,dt=\sum_{i=1}^n\int_{{t_{i-1}}}^{t_i}
\frac{|\sigma_\delta||\hat\gamma_2(t_i)-\hat\gamma_2(t_{i-1})|}{t_i-t_{i-1}}\,dt\\
&\le|\sigma_\delta|\sum_{i=1}^n |\gamma_2(t_i)-\gamma_2(t_{i-1})|\le |\sigma_\delta|\int_0^1|\gamma'(t)|\,dt
\end{aligned}\]
By taking \eqref{gammaenne} and the latter estimates into account, we get $$\sup_{t\in[0,1]}
|\gamma_{\sigma_\delta}(t)-\gamma(t)|<\delta+|\sigma_\delta|h,\qquad
|\mathcal{F}(\gamma_{\sigma_\delta})-\mathcal{F}(\gamma)|<\delta+|\sigma_\delta|\int_0^1|\gamma'(t)|\,dt.$$ 

Since $\sigma_\delta$ vanishes as $\delta\downarrow 0$,   if  we define, for $\delta$  small enough, $\bar\gamma:=\gamma_{\sigma_\delta}$ we obtain $\bar\gamma\in\mathcal{A}_{a,h,L}$ and  {\it i), ii) iii), iv), v)} hold.
\BBB
\end{proof}

\BBB

\section{Relaxation}\label{BVrelaxhotel}

In this section we gather some results about minimization of auxiliary functionals defined on $BV$ functions of one variable, rather than parametric curves of the plane. We start by introducing some more notation.

%

 Let $g$ as in \eqref{gi} and let
\begin{equation}\label{g**}
g^{**}(z):=\left\{\begin{array}{lll}g(z)\quad&\mbox{ if $z<1$}\medskip\\
z-\tfrac12\quad&\mbox{ if $z\ge 1$}
\end{array}\right.
\end{equation}
be the convex envelope of $g$, i.e., the largest convex function that is smaller than or equal to $g$. In the following
 for every $u\in BV_{loc}(\mathbb R)$,
  $u'$ will denote the distributional derivative and $\dot u,\ u'_{s}$ its absolutely continuous and singular part respectively. 
Let
\begin{equation*}\begin{aligned}
\mathcal{B}_{a,h,L}&:=\left\{u\in W^{1,1}_{loc}(\mathbb R):\, u(x)\equiv 0\ \hbox{if} \ x< 0 ,\, u(x)\equiv h\ \hbox{if} \ x> a, \; 0\le u\le h,\;\int_{0}^a u=L\right\},
\\\mathcal{B}_{a,h,L}^{+}&:= \left\{u\in\mathcal{B}_{a,h,L}: u'\ge 0\right\},
\\\mathcal{C}^+_{a,h,L}&:=\left\{u\in BV_{loc}(\mathbb R):\, u(x)\equiv 0\ \hbox{if} \ x< 0 ,\, u(x)\equiv h\ \hbox{if} \ x> a, \; u'\ge 0,\;\int_{0}^a u=L\right\}.
\end{aligned}\end{equation*}
We further define the functionals 
\begin{equation*}
\begin{aligned}
\mathcal{G}(u)&:=\left\{\begin{array}{lll}\displaystyle\int_0^a g(\dot u(x))\,dx\quad&\mbox{ if $u\in\mathcal{B}_{a,h,L}$}\medskip\\
+\infty\quad&\mbox{ otherwise in }\ BV_{loc}(\mathbb R),
\end{array}\right.\\
\\\mathcal{J}(u)&:=\left\{\begin{array}{lll}\displaystyle\int_0^a g^{**}(\dot u(x))\,dx\quad&\mbox{ if $u\in\mathcal{B}_{a,h,L}$}\medskip\\
+\infty\quad&\mbox{ otherwise in }\ BV_{loc}(\mathbb R),
\end{array}\right.
\end{aligned}
\end{equation*}
and the functionals
\begin{equation}\label{scJ+}
\begin{aligned}
\\\mathcal{J}_{+}(u)&:=\left\{\begin{array}{lll}\displaystyle\int_0^a g^{**}(\dot u(x))\,dx\quad&\mbox{ if $u\in\mathcal{B}_{a,h,L}^{+}$}\medskip\\
+\infty\quad&\mbox{ otherwise in }\ BV_{loc}(\mathbb R),
\end{array}\right.\\
\\\overline{\mathcal J}_{+}(u)&:=\left\{\begin{array}{lll}\displaystyle\int_0^a g^{**}(\dot u(x))\,dx+ u'_{s}([0,a])\quad&\mbox{ if $u\in\mathcal{C}^+_{a,h,L}$}\medskip\\
+\infty\quad&\mbox{ otherwise in }\ BV_{loc}(\mathbb R).
\end{array}\right.
\end{aligned}
\end{equation}
We shall often use the shorthands $\inf\mathcal{G}$, $\inf\mathcal{J}$, $\inf\mathcal{J}_+$ $\inf{\overline{\mathcal{J}}}_+$ for the infimum over 
$BV_{loc}(\mathbb R)$. We  also write $\inf\mathcal{F}$ in place of $\inf\{\mathcal{F}(\gamma): \gamma\in\mathcal{A}_{a,h,L} \}$, which is the infimum of problem \eqref{problem1}.

 The first statement of this section is a suitable version of Lemma \ref{infcurve} for the new functionals.
 
  \begin{lemma}\label{piecewise}
 
  Let $\epsilon>0$. Let $u\in\mathcal{B}_{a,h,L}$. There exist a piecewise affine function $\bar u\in\mathcal{B}_{a,h,L}$ such that 
  $|\mathcal G(u)-\mathcal G(\bar u) |+|\mathcal J(u)-\mathcal J(\bar u)|<\epsilon$. Moreover, $\bar u \in\mathcal B^+_{a,h,L}$ if $u\in\mathcal B_{a,h,L}^+$. In particular,
  there hold
  \[
 \inf\mathcal{G}=\inf\{\mathcal{G}(u) : u\in\mathcal{B}_{a,h,L},\,\mbox{$u$ is piecewise affine}\},
 \]
  \[
 \inf\mathcal{J}=\inf\{\mathcal{J}(u) : u\in\mathcal{B}_{a,h,L},\,\mbox{$u$ is piecewise affine}\},
 \]
 \[
 \inf\mathcal{J}_+=\inf\{\mathcal{J}_+(u) : u\in\mathcal{B}^+_{a,h,L},\,\mbox{$u$ is piecewise affine}\}.
 \]

 \end{lemma}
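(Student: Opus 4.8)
The plan is to prove a single approximation statement from which all three infimum identities follow at once. Given $\epsilon>0$ and $u\in\mathcal B_{a,h,L}$, I would first produce a piecewise affine approximant of $u$ with nodes on the graph, matching $u$ at a partition $0=x_0<x_1<\dots<x_N=a$ chosen fine enough. Call it $\breve u$. Since $g$ and $g^{**}$ are locally Lipschitz on $\mathbb R$ (indeed $g$ is $C^1$ with bounded derivative only on bounded sets, so one must be slightly careful, but on the relevant range of slopes it is Lipschitz), a standard estimate gives $|\mathcal G(u)-\mathcal G(\breve u)|+|\mathcal J(u)-\mathcal J(\breve u)|\le C\int_0^a|\dot u-\dot{\breve u}|\,dx$, and by density of piecewise affine functions in $W^{1,1}(0,a)$ this can be made as small as desired. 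Note $\breve u$ is automatically monotone nondecreasing if $u$ is, since it interpolates values of a monotone function; and $0\le\breve u\le h$ is preserved. The only failure is that $\int_0^a\breve u$ need not equal $L$ exactly.

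Next I would correct the area. The cleanest device is a small vertical affine reparametrization of the same type used in Lemma~\ref{infcurve}: for $\sigma$ in a small interval around $0$, replace the interior nodal heights $\breve u(x_i)$ by $(1-|\sigma|)\breve u(x_i)+\sigma_+ h$ (keeping the boundary conditions $u\equiv0$ for $x<0$, $u\equiv h$ for $x>a$), obtaining a piecewise affine $u_\sigma\in\mathcal B_{a,h,L}$-candidate. The integral $\mathcal I(\sigma):=\int_0^a u_\sigma$ depends continuously and strictly monotonically on $\sigma$, with $\mathcal I(0)=\int_0^a\breve u$ close to $L$ and $\mathcal I(\pm 1)$ hitting $ah$ and $0$; hence some $\sigma_\epsilon$ with $|\sigma_\epsilon|$ small gives $\int_0^a u_{\sigma_\epsilon}=L$. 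Taking derivatives of the affine pieces, $\int_0^a|\dot u_{\sigma_\epsilon}-\dot{\breve u}|\le|\sigma_\epsilon|\int_0^a|\dot{\breve u}|+|\sigma_\epsilon|h\,(\text{boundary terms})$, which is small; so again by the local Lipschitz bound on $g,g^{**}$ the values $\mathcal G$ and $\mathcal J$ move by at most $C|\sigma_\epsilon|(\mathrm{const})$. Moreover the correction preserves monotonicity: if $\breve u$ is nondecreasing then so is $u_\sigma$ for $\sigma\in(-1,1)$, since the transformation is affine and increasing in the height variable. Setting $\bar u:=u_{\sigma_\epsilon}$ for $\epsilon$ small enough gives the claimed $\bar u\in\mathcal B_{a,h,L}$ (and in $\mathcal B^+_{a,h,L}$ when $u\in\mathcal B^+_{a,h,L}$) with $|\mathcal G(u)-\mathcal G(\bar u)|+|\mathcal J(u)-\mathcal J(\bar u)|<\epsilon$.

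The three displayed identities are then immediate: one inequality is trivial (the restricted infimum is over a subclass), and the other follows because any $u$ in $\mathcal B_{a,h,L}$ (resp. $\mathcal B^+_{a,h,L}$) is approximated in $\mathcal G$-value, $\mathcal J$-value, $\mathcal J_+$-value by piecewise affine competitors in the same class. For the $\mathcal J_+$ identity one uses the part of the statement asserting $\bar u\in\mathcal B^+_{a,h,L}$, together with the fact that on $\mathcal B^+_{a,h,L}$ the functional $\mathcal J_+$ agrees with $\mathcal J$.

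The main obstacle I anticipate is the Lipschitz estimate for $g$ and $g^{**}$: $g(z)=z_+^3/(1+z^2)$ grows linearly, so its derivative is bounded and $g$ is globally Lipschitz — that is fine — but one must verify the bound is uniform enough, and that the slopes $\dot{\breve u},\dot u_\sigma$ stay under control in $L^1$ so the product estimate $|\mathcal G(u)-\mathcal G(v)|\le \|g'\|_\infty\int_0^a|\dot u-\dot v|$ genuinely closes. Since $g^{**}$ also has bounded derivative ($(g^{**})'(z)\le 1$ for $z\ge1$, and equals $g'$ below), the same works verbatim for $\mathcal J$. The bookkeeping of how small $\delta$ must be chosen at each stage to make the final total error below $\epsilon$ is routine and can be compressed, exactly as in the proof of Lemma~\ref{infcurve}.
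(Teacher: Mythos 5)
Your proposal is correct and follows essentially the same route as the paper: the authors' proof of this lemma simply observes that $g$ and $g^{**}$ are globally Lipschitz and then invokes the construction of Lemma~\ref{infcurve} (piecewise affine interpolation with nodes on the graph, a global Lipschitz estimate for the energy difference, and the $\sigma$-parametrized vertical deformation of interior nodes to restore the area constraint while preserving monotonicity and the bounds $0\le u\le h$), which is exactly what you spell out. The only cosmetic inaccuracy is the claim that $\mathcal I(\pm1)$ equals $ah$ and $0$ (the fixed endpoints prevent this), but all you need is continuity, strict monotonicity, and that $\mathcal I(0)$ is within $\delta$ of $L$, which your argument already supplies.
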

 \begin{proof}
 By considering that both $g$ from \eqref{gi} and $g^{**}$ from \eqref{g**} are Lipschitz on $\mathbb R$, the proof follows the same line of that of Lemma \ref{infcurve}. It is in fact an application of the same construction to the case of curves in $\mathcal A_{a,h,L}$ that are graphs of functions in $\mathcal{B}_{a,h,L}$, therefore we omit the details.
  \end{proof}

 The following result shows that it is  convenient to consider nondecreasing functions.
 
 \begin{lemma}\label{steps}
 There holds 
 \[
 \inf\mathcal{J}=\inf \mathcal{J}_{+}= \inf\{\mathcal{J}(u):u\in \mathcal{B}_{a,h,L}^{+}\}.
 \]
 \end{lemma}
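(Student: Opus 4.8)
The plan is to prove the two equalities by a symmetrization/rearrangement argument combined with the fact that $g^{**}$ is convex. The key point is that for a function $u\in\mathcal B_{a,h,L}$ whose derivative $\dot u$ changes sign, one can replace it by a nondecreasing function with the same boundary data, the same integral $\int_0^a u=L$, and strictly smaller (or equal) value of $\mathcal J$. First I would reduce to piecewise affine competitors via Lemma \ref{piecewise}, so that $\dot u$ takes finitely many values on a partition $0=x_0<x_1<\dots<x_N=a$ of $[0,a]$, say $\dot u\equiv m_j$ on $(x_{j-1},x_j)$ with $\ell_j:=x_j-x_{j-1}$. For such a $u$ we have $\mathcal J(u)=\sum_j \ell_j\,g^{**}(m_j)$, and the constraints are $\sum_j \ell_j m_j=h$ (since $u(a)-u(0)=h$), together with $0\le u\le h$ on $[0,a]$ and $\int_0^a u=L$.

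The main step is the rearrangement. Given the collection of "slope pieces" $(\ell_j,m_j)$, I would consider reordering them along $[0,a]$ in order of increasing slope $m_j$: this produces a new piecewise affine function $\tilde u$ with $\tilde u(0)=0$, $\tilde u(a)=h$ (the total increment $\sum\ell_j m_j=h$ is unchanged), which is automatically nondecreasing once all negative slopes are dealt with — but note that if some $m_j<0$ the reordered function need not satisfy $0\le \tilde u\le h$. To handle this, the cleaner route is: first observe $g^{**}(z)=g^{**}(z_+)\ge g^{**}(0)=0$ is not quite true (since $g^{**}(z)=z-\tfrac12$ can be negative for $z<\tfrac12$), so one cannot simply truncate slopes to their positive parts. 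Instead I would argue as follows. Among all piecewise affine $u\in\mathcal B_{a,h,L}$ with a given multiset of slopes, the monotone rearrangement $u^\sharp$ (slopes sorted increasingly) minimizes the oscillation and in particular satisfies $0\le u^\sharp\le h$ automatically when all slopes are $\ge 0$; and passing from $u$ to $u^\sharp$ leaves $\mathcal J$ unchanged because $\mathcal J$ depends only on the multiset $\{(\ell_j,m_j)\}$. The remaining issue — guaranteeing the area constraint $\int_0^a u^\sharp = L$ — is restored by the same vertical-shift / convex-combination device already used in Step 2 of Lemma \ref{infcurve}: one interpolates between the monotone rearrangement and the affine graph joining $(0,0)$ to $(a,h)$, which changes the value of $\mathcal J$ by an arbitrarily small amount while adjusting $\int_0^a u$ continuously through $L$.

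The last piece is eliminating the negative slopes. Here is where convexity of $g^{**}$ enters decisively: if $\dot u$ is not a.e.\ nonnegative, I would show $\int_0^a g^{**}(\dot u)\,dx \ge \int_0^a g^{**}(\dot w)\,dx$ for a suitable $w\in\mathcal B^+_{a,h,L}$ with the same endpoint values and the same integral. Concretely, replace any maximal interval where $\dot u<0$ together with an adjacent interval of positive slope by a single interval of constant (nonnegative) slope realizing the same net increment over the union; by Jensen's inequality applied to the convex function $g^{**}$, the constant-slope piece has $\mathcal J$-cost no larger than $\int g^{**}(\dot u)$ over that union, and iterating removes all descents. Once $\dot u\ge 0$ a.e., the constraint $0\le u\le h$ is free, so $u\in\mathcal B^+_{a,h,L}$, giving $\inf\{\mathcal J(u):u\in\mathcal B^+_{a,h,L}\}\le\inf\mathcal J$; the reverse inequality is trivial since $\mathcal B^+_{a,h,L}\subset\mathcal B_{a,h,L}$. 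Finally $\inf\mathcal J_+=\inf\{\mathcal J(u):u\in\mathcal B^+_{a,h,L}\}$ is immediate because $\mathcal J_+$ and $\mathcal J$ agree on $\mathcal B^+_{a,h,L}$ and $\mathcal J_+=+\infty$ off it.

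I expect the main obstacle to be the bookkeeping around the two side constraints $0\le u\le h$ and $\int_0^a u=L$ during the rearrangement: the monotone rearrangement and the Jensen-type replacement are designed to control $\mathcal J$ and the endpoint increment, but they interact awkwardly with the pointwise bounds and the area. The right order is therefore: (1) reduce to piecewise affine via Lemma \ref{piecewise}; (2) use Jensen/convexity to remove descents, obtaining a nondecreasing competitor with the correct endpoints and a possibly wrong area; (3) restore the area by the vertical interpolation argument of Lemma \ref{infcurve}, Step 2, at the cost of an $\epsilon$; (4) let $\epsilon\to0$.
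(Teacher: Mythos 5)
Your reduction to piecewise affine competitors, the two trivial inclusions, and the identity $\inf\mathcal J_+=\inf\{\mathcal J(u):u\in\mathcal B^+_{a,h,L}\}$ are all fine, and the Jensen-based removal of descents does yield a nondecreasing competitor $w$ with the right endpoints, values in $[0,h]$, and $\mathcal J(w)\le\mathcal J(u)$. The genuine gap is your step (3): after the descents are removed, the discrepancy $\bigl|\int_0^a w-L\bigr|$ is in general of order one, not small. For instance, if $u$ rises from $0$ to $h$ on $[0,a/4]$, falls back to $0$ on $[a/4,a/2]$ and rises to $h$ on $[a/2,a]$, the merge replaces the first two pieces by the zero function and the area drops by $ah/4$. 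The vertical-interpolation device of Step~2 of Lemma \ref{infcurve} costs $o(1)$ in energy only because there the interpolation parameter $\sigma_\delta$ is small, and it is small precisely because the area error is $<\delta$; with an $O(1)$ area error the parameter, and hence the energy penalty (which is proportional to $|\sigma|$ times the total variation of the second component), is $O(1)$. Interpolating instead with the affine graph joining $(0,0)$ to $(a,h)$ fares no better: convexity only gives $\mathcal J((1-t)w+t\ell)\le(1-t)\mathcal J(w)+t\mathcal J(\ell)$, and $\mathcal J(\ell)$ need not be close to the infimum, so for $t$ of order one nothing is gained. As written, the chain of inequalities proving $\inf\{\mathcal J(u):u\in\mathcal B^+_{a,h,L}\}\le\inf\mathcal J$ does not close.

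The paper avoids this by never letting the area escape: it constructs two monotone piecewise linear functions $u_-\le u\le u_+$ whose vertices lie on the graph of $u$ (so $\int_0^a u_-\le L\le\int_0^a u_+$), proves that the family of monotone vertex configurations lying on the graph of $u$ is connected, and extracts by an intermediate value argument a monotone $w$ with vertices on the graph of $u$ and $\int_0^a w=L$ exactly; since consecutive vertices of $w$ lie on the graph of $u$, one has $\int_p^q\dot w=\int_p^q\dot u$ on each subinterval and Jensen gives $\mathcal J(w)\le\mathcal J(u)$ with no area correction needed. To rescue your route you would have to perform the merging while tracking the area, e.g.\ by producing one monotone competitor with area $\ge L$ and one with area $\le L$ and connecting them within a family on which $\mathcal J$ does not increase --- which is essentially the paper's construction. (A side remark: your worry that $g^{**}$ may be negative is unfounded, since the formula $g^{**}(z)=z-\tfrac12$ applies only for $z\ge1$; in fact $g^{**}\ge0$ and $g^{**}(z)=g^{**}(z_+)$ everywhere, though this does not by itself repair the argument, because truncating slopes to their positive parts changes the endpoint value $u(a)$.)
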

 \begin{proof}
 Thanks to Lemma \ref{piecewise}, it is enough to show that for any piecewise linear function $u\in\mathcal{B}_{a,h,L}$, there exists a piecewise linear nondecreasing function $w\in\mathcal{B}_{a,h,L}$ such that $\mathcal{J}(w)\le\mathcal{J}(u)$. This will be achieved is some steps.\\
 
 \noindent\textbf{Step 1.}
 For $n\in \mathbb{N}$ we shall consider sequences  of $N$ points $(x_i,y_i)_{\{i=1,\ldots N\}}\in S$, where 
 $$S:=\{(x_i, y_i)_{\{i=1,\ldots N\}}:\, (x_1,\ldots x_N)\in[0,a]^N,\, (y_1,\ldots,y_N)\in [0,h]^N,\,   0<x_1<\ldots< x_N<a\}$$
  is a connected subset of the rectangle $[0,a]^N\times [0,h]^N$.
 To a sequence of points  $(x_i,y_i)_{\{i=1,\ldots N\}}\in S$ we may associate a continuous piecewise linear function 
  $u=u_{\{x_1,y_1,\ldots x_N,y_N\}}$, joining the endpoints $(0,0)$ and $(a,h)$, with vertices located at the points $(x_i, y_i)$, and such that $0\le u\le h$. As a convention, we do not include the endpoints $(0,0)$ and $(a,h)$ in the list of vertices, and we do not exclude that three or more consecutive points lie on the same line segment.
  
 We notice that the energy of $u=u_{\{x_1,y_1,\ldots x_N,y_N\}}$ is
 $$\mathcal{J}(u)=\sum_{i=0}^{N}\int_{x_i}^{x_{i+1}}g^{**}(u(t))\,dt=\sum_{i=0}^N (x_{i+1}-x_i) \,g^{**}\!\!\left(\frac{y_{i+1}-y_i}{x_{i+1}-x_i}\right).$$ 
  In particular, $\mathcal{J}$ is continuous  on $S$, as $g^{**}$ is continuous on $\mathbb{R}$.
 We also notice that the area below the graph of $u=u_{\{x_1,y_1,\ldots x_N,y_N\}}$ is given by
 \[\begin{aligned}
 \int_0^a u(t)\,dt&=\sum_{i=0}^N\int_{x_i}^{x_{i+1}}\left(\frac{y_{i+1}-y_i}{x_{i+1}-x_i}\,(t-x_i)+y_i\right)\,dt\\&=\sum_{i=0}^N\left(\frac12 (y_{i+1}-y_i)(x_{i+1}-x_i)+(x_{i+1}-x_i)y_i\right)
 \end{aligned}
 \]
 and it is also a continuous function on $S$.
 
 Let us moreover introduce a connected subset of $S$ by
 \begin{equation}\label{S'}
 S':=\{(x_i,y_i)_{\{i=1,\ldots N\}}\in S: 0\le y_1\le y_2\le\ldots\le y_N\le h\},
 \end{equation}
 so that the corresponding function $u_{\{x_1,y_1,\ldots x_N,y_N\}}$ is a monotone nondecreasing piecewise constant functions with $N$ vertices.\\
  
 \noindent\textbf{Step 2.}
 Now, let us fix $(\bar x_i,\bar y_i)_{\{i=1,\ldots N\}}\in S$ and the corresponding function $u=u_{\{\bar x_1,\bar y_1,\ldots \bar x_N,\bar y_N\}}$. 
 Let $$\emptyset\neq V_1:=\mathrm{argmin}\{u(x):x\in\{\bar x_1,\ldots \bar x_N\}\}\qquad\mbox{and}\qquad v_1=\max V_1.$$ Then we recursively define $$V_j=\mathrm{argmin}\{u(x):x\in\{\bar x_1,\ldots \bar x_N\},\,x>v_{j-1}\}\qquad \mbox{and} \quad v_j=\max V_j,$$ for any $j\in\{2,\ldots N\}$ such that $\bar x_N>v_{j-1}$. Let $J:=\max\{j\in\{1,\ldots N\}: \bar x_N>v_{j-1} \}$, so we necessarily have  $v_J=\bar x_N$.
 Notice that  by construction
 \begin{equation}\label{slope}
 0<v_1<\ldots< v_J=\bar x_N,\quad 0\le u(v_1)<\ldots < u(v_J)\le h,\quad\{v_1,\ldots v_J\}\subseteq \{\bar x_1,\ldots \bar x_N\}.\end{equation}
 In particular, the continuous piecewise linear function $u_-$ having vertices exactly at the points $\{v_1,\ldots v_J\}$ (and endpoints at $(0,0), (a,h)$) is nondecreasing on $[0,a]$. 

 With the convention $v_0=0$ and $v_{J+1}=a$, on each interval $[v_j, v_{j+1}]$, $j\in (0,J)$, let us consider the line segment $$\mathfrak s_j(x)=\frac{u(v_{j+1})-u(v_j)}{v_{j+1}-v_j}\,(x-v_j)+u(v_j)$$  connecting $(v_j, u(v_j))$ and $(v_{j+1}, u(v_{j+1}))$. We claim that 
 \[
 u(x)\ge \mathfrak s_j(x) \quad\mbox{ on $[v_j,v_{j+1}]$}.
 \]
 This is obvious if $u\equiv 0$ or $u\equiv h$ in $[v_j,v_{j+1}]$, and in fact it holds with equality on $[v_J,v_{J+1}]$ since $v_J=\bar x_N$.
  Otherwise, from \eqref{slope} $\mathfrak{s_j}$ has positive slope and if by contradiction there is a point $p\in(v_j,v_{j+1})$ such that $u(p)<\mathfrak s_j(p)$, then since $u$ is piecewise linear and joins   $(v_j, u(v_j))$ with $(v_{j+1}, u(v_{j+1}))$, then $u$ needs to have at least one vertex $p'$ on the interval $(v_j,v_{j+1})$, such that $$u(p')<\mathfrak{s}_j(p')<\mathfrak{s}_j(v_{j+1})=u(v_{j+1}).$$ This is a contradiction, since by definition of $V_{j+1}$ and $v_{j+1}$ the value of $u$ at $v_{j+1}$ is minimal among all the vertex points $v$ of $u$ such that $v>v_j$. The claim is proved and since $j$ is arbitrary  we have $u_-(x)\le u(x)$ on $[0,a]$.

  For the sake of consistency, if $J<N$ we complete te sequence $(v_i, u(v_i))_{\{i=1,\ldots J\}}$ by adding $N-J$ vertices on a uniform partition of the line segment connecting $(v_J,u(v_J))$ to $(a,h)$, so that we obtain a sequence of points $(v_i, u(v_i))_{\{i=1,\ldots N\}}\in S$,  and the associated piecewise linear function is still $u_-$.

 All in all, we have constructed a sequence of $N$ vertices  $(v_i, u(v_i))_{\{i=1,\ldots N\}}\in S$, and the associated piecewise linear function $u_-$ is  nondecreasing
with $u_-(0)=0, u_-(a)=h$, it satisfies $0\le u_-\le h$, and moreover its vertices are on the graph of $u$.

  Eventually, with an analogous construction we provide another continuous piecewise constant function $0\le u_+\le h$, with $u^+(0)=0, u^+(a)=h$,  having a sequence of vertices in $S$ which lie on the graph of u, such that $u_+$ is nondecreasing and $u_+(x)\ge u(x)$ for any $x\in[0,a]$. In particular, the set of vertices of $u_+$ and $u_-$ belong to $S'$ from \eqref{S'}.\\
  
  \noindent\textbf{Step 3.}  
Given $(\bar x_i,\bar y_i)_{\{i=1,\ldots N\}}\in S$ and the associate piecewise linear function $u=u_{\{\bar x_1,\bar y_1,\ldots \bar x_N,\bar y_N\}}$ from the previous step, we consider the set 
\[
S'':=\{(x_i,y_i)_{\{i=1,\ldots N\}}\in S:(x_i,y_i)\in\mathrm{graph}(u), i=1,\ldots N \}.
\]  
  We claim that $S''$ is a conncected subset of $S$. Indeed, let $(x_i,y_i)_{i=1,\ldots N}\in S$ and $(\tilde x_i,\tilde y_i)_{i=1,\ldots N}\in S$. Then for each $t\in [0,1]$, we let $$x_i(t):=(1-t)x_i+t\tilde x_i,\qquad y_i(t):=u(x_i(t)), \qquad i=1,\ldots N$$ so that $[0,1]\ni t\mapsto (x_i(t),y_i(t))_{\{i=1,\ldots N\}}\in [0,a]^N\times [0,h]^N$ is a continuous mapping and by its very definition we have $(x_i(t),y_i(t))_{\{i=1,\ldots N\}}\in S''$ for any $t\in[0,1]$. This proves the claim.\\
  
  \noindent\textbf{Step 4.}
  We consider again a generic  piecewise linear mapping $u=u_{\{\bar x_1,\bar y_1,\ldots \bar x_N,\bar y_N\}}\in \mathcal{B}_{a,h,L}$, with vertices at $(\bar x_i,\bar y_i)_{\{i=1,\ldots N\}}\in S$. We consider the two piecewise linear nondecreasing mappings $u_+$, $u_-$, defined in Step 2. 
  By the construction of $u^+$ and $u_-$, the respective sets of $N$ vertices belong to
   $ S'\cap S''$. Moreover, we recall that  the area below the graph is continuous on $S$, as seen in Step 1. On the other hand, still from Step 2 we have $u_-\le u\le u_+$ therefore $\int_0^au_-\le \int_0^a u=L\le \int_0^a u_+$. Since the set of vertices of $u_+$ and $u_-$ belong to $S'\cap S''$, which is a connected subset of $S$ by Step 3, and since the area is continuous on $S$, we deduce that there exists a set of vertices in $S'\cap S''$ which realizes the value $L$ of the area. We let $w$ the corresponding piecewise linear function, which therefore belongs to $\mathcal{B}_{a,h,L}$.
  If $0\le p<q\le h$ correspond to any two consecutive vertices of $w$ (or a vertex and an endpoint), since these points  lie on the graph of $u$ we have
  \[
  \int_{p}^{q} u'(t)\,dt=\int_{p}^{q} w'(t)\,dt.
  \]
 Since $g_*$ is convex on $\mathbb{R}$, by the above equality we may invoke Jensen inequality and get 
 \[
 \int_{p}^q g^{**}(w(t))\,dt\le \int_p^q g^{**}(u(t))\,dt.
 \]
 We conclude that $\mathcal{J}(w)\le \mathcal{J}(u)$, where $w$ is a nondecreasing piecewise linear function in $\mathcal{B}_{a,h,L}$.
 \end{proof}

 The following is not a $\Gamma$-convergence result since in the limsup inequality the sequence $u_j$ is not required to be converging to $u$. In any case, this will be sufficient for our later purposes. 
 
 \begin{lemma}\label{Gamma1}
 The following two properties hold true:\\
{\rm a)} for every $u\in BV_{loc}(\mathbb R)$ and every sequence $(u_j)\subset BV_{loc}(\mathbb R)$ such that $u_{j}\to u$ in $w^{*}- BV_{loc}(\mathbb R)$, there holds
\begin{equation}\label{liminf}
\displaystyle\liminf_{j\to \infty} \mathcal J_{+}(u_{j})\ge \overline{\mathcal J}_{+}(u);
\end{equation}
{\rm b)} for every $u\in BV_{loc}(\mathbb R)$ there exists a sequence $(u_{j})\subset BV_{loc}(\mathbb R)$ such that 
\begin{equation}\label{limsup}
\displaystyle\limsup_{j\to \infty} \mathcal J_{+}(u_{j})\le \overline{\mathcal J}_{+}(u).
\end{equation}
 \end{lemma}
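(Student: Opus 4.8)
The plan is to prove the two inequalities separately, treating \eqref{liminf} and \eqref{limsup} as the liminf and limsup parts of an (almost) $\Gamma$-convergence statement for $\mathcal J_+$ towards $\overline{\mathcal J}_+$.

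\textbf{The liminf inequality.} First I would dispose of the trivial cases: if $\overline{\mathcal J}_+(u)=+\infty$ there is nothing to prove, and if $\liminf_j\mathcal J_+(u_j)=+\infty$ likewise. So assume, passing to a subsequence realizing the liminf, that $\mathcal J_+(u_j)\le C<\infty$ for all $j$, which forces $u_j\in\mathcal B^+_{a,h,L}$, i.e. $u_j$ is nondecreasing, $0\le u_j\le h$, $u_j\equiv 0$ on $(-\infty,0)$, $u_j\equiv h$ on $(a,\infty)$, and $\int_0^a u_j=L$. The $w^*$-$BV_{loc}$ convergence $u_j\to u$ then passes these constraints to the limit — monotonicity and the boundary values are preserved, $0\le u\le h$, and $\int_0^a u_j\to\int_0^a u=L$ by e.g. dominated convergence on the pointwise-a.e. convergent (up to subsequence) bounded sequence — so $u\in\mathcal C^+_{a,h,L}$ and $\overline{\mathcal J}_+(u)$ is finite. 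The heart of the matter is then the lower semicontinuity of the functional $v\mapsto\int_0^a g^{**}(\dot v)\,dx + v'_s([0,a])$ with respect to $w^*$-$BV$ convergence. Since $g^{**}$ is convex, nonnegative, Lipschitz, and its recession function is $(g^{**})^\infty(z)=z$ for $z\ge 0$ (as $g^{**}(z)=z-\tfrac12$ for $z\ge 1$), and since all the measures $u_j'$ and $u'$ are nonnegative on $[0,a]$, the functional $u'\mapsto\int_{[0,a]} g^{**}\big(\tfrac{du'}{d\mathcal L^1}\big)\,d\mathcal L^1 + \int_{[0,a]} (g^{**})^\infty\big(\tfrac{du'_s}{d|u'_s|}\big)\,d|u'_s|$ is exactly of the Reshetnyak/Goffman--Serrin type and is $w^*$-lower semicontinuous; this is precisely the value $\overline{\mathcal J}_+(u)$, because $(g^{**})^\infty(1)=1$ gives $\int (g^{**})^\infty\,d|u'_s| = u'_s([0,a])$. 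I would cite the standard BV lower semicontinuity theorem for convex integrands of measures (Ambrosio--Fusco--Pallara, or Goffman--Serrin) rather than reprove it. One mild technical point worth a sentence: because $u_j\equiv 0$ near $0^-$ and $\equiv h$ near $a^+$, no mass of $u_j'$ escapes the compact set $[0,a]$, so working on a slightly larger interval and restricting causes no boundary loss.

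\textbf{The limsup inequality.} Here I have the freedom that $u_j$ need \emph{not} converge to $u$, so the construction is easy. If $\overline{\mathcal J}_+(u)=+\infty$ there is nothing to prove, so assume $u\in\mathcal C^+_{a,h,L}$. The strategy is: approximate the nondecreasing $BV$ function $u$ (on $[0,a]$, with fixed endpoint values $0$ and $h$ and fixed integral $L$) by Lipschitz — actually piecewise affine — nondecreasing functions $u_j\in\mathcal B^+_{a,h,L}$ in such a way that $\int_0^a g^{**}(\dot u_j)\to\int_0^a g^{**}(\dot u)\,dx + u'_s([0,a])$. Concretely, mollify $u$ (which keeps monotonicity, the bounds $0\le u\le h$, and the boundary behaviour, and changes the integral by an arbitrarily small amount which can be corrected by a small affine tilt staying within $[0,h]$, exactly as in the area-correction Step 2 of Lemma \ref{infcurve}); the absolutely continuous part converges by continuity of $g^{**}$ and equi-integrability, while on the support of $u'_s$ the mollified derivative becomes large and $g^{**}(\dot u_j)\approx \dot u_j - \tfrac12$ integrates (over the shrinking neighbourhood) to approximately the jump mass $u'_s([0,a])$ — this is the usual "recession-function cost of a jump" computation. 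Then Lemma \ref{piecewise} lets me further replace the mollified $u_j$ by piecewise affine ones in $\mathcal B^+_{a,h,L}$ with an error less than $1/j$ in $\mathcal J$, producing the desired recovery sequence with $\limsup_j\mathcal J_+(u_j)\le\overline{\mathcal J}_+(u)$.

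\textbf{Main obstacle.} The routine parts are the constraint-passing in (a) and the area-correction bookkeeping in (b). The genuine work is identifying the limit functional in (a) correctly: one must recognize that the $w^*$-lower semicontinuous envelope of $\int_0^a g^{**}(\dot u)$ on nondecreasing functions picks up the singular term $u'_s([0,a])$ with coefficient exactly $1$ — neither more nor less — and this hinges on computing the recession function $(g^{**})^\infty(z)=\lim_{s\to\infty} g^{**}(sz)/s = z$ for $z\ge 0$ and invoking the representation theorem for relaxations of convex integral functionals on $BV$. Getting the constant right (and noting that since everything is nondecreasing only $z\ge 0$ matters, so the one-sided recession slope $1$ is all that appears) is the crux; the rest is soft analysis and the approximation machinery already developed in Lemmas \ref{infcurve} and \ref{piecewise}.
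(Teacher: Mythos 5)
Your proposal is correct, and part a) is essentially what the paper does: it too passes the constraints to the limit and then invokes the standard lower semicontinuity theorem for convex integrands of measures with linear recession at $+\infty$ (citing Buttazzo's monograph and Ambrosio--Dal Maso rather than Reshetnyak/Goffman--Serrin, but it is the same result, and your observation that $(g^{**})^\infty(1)=1$ is exactly why the singular term enters with coefficient $1$). For part b) you take a genuinely different route. The paper does not mollify: it first replaces the whole singular part $u_s'$ by two Dirac masses $c_1\delta_{a/3}+c_2\delta_{a/2}$, with $c_1,c_2$ determined by the two linear conditions \eqref{C1C2} (total mass and first moment), which guarantees that the modified function $\widetilde u$ of \eqref{widetildeu} still satisfies $\int_0^a\widetilde u=L$ exactly and has unchanged energy $\overline{\mathcal J}_+(\widetilde u)=\overline{\mathcal J}_+(u)$; it then approximates the three absolutely continuous pieces by piecewise affine functions via Lemma \ref{piecewise} (preserving the subintegrals), replaces the two jumps by steep affine segments on intervals of length $\lambda_j\to 0$, and restores the area constraint by an intermediate value argument in a sliding parameter $t\in[0,1]$. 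Your mollification produces the same recovery sequence in spirit --- the singular part is paid at its recession cost --- and is shorter to state, but it needs two fixes you only gesture at: a small dilation so that the mollified function is still identically $0$ on $(-\infty,0)$ and $h$ on $(a,+\infty)$, and, since $u_s'$ need not be atomic and the mollified derivative need not exceed $1$ where the singular mass sits, the jump-cost estimate should be run through the inequality $g^{**}(s+t)\le g^{**}(s)+t$ for $t\ge 0$ together with Jensen and Fubini, rather than through the pointwise approximation $g^{**}(\dot u_j)\approx\dot u_j-\tfrac12$. What the paper's two-atom construction buys is exact preservation of the area constraint before the final smoothing, piecewise affine approximants compatible with the rest of its machinery, and a construction that is recycled verbatim for the penalized functionals in Lemma \ref{gammaconv}.
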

 \begin{proof} We first prove a). Let $u_{j}\to u$ in $w^{*}- BV_{loc}(\mathbb R)$ and assume without restriction that  $\mathcal J_{+}(u_{j})$ is a bounded sequence. Then $u\in \mathcal{C}^+_{a,h,L}$ and \eqref{liminf} follow  from \cite[Theorem 3.4.1, Corollary 3.4.2]{B2}, see also \cite{AD}.
 \BBB
 
 In order to  prove b) it will be enough to assume that $u\in \mathcal{C}^+_{a,h,L}$. If this is the case by recalling that $u'_{s}$ has compact support we choose $c_{1}, c_{2}\ge 0$ such that
\begin{equation}\label{C1C2}
 c_{1}+c_{2}= u'_{s}([0,a])\qquad\mbox{and}\qquad (2c_{1}+3c_{2})\,a= 6\int_{-\infty}^{+\infty}xu'_{s}
\end{equation}
and we introduce the function $\widetilde u\in BV_{loc}(\mathbb R)$ defined by:
\begin{equation}\begin{array}{ll}\label{widetildeu}
&\widetilde u(x)= u(x)\ \ \hbox{if}\ \ x< 0, \\
&\\
& 
{\widetilde u}'= \dot u\, dx + c_{1}\delta_{a/3}+c_{2}\delta_{a/2}\qquad \mbox{in $\mathbb R$}.\\
\end{array}\end{equation}
It is readily seen that $\widetilde u(0^+)=0$
and  by using \eqref{C1C2}, \eqref{widetildeu} we get
\begin{equation*}\begin{aligned}
 \widetilde u(a^-)&= \int_{0}^{a}\widetilde u'=\int_{0}^{a} \dot u\,dx+ c_{1}+c_{2}=
\int_{0}^{a} \dot u\,dx+u'_{s}([0,a])= u(a^+)=h,
\end{aligned}
\end{equation*}
and by taking into account \eqref{widetildeu} we get $\widetilde u(x)=h$ for every $x>a$. 
On the other hand, again by \eqref{widetildeu} and the relation $ \int_0^a \widetilde u+x \widetilde u'=\int_0^a (x\widetilde u)'=\displaystyle a \widetilde u(a^-)$, we get
\begin{equation*}
\begin{aligned}
\int_{0}^{a}\widetilde u\,dx&=a\widetilde u(a^-)-\int_{0}^{a} x\dot {\widetilde u}\,dx-\int_{0}^{a} x\widetilde u'_{s}= ah-\int_{0}^{a} x\dot u\,dx-\frac{ac_{1}}{3}-\frac{ac_{2}}{2}\\
&=\displaystyle ah-\int_{0}^{a} x\dot u\,dx-\int_{-\infty}^{+\infty}xu'_{s}
=\displaystyle ah-\int_{0}^{a} x\dot u\,dx-\int_{0}^{a}xu'_{s} +a(h-u(a^-))\\
&=\displaystyle au(a^-)-\int_{0}^{a} x\dot u\,dx-\int_{0}^{a}xu'_{s}=\int_{0}^{a} u\,dx=L.
\end{aligned}
\end{equation*}
Since  ${\widetilde u}'_{s}([0,a])=u'_{s}([0,a])$ we get $\overline{\mathcal J}_{+}(u)=\overline{\mathcal J}_{+}(\widetilde u)$ and it will be enough to find a sequence  $(\widetilde u_{j})\subset BV_{loc}(\mathbb R)$ such that  $\limsup_{j\to \infty} \mathcal J_{+}(\widetilde u_{j})\le \overline{\mathcal J}_{+}(\widetilde u)$ to achieve the result.

Let us consider the nondecreasing $W^{1,1}(0,a/3)$ function $w_1$ satisfying $w_1(0)=0$ and  $w_1(a/3)=\widetilde u(a/3^-)$, that is obtained by restricting $\widetilde u$ to $(0,a/3)$. Similarly, by taking the restriction of $\widetilde u$ to $(a/3,a/2)$ (resp. to $(a/2,a)$), we obtain a nondecreasing function  $w_2\in W^{1,1}(a/3,a/2)$ with $w_2(a/3)=\widetilde u (a/3^+)$ and $w_2(a/2)=\widetilde u (a/2^-)$ (resp. a nondecreasig function $w_3\in W^{1,1}(a/2,a)$ with $w_3(a/2)=\widetilde u(a/2^+)$ and $w_3(a)=h$). We let $a_0:=0$, $a_1:=a/3$, $a_2:=a/2$, $a_3:=a$. Thanks to Lemma \ref{piecewise}, for $i=1,2,3$ we approximate $w_i$ with nondecreasing piecewise affine functions $(w_{i,j})_{j\in\mathbb N}$ with same values at $a_{i-1}$ and $a_i$ and such that
\[
\int_{a_{i-1}}^{a_i} w_{i,j}\,dx= \int_{a_{i-1}}^{a_i}\widetilde u\qquad j=1,2,\ldots
\]
and
\[
\lim_{j\to+\infty}\int_{a_{i-1}}^{a_i} g^{**}(\dot w_{i,j})\,dx= \int_{a_{i-1}}^{a_i} g^{**}(\dot{\widetilde u})\,dx.
\]
Therefore, by defining $v_j:=w_{i,j}$ on $(a_{i-1},a_i)$, $i=1,2,3$ (extended to $\mathbb{R}$ with value $0$ for $x<0$ and with value $h$ for $x>a$), we get $v_j\in\mathcal C^+_{a,h,L}$ and for any $j\in\mathbb N$ the function $v_j$ is piecewise affine nondecreasing,  it is continuous outside at most two jump points at $a/3$ and $a/2$, and 
\begin{equation}\label{endpoints}
v_{j}(0)=\widetilde u(0)=0,\ v_{j}(a/3^\pm)=\widetilde u(a/3^\pm),\ v_{j}(a/2^\pm)=
\widetilde u(a/2^\pm),\ v_{j}(a)=\widetilde u(a)=h.
\end{equation}
Moreover, there holds
\begin{equation}\label{limiting}
\lim_{j\to+\infty} \int_0^a g^{**}(\dot v_j)\,dx=\int_0^a g^{**}(\dot{\widetilde u})\,dx
\end{equation}

If $c_1=c_2=0$, then $v_j\in\mathcal B^+_{a,h,L}$ and we let $\widetilde u_j=v_j$, thus the proof is concluded since \eqref{limsup} holds true. In general, 
as $v_j$  may have jump points at $a/3, a/2$, we  approximate it with a continuous piecewise affine function in $\mathcal{B}_{a,h,L}^+$ as follows. 

We choose  a decreasing vanishing sequence $(\lambda_{j})\subset\mathbb R$ such that $\dot v_{j}$ is constant on $(a/3-\lambda_{j},a/3),\ (a/3, a/3+\lambda_{j}), (a/2-\lambda_{j},a/2),\ (a/2, a/2+\lambda_{j})$ and we define for every $t\in [0,1]$

\begin{equation*}\widetilde v_{j,t}(x):=\left\{\begin{array} {ll}
v_{j,t}^{*}(x)\quad&
 \mbox{if }  (t-1)\lambda_{j} < x-\frac{a}{3} < t\lambda_{j}\\
v_{j,t}^{**}\quad&
\mbox{if }  (t-1)\lambda_{j} < x-\frac{a}{2} < t\lambda_{j}\\
v_{j}(x) \quad& \mbox{otherwise in  $\mathbb R$},
 \end{array}\right.
\end{equation*}
where 
\[
\begin{aligned}
&v_{j,t}^{*}(x):={\lambda_{j}^{-1}}\,{(v_{j}(\tfrac{a}{3}+\lambda_{j}t)-v_{j}(\tfrac{a}{3}-(1-t)\lambda_{j}))\,(x-\tfrac{a}{3}+(1-t)\lambda_{j})}+v_{j}(\tfrac{a}{3}-(1-t)\lambda_{j}),\\
&v_{j,t}^{**}(x):={\lambda_{j}^{-1}}\,{(v_{j}(\tfrac{a}{2}+\lambda_{j}t)-v_{j}(\tfrac{a}{2}-(1-t)\lambda_{j}))\,(x-\tfrac{a}{2}+(1-t)\lambda_{j})}+v_{j}(\tfrac{a}{2}-(1-t)\lambda_{j}).\\
\end{aligned}
\]
It is readily seen that $\widetilde v_{j,t}\in W^{1,1}_{loc}(\mathbb R),\ \widetilde v_{j,t}(0)=0,\ \widetilde v_{j,t}(a)=h$, that
$\Phi_{j}(t):=\int_0^a\widetilde v_{j,t}$ is continuous on the whole $[0,1]$ and that  $\Phi_{j}(1)\le L \le\Phi_{j}(0)$. Hence, there exists $t_{j}\in [0,1]$ such that $\int_0^a\widetilde v_{j,t_{j}}=L$, so that  $\widetilde u_{j}:=\widetilde v_{j,t_{j}}\in \mathcal{B}_{a,h,L}^{+}$ and
\begin{equation*}
\mathcal J_{+}(\widetilde u_{j})=\int_{(0,a)\setminus I_{j}}g^{**}(\dot{\widetilde u_{j}})\,dx+\int_{I_{j}}g^{**}(\dot{\widetilde u_{j}})\,dx, 
\end{equation*}
where we have set 
$\textstyle I_{j}:=  (\frac{a}{3}+(t-1)\lambda_{j}, \frac{a}{3}+t\lambda_{j})\cup  (\frac{a}{2}+(t-1)\lambda_{j}, \frac{a}{2}+t\lambda_{j}).$
By taking into account  \eqref{g**}, \eqref{endpoints}, \eqref{limiting} and the fact that  $\lim_{j\to+\infty}|I_{j}|= 0$ we get 
\begin{equation*}
 \lim_{j\to+\infty} \int_{(0,a)\setminus I_{j}}g^{**}(\dot{\widetilde u_{j}})\,dx= \displaystyle\int_0^a g^{**}(\dot{\widetilde u}(x))\,dx,\qquad
 \lim_{j\to+\infty} \int_{I_{j}}g^{**}(\dot{\widetilde u_{j}})\,dx=  {\widetilde u}'_{s}([0,a]),
\end{equation*}
thus $\lim_{j\to+\infty}\mathcal J_{+}(\widetilde u_{j})= \overline {\mathcal J}_{+}(\widetilde u)$ and b) follows.
\end{proof}
  
  We next give an alternative representation for functional $\overline{\mathcal J}_+$ from \eqref{scJ+} and show that it admits a minimizer.
  \begin{lemma}\label{representationlemma}
  For every $u\in\mathcal{C}^+_{a,h,L}$ we have
  \begin{equation*}
\overline{\mathcal J}_{+}(u)=h+\int_0^a(g^{**}(\dot u(x))-\dot u(x))\,dx.
\end{equation*}
 \end{lemma}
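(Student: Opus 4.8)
The plan is to prove the identity $\overline{\mathcal J}_+(u) = h + \int_0^a (g^{**}(\dot u(x)) - \dot u(x))\,dx$ for every $u \in \mathcal{C}^+_{a,h,L}$ by starting from the definition of $\overline{\mathcal J}_+$ in \eqref{scJ+} and rewriting the singular term $u'_s([0,a])$ in terms of the absolutely continuous derivative and the total variation. First I would observe that for $u \in \mathcal{C}^+_{a,h,L}$, the measure $u'$ is nonnegative with $u(0^-)=0$ and $u(a^+)=h$, so that $u'([0,a]) = h$. Splitting $u' = \dot u\,\mathcal L^1 + u'_s$ on $[0,a]$ gives $u'_s([0,a]) = h - \int_0^a \dot u(x)\,dx$. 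Substituting this into the definition $\overline{\mathcal J}_+(u) = \int_0^a g^{**}(\dot u(x))\,dx + u'_s([0,a])$ immediately yields $\overline{\mathcal J}_+(u) = \int_0^a g^{**}(\dot u(x))\,dx + h - \int_0^a \dot u(x)\,dx = h + \int_0^a (g^{**}(\dot u(x)) - \dot u(x))\,dx$, which is the claim.

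The only point requiring a little care is the justification that $u'([0,a]) = h$, i.e. that no mass of $u'$ escapes to the endpoints $0$ and $a$ or beyond. Since $u \equiv 0$ on $(-\infty, 0)$ and $u \equiv h$ on $(a, +\infty)$, and $u \in BV_{loc}(\mathbb R)$ with $u' \ge 0$, one can write $h = \lim_{\varepsilon \downarrow 0}(u(a+\varepsilon) - u(-\varepsilon)) = \lim_{\varepsilon \downarrow 0} u'((-\varepsilon, a+\varepsilon]) = u'(\{0\}) + u'((0,a)) + u'(\{a\})$ by monotone convergence (outer regularity of the finite measure $u'$ restricted to a bounded neighborhood of $[0,a]$, using that $u' \ge 0$). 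Hence $u'([0,a]) = h$. Decomposing $u'\llcorner[0,a] = \dot u \,\mathcal L^1\llcorner[0,a] + u'_s\llcorner[0,a]$ and taking total masses then gives the desired relation $u'_s([0,a]) = h - \int_0^a \dot u$.

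I do not expect any genuine obstacle here: this lemma is essentially a bookkeeping identity that trades the singular part of the derivative for the deficit $h - \int_0^a \dot u$ in the absolutely continuous part. The mild subtlety — making sure the boundary atoms at $0$ and $a$ are correctly counted inside $[0,a]$ rather than lost — is handled by the boundary conditions built into the definition of $\mathcal{C}^+_{a,h,L}$, namely $u(x) \equiv 0$ for $x < 0$ and $u(x) \equiv h$ for $x > a$, which pin down the total variation of $u$ over any interval containing $[0,a]$ to be exactly $h$.
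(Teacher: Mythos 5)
Your proof is correct and is essentially the same argument as the paper's: both reduce the claim to the identity $u'_s([0,a]) = h - \int_0^a \dot u\,dx$, which follows from the boundary conditions $u\equiv 0$ on $(-\infty,0)$, $u\equiv h$ on $(a,+\infty)$ together with the decomposition of the measure $u'$ over the closed interval $[0,a]$ (the paper splits $u'_s([0,a])$ into the interior part plus the jumps $u(0^+)$ and $h-u(a^-)$ at the endpoints, while you compute the total mass $u'([0,a])=h$ and subtract the absolutely continuous part — the same bookkeeping in a slightly different order). No gap.
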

 \begin{proof} Since $$u'_{s}([0,a])=u'_{s}((0,a))+u(0^+)+h-u(a^-)$$ and 
 $$u(a^-)-u(0^+)=\int_0^a\dot u\,dx+ u'_{s}((0,a))$$
  the result follows.
 \end{proof}
 \begin{lemma}\label{infJ+} The functional $\overline{\mathcal{J}}_+$ admits a minimizer over $\mathcal{C}^+_{a,h,L}$ and
 \begin{equation*}
\inf \mathcal{J}_{+}=\min \{\overline{\mathcal J}_{+}(u): u\in\mathcal{C}^+_{a,h,L}\}.
\end{equation*}
 \end{lemma}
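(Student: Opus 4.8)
The plan is to run the direct method directly in the enlarged class $\mathcal{C}^+_{a,h,L}$, exploiting that, by Lemma \ref{Gamma1}, $\overline{\mathcal{J}}_+$ is (up to the irrelevant limsup subtlety noted there) the lower semicontinuous envelope of $\mathcal{J}_+$ for the $w^*$-$BV_{loc}$ topology, and that minimizing sequences are compact in this topology because they consist of monotone functions. First I would record that $\inf\mathcal{J}_+<+\infty$: since $0<L<ah$, there exists a nondecreasing $u\in W^{1,1}_{loc}(\mathbb R)$ with $u\equiv 0$ on $(-\infty,0)$, $u\equiv h$ on $(a,+\infty)$, $0\le u\le h$ and $\int_0^a u=L$ — among piecewise affine nondecreasing profiles joining $(0,0)$ to $(a,h)$ the area $\int_0^a u$ takes every value in $(0,ah)$, so an intermediate value argument produces such a $u$; hence $\mathcal{B}_{a,h,L}^+\neq\emptyset$ and $\mathcal{J}_+$ is finite there.

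Fix then a minimizing sequence $(u_j)\subset\mathcal{B}_{a,h,L}^+$, so that $\mathcal{J}_+(u_j)\to\inf\mathcal{J}_+<+\infty$. Each $u_j$ is nondecreasing, with $0\le u_j\le h$ and constant outside $[0,a]$, hence $|u_j'|(\mathbb R)=h$ and $\|u_j\|_{L^1(-R,R)}\le 2Rh$ for every $R>0$. By the compactness of $BV$ on bounded intervals together with a diagonal argument over $R\to\infty$, we may extract a (not relabeled) subsequence with $u_j\to u$ in $L^1_{loc}(\mathbb R)$ and in $w^*$-$BV_{loc}(\mathbb R)$. Since $\mathcal{J}_+(u_j)$ is bounded, Lemma \ref{Gamma1} a) applies and yields both $u\in\mathcal{C}^+_{a,h,L}$ and
\[
\overline{\mathcal{J}}_+(u)\le\liminf_{j\to\infty}\mathcal{J}_+(u_j)=\inf\mathcal{J}_+ ,
\]
so in particular $\inf\{\overline{\mathcal{J}}_+(v):v\in\mathcal{C}^+_{a,h,L}\}\le\overline{\mathcal{J}}_+(u)\le\inf\mathcal{J}_+$.

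For the reverse inequality, let $v\in\mathcal{C}^+_{a,h,L}$ be arbitrary: by Lemma \ref{Gamma1} b) there is a sequence $(v_j)\subset BV_{loc}(\mathbb R)$ with $\limsup_{j\to\infty}\mathcal{J}_+(v_j)\le\overline{\mathcal{J}}_+(v)$, and since $\inf\mathcal{J}_+\le\mathcal{J}_+(v_j)$ for every $j$ we obtain $\inf\mathcal{J}_+\le\overline{\mathcal{J}}_+(v)$; taking the infimum over $v\in\mathcal{C}^+_{a,h,L}$ gives $\inf\mathcal{J}_+\le\inf\{\overline{\mathcal{J}}_+(v):v\in\mathcal{C}^+_{a,h,L}\}$. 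Chaining the two inequalities,
\[
\inf\mathcal{J}_+=\overline{\mathcal{J}}_+(u)=\min\{\overline{\mathcal{J}}_+(v):v\in\mathcal{C}^+_{a,h,L}\},
\]
so $u$ is the desired minimizer of $\overline{\mathcal{J}}_+$ over $\mathcal{C}^+_{a,h,L}$ and the stated identity of infima holds.

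The only genuinely delicate point is the passage to the limit: the limit $u$ of functions in $\mathcal{B}_{a,h,L}^+$ need not remain in $\mathcal{B}_{a,h,L}^+$, because the derivative can concentrate — in particular at the endpoints $x=0$ and $x=a$. This is exactly why one is forced to work in the relaxed class $\mathcal{C}^+_{a,h,L}$ and why $\overline{\mathcal{J}}_+$ must carry the boundary term $u'_s([0,a])$; the lower semicontinuity statement of Lemma \ref{Gamma1} a) already absorbs this concentrated part and guarantees that the limit lies in $\mathcal{C}^+_{a,h,L}$, which is what makes the direct method close.
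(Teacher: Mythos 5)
Your proof is correct and follows essentially the same route as the paper's: a minimizing sequence in $\mathcal{B}^+_{a,h,L}$ is equibounded in $BV_{loc}$ by monotonicity and the uniform bound $0\le u_j\le h$, a $w^*$-$BV_{loc}$ limit $u\in\mathcal{C}^+_{a,h,L}$ is extracted, and the two inequalities of Lemma \ref{Gamma1} close the argument exactly as you describe. The extra details you supply (nonemptiness of $\mathcal{B}^+_{a,h,L}$ and the remark on concentration of the derivative at the endpoints) are consistent with, and slightly more explicit than, the paper's exposition.
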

 \begin{proof} Let $(u_{j})\subset \mathcal{B}^{+}_{a,h,L}$ be a sequence such that $\mathcal{J}_{+}(u_{j})= \inf \mathcal{J}_{+}+o(1)$ as $j\to+\infty$. Since $\dot u_{j}\ge 0,\ u_{j}(x)\equiv 0$ if $x\le 0,$ $u_{j}(x)\equiv h$ if $x\ge a$ then $u_{j}$ are equibounded in $BV_{loc}(\mathbb R)$
 hence there exists $u\in \mathcal{C}^+_{a,h,L}$ such that , up to subsequences, $u_{j}\to u$ in  $w^{*}-BV_{loc}(\mathbb R)$.
 By a)  of Lemma \ref{Gamma1} we get 
 \begin{equation*}
\displaystyle \inf \mathcal{J}_{+}= \liminf_{j\to \infty} \mathcal J_{+}(u_{j})\ge \overline{\mathcal J}_{+}(u)
\end{equation*}
and by b) of  Lemma \ref{Gamma1} for any other $\widetilde u\in \mathcal{C}^+_{a,h,L}$there exists $\widetilde u_{j}\in  \mathcal{B}^{+}_{a,h,L}$ such that 
\begin{equation*}
\displaystyle\limsup_{j\to \infty} \mathcal J_{+}(\widetilde u_{j})\le \overline{\mathcal J}_{+}(\widetilde u).
\end{equation*}
Therefore,
$$ \overline{\mathcal J}_{+}(u)+o(1)\le \inf \mathcal{J}_{+}+o(1)\le  \mathcal J_{+}(\widetilde u_{j})+o(1)$$
and by taking the limit the result is proved.
\end{proof}
 We need now some fine properties of minimizers of $\overline{\mathcal J}_{+}$. To this aim we introduce for $\epsilon>0$ the  penalized functionals
 \begin{equation*}\begin{aligned}
\mathcal J_{\epsilon}(u):= \overline{\mathcal J}_{+}(u)+\int_0^a\epsilon \dot u^{2}\,dx
+\frac1\epsilon\int_0^a(u_{-}^{2}+(u-h)_{+}^{2}+\dot u_{-}^{2})\,dx
+\frac1\epsilon{\left(\int_0^a(0\vee u\wedge h\BBB)\,dx-L\right)}^2, 
\end{aligned}
\end{equation*}
defined for $u\in\mathcal H$ and extended with value $+\infty$ if $u\in BV_{loc}(\mathbb R)\setminus\mathcal H$,
where 
$$\mathcal H:=\{u\in W^{1,2}_{loc}(\mathbb R): u(x)\equiv 0\ \hbox{if} \ x< 0 ,\, u(x)\equiv h\ \hbox{if} \ x> a\}.$$
Minimizing sequences for $\mathcal J_\eps$ are equibounded in $W^{1,2}_{loc}(\mathbb R)$, therefore (up to subsequences) converging weakly in $W^{1,2}_{loc}(\mathbb R)$ and strongly in $L^2_{loc}(\mathbb R)$. By taking into account the convexity and nonnegativity of $x\mapsto x^2_-$ and $x\mapsto g^{**}(x)$, it is readily seen that the limit points minimize   $\mathcal J_\eps$ over $\mathcal{H}$. We next show that Lemma \ref{Gamma1} holds also for $\mathcal J_\eps$. 
\begin{lemma}\label{gammaconv} Let $\epsilon_{j}\to 0$ be a decreasing sequence, then\\
{\rm a)} 
for every $u\in BV_{loc}(\mathbb R)$ and every sequence $(u_j)\subset BV_{loc}(\mathbb R)$ such that $u_{j}\to u$ in $w^{*}- BV_{loc}(\mathbb R)$, there holds
\begin{equation*}
\displaystyle\liminf_{j\to \infty} \mathcal J_{\epsilon_{j}}(u_{j})\ge \overline{\mathcal J}_{+}(u);
\end{equation*}
{\rm b)} for every $u\in BV_{loc}(\mathbb R)$ there exists a sequence $(u_{j})\subset BV_{loc}(\mathbb R)$ such that 
\begin{equation*}
\displaystyle\limsup_{j\to \infty} \mathcal J_{\epsilon_{j}}(u_{j})\le \overline{\mathcal J}_{+}(u).
\end{equation*}
\end{lemma}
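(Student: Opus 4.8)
The plan is to prove Lemma \ref{gammaconv} by comparing $\mathcal J_{\eps_j}$ with $\mathcal J_+$ (or rather $\overline{\mathcal J}_+$) and invoking Lemma \ref{Gamma1}. For part a), suppose $u_j\to u$ in $w^*$-$BV_{loc}(\mathbb R)$ and, without loss of generality, $\mathcal J_{\eps_j}(u_j)$ is bounded. Then each term in the definition of $\mathcal J_{\eps_j}(u_j)$ is nonnegative, so in particular $\int_0^a\eps_j\dot u_j^2\,dx$ is bounded and the penalization terms $\frac1{\eps_j}\int_0^a(u_{j,-}^2+(u_j-h)_+^2+\dot u_{j,-}^2)\,dx$ and $\frac1{\eps_j}(\int_0^a(0\vee u_j\wedge h)\,dx-L)^2$ are bounded; dividing by $1/\eps_j$ and passing to the limit we obtain $u_-=0$, $(u-h)_+=0$ and $\dot u_-=0$ a.e., i.e. $0\le u\le h$ and $u'\ge 0$ (the singular part of $u'$ is nonnegative by $w^*$ lower semicontinuity of the variation restricted to nondecreasing competitors, since each $u_j$ is nondecreasing after the limit forces it—more precisely, $\dot u_{j,-}\to 0$ in $L^2$ forces $\dot u\ge 0$, and $u_{j,-}\to 0$, $(u_j-h)_+\to 0$ in $L^2$ give $0\le u\le h$), together with $\int_0^a u=L$; hence $u\in\mathcal C^+_{a,h,L}$. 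Since $\mathcal J_{\eps_j}(u_j)\ge\overline{\mathcal J}_+(u_j)+\int_0^a\eps_j\dot u_j^2\ge\overline{\mathcal J}_+(u_j)$, and formally $\overline{\mathcal J}_+(u_j)$ should be read via $\mathcal J_+(u_j)$ when $u_j\in\mathcal B^+_{a,h,L}$—but $u_j\in\mathcal H$ need not lie in $\mathcal B^+_{a,h,L}$—we instead bound $\mathcal J_{\eps_j}(u_j)$ from below directly by $\int_0^a g^{**}(\dot u_j)\,dx+(u_j)'_s([0,a])$ which is just $\int_0^a g^{**}(\dot u_j)\,dx$ since $u_j\in W^{1,2}_{loc}$ has no singular part, and then apply the liminf inequality \eqref{liminf} of Lemma \ref{Gamma1}(a) after noting $\liminf_j\mathcal J_+(u_j)\ge\overline{\mathcal J}_+(u)$; the extra penalization and viscosity terms only increase the functional, so $\liminf_j\mathcal J_{\eps_j}(u_j)\ge\liminf_j\mathcal J_+(u_j)\ge\overline{\mathcal J}_+(u)$.

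For part b), it suffices by the usual density argument to treat $u\in\mathcal C^+_{a,h,L}$ (otherwise $\overline{\mathcal J}_+(u)=+\infty$ and there is nothing to prove). Applying Lemma \ref{Gamma1}(b) we obtain a sequence $(v_k)\subset BV_{loc}(\mathbb R)$ with $\limsup_k\mathcal J_+(v_k)\le\overline{\mathcal J}_+(u)$; inspecting its proof, the $v_k$ can be taken in $\mathcal B^+_{a,h,L}$, i.e. piecewise affine nondecreasing functions with $0\le v_k\le h$, $v_k\equiv 0$ on $(-\infty,0)$, $v_k\equiv h$ on $(a,+\infty)$, and $\int_0^a v_k=L$. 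For such functions $v_k\in W^{1,2}_{loc}(\mathbb R)=\mathcal H$, so $(v_k)'_s\equiv 0$, $(v_k)_-=0$, $(v_k-h)_+=0$, $\dot{(v_k)}_-=0$, and $\int_0^a(0\vee v_k\wedge h)\,dx-L=0$; therefore all penalization terms vanish and $\mathcal J_{\eps_j}(v_k)=\int_0^a g^{**}(\dot v_k)\,dx+\eps_j\int_0^a\dot v_k^2\,dx=\mathcal J_+(v_k)+\eps_j\int_0^a\dot v_k^2\,dx$. For each fixed $k$, since $v_k$ is piecewise affine the quantity $\int_0^a\dot v_k^2\,dx$ is a finite constant $C_k$, so $\mathcal J_{\eps_j}(v_k)\to\mathcal J_+(v_k)$ as $j\to\infty$. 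A diagonal extraction then produces the desired recovery sequence: choose $j\mapsto k(j)$ increasing slowly enough that $\eps_j C_{k(j)}\to 0$ and $\mathcal J_+(v_{k(j)})\le\overline{\mathcal J}_+(u)+o(1)$, and set $u_j:=v_{k(j)}$, so that $\limsup_j\mathcal J_{\eps_j}(u_j)\le\limsup_k\mathcal J_+(v_k)\le\overline{\mathcal J}_+(u)$.

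The main obstacle I anticipate is the bookkeeping in part a) to make sure the liminf inequality from Lemma \ref{Gamma1}(a) applies: strictly speaking that lemma is stated for sequences in $\mathcal B^+_{a,h,L}$, whereas the minimizing sequences for $\mathcal J_{\eps_j}$ only live in $\mathcal H$ and merely satisfy the constraints asymptotically. The cleanest fix is to observe that for $u_j\in\mathcal H$ with finite $\mathcal J_{\eps_j}(u_j)$ one has $\overline{\mathcal J}_+(\tilde u_j)\le\mathcal J_{\eps_j}(u_j)+o(1)$ where $\tilde u_j$ is the truncation/rearrangement $0\vee u_j\wedge h$ suitably corrected to have integral exactly $L$ (the correction costs $O(\sqrt{\eps_j\cdot\mathcal J_{\eps_j}(u_j)})\to 0$ because the area-penalization term controls $|\int_0^a(0\vee u_j\wedge h)-L|$), and that $\tilde u_j\to u$ in $w^*$-$BV_{loc}(\mathbb R)$ as well; then one genuinely applies \eqref{liminf}. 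Apart from this technical point, the proof is a routine comparison argument, so I would keep the exposition short, possibly even remarking that the viscosity term $\eps\int_0^a\dot u^2$ and the penalizations are designed precisely so that $\mathcal J_{\eps_j}$ sits above $\overline{\mathcal J}_+$ up to vanishing errors while becoming coercive in $W^{1,2}_{loc}$.
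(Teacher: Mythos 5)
Your proof is correct and uses the same essential ingredients as the paper, but organizes them differently enough to be worth comparing. For part b) the paper rebuilds a recovery sequence from scratch: it concentrates $u'_s$ into atoms at $a/3$ and $a/2$ as in Lemma \ref{Gamma1}, then replaces the jumps (and the boundary discontinuities) by affine ramps of width $\delta_j$ chosen with $\eps_j\delta_j^{-1}\to 0$, so that the viscous term $\eps_j\int_0^a\dot u_j^2=O(\eps_j\delta_j^{-1})$ vanishes. Your diagonal extraction over the recovery sequence already furnished by Lemma \ref{Gamma1}(b) reaches the same conclusion with less bookkeeping; the only point genuinely requiring an inspection of the earlier proof is the one you identify, namely that those competitors are piecewise affine, hence lie in $\mathcal H$ and annihilate every penalization term, so that $\mathcal J_{\eps_j}(v_k)=\mathcal J_+(v_k)+\eps_j\int_0^a\dot v_k^2$ with the last integral finite for each fixed $k$. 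For part a) the paper disposes of the claim in one line (``straightforward by sequential lower semicontinuity of $\overline{\mathcal J}_+$''), and your version is more explicit about why the penalizations force the $w^*$-$BV_{loc}$ limit into $\mathcal C^+_{a,h,L}$, which is a genuine improvement in readability. The one step you should streamline is the appeal to Lemma \ref{Gamma1}(a): as you yourself note, that lemma presupposes $\mathcal J_+(u_j)<+\infty$, i.e.\ $u_j\in\mathcal B^+_{a,h,L}$, and your proposed repair via the truncation $0\vee u_j\wedge h$ does not restore monotonicity, so it does not by itself bring the sequence into the admissible class. The clean route---which is what the paper's one-line argument implicitly invokes---is to bypass $\mathcal J_+$ entirely: bound $\mathcal J_{\eps_j}(u_j)\ge\int_0^a g^{**}(\dot u_j)\,dx$ and apply the lower semicontinuity theorem of \cite{B2}, \cite{AD} to this unconstrained convex integral, whose $w^{*}$-$BV_{loc}$ lower limit is $\int_0^a g^{**}(\dot u)\,dx$ plus the recession function $z\mapsto z_+$ evaluated on the singular part; since your preliminary step already shows $u'_s\ge 0$ and $u\in\mathcal C^+_{a,h,L}$, this lower limit is exactly $\overline{\mathcal J}_+(u)$. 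With that adjustment the argument is complete.
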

\begin{proof} a) is straightforward by sequential lower semicontinuity of $ \overline{\mathcal J}_{+}(u)$ an b) is obvious if $u\not\in \mathcal{C}^+_{a,h,L}$. If $u\in \mathcal{C}^+_{a,h,L}$, 
we
  choose $c_{1}, c_{2}\ge 0$ 
  such that \eqref{C1C2} holds.
We define $\widetilde u\in BV_{loc}(\mathbb R)$ as in \eqref{widetildeu}:
as seen in the proof of Lemma \ref{Gamma1}, there holds  $\widetilde u(0+)=0,\ {\widetilde u}'_{s}([0,a])=u'_{s}([0,a])$, hence $\overline{\mathcal J}_{+}(u)=\overline{\mathcal J}_{+}(\widetilde u)$ and it is now enough to approximate $\overline{\mathcal{J}}_+(\widetilde u)$. We let $\delta_{j}\to 0^+$ such that $ \epsilon_{j}\delta_{j}^{-1}\to 0$ and we define
\begin{equation*}
\widetilde u_{j}(x)=\left\{\begin{array}{ll} x\delta_{j}^{-1}\widetilde u(\delta_{j})\quad &\mbox{if }\ 0\le x\le \delta_{j}\\
\textstyle\delta_j^{-1}{(\widetilde u(\frac{a}{3}^+)-\widetilde u(\frac{a}{3}-\delta_{j}))}\,(x-\frac{a}{3}+\delta_{j})+\widetilde u(\frac{a}{3}-\delta_{j})\quad &\mbox{if }\ -\delta_{j}\le x-\frac{a}{3}\le 0\\
\textstyle\delta^{-1}_j{(\widetilde u(\frac{a}{2}^+)-\widetilde u(\frac{a}{2}-\delta_{j}))}\,(x-\frac{a}{2}+\delta_{j})+\widetilde u(\frac{a}{2}-\delta_{j})\quad &\mbox{if }\ -\delta_{j}\le x-\frac{a}{2}\le 0\\
 \delta_j^{-1}{(h-\widetilde u(a-\delta_{j}))}\,(x-a)+h\quad &\mbox{if } \ a-\delta_{j}\le x\le a\\
\widetilde u(x)\quad &\mbox{otherwise in  $\mathbb R$}.
\end{array}\right.
\end{equation*}
Then $ \widetilde u_j\in\mathcal{H}$ and  $\limsup_{j\to\infty} \mathcal J_{\epsilon_j}(\widetilde u_j)\le\overline{\mathcal{J}}_+(\widetilde u)$  follows by arguing as in Lemma \ref{Gamma1}.
\end{proof}

The next lemma introduces the Euler-Lagrange equation for functional $\overline{\mathcal J}_+$, which will be a key step for the proof of Theorem \ref{mainbis}.

\begin{lemma}\label{minJbar} Let $j\in\mathbb{N}$. Let $\epsilon_{j}\to 0$ be a decreasing sequence and let $u_{j}\in \argmin_{\mathcal H} \mathcal J_{\epsilon_{j}}$. Then:\\
i) $\dot u_{j}$ is continuous and  monotone in $(0,a)$;\\
ii) ${\dot u}_{j}\ge 0$ a.e. in $(0, a)$ and  $\int_0^a\dot u_j=h$ for any $j\in\mathbb{N}$;\\
iii) there exists a (not relabeled) subsequence $(u_j)$ such that $u_{j}\to u_{*}$ in $w^{*}- BV_{loc}(\mathbb R)$ as $j\to+\infty$ and $u_{*}$ minimizes $ \overline{\mathcal J}_{+}$ over ${\mathcal{C}^+_{a,h,L}}$;\\
iv) either $\dot u_{*} \ge 1$ a.e. in $(0,a)$ or  $u_{*}\in W^{1,\infty}(0,a)$ with $0\le {\dot u}_{*}\le 1$ a.e. in $(0,a)$ and in the latter case we have
for suitable $\overline\lambda,\ \overline\mu\in \mathbb R$
\begin{equation*}
g'(\dot u_{*} )=\overline\lambda x+\overline\mu\ \ \hbox{\ a.e. in }\ (0,a).
\end{equation*}
\end{lemma}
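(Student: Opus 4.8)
The strategy is to exploit that each $u_j$ minimizes the smooth, strictly convex-in-$\dot u$ penalized functional $\mathcal J_{\epsilon_j}$ over the affine-constrained space $\mathcal H$, so it solves an Euler–Lagrange equation, and then to pass to the limit using the $\Gamma$-convergence-type statement of Lemma \ref{gammaconv} together with the representation of Lemma \ref{representationlemma}. First I would derive the Euler–Lagrange equation: since $u_j\in\mathcal H$ and all the penalization terms are differentiable (note $g^{**}\in C^1(\mathbb R)$, because $g'(1)=1$ matches the slope of $z\mapsto z-\tfrac12$), for any test function $\varphi\in W^{1,2}_{loc}(\mathbb R)$ supported in $(0,a)$ the first variation gives
\[
\int_0^a\Big(g^{**}{}'(\dot u_j)-1+2\epsilon_j\dot u_j-\tfrac2{\epsilon_j}(\dot u_j)_-\Big)\dot\varphi\,dx
+\tfrac2{\epsilon_j}\int_0^a\big((u_j)_- -(u_j-h)_+ + c_j\big)\varphi\,dx=0,
\]
where $c_j=\tfrac2{\epsilon_j}\big(\int_0^a(0\vee u_j\wedge h)\,dx-L\big)$ is a constant. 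From the weak form one reads off that the bracketed flux $F_j(x):=g^{**}{}'(\dot u_j)-1+2\epsilon_j\dot u_j-\tfrac2{\epsilon_j}(\dot u_j)_-$ is $W^{1,1}$ in $(0,a)$ with derivative equal to the bulk term, hence continuous; since $z\mapsto g^{**}{}'(z)-1+2\epsilon_j z-\tfrac2{\epsilon_j}z_-$ is a continuous strictly increasing bijection of $\mathbb R$ (strictly increasing because $g^{**}$ is convex and the added linear-in-$z$ pieces are strictly increasing), its inverse is continuous and strictly increasing, so $\dot u_j = (\text{increasing continuous})^{-1}(F_j)$ is continuous on $(0,a)$. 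For monotonicity in part i): differentiate once more — away from the set $\{u_j<0\}\cup\{u_j>h\}$ the bulk term equals the constant $\tfrac{2c_j}{\epsilon_j}$, so $F_j$ is affine there; and one shows $\{0<u_j<h\}$ is actually the whole interval for $\epsilon_j$ small by using the uniform energy bound $\mathcal J_{\epsilon_j}(u_j)\le\mathcal J_{\epsilon_j}(v)$ for a fixed competitor $v\in\mathcal B^+_{a,h,L}$, which forces $\int_0^a((u_j)_-^2+(u_j-h)_+^2+(\dot u_j)_-^2)\,dx=O(\epsilon_j)$ and $c_j$ bounded; monotone $F_j$ then yields monotone $\dot u_j$.

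For part ii), the same uniform bound gives $\int_0^a(\dot u_j)_-^2\,dx\to0$; combined with the monotonicity of $\dot u_j$ from i) and the fact that $\dot u_j$ cannot be negative on a set of positive measure without contradicting $\int_0^a\dot u_j\,dx = u_j(a^-)-u_j(0^+)\to h>0$ together with monotonicity, one upgrades this to $\dot u_j\ge0$ everywhere on $(0,a)$ for $j$ large (a monotone continuous function that is nonnegative in $L^2$-limit and has positive integral must be $\ge 0$ throughout, because if it dipped below zero it would do so on an initial subinterval of non-vanishing length). Once $\dot u_j\ge0$ a.e., the penalization terms $(u_j)_-$ and $(u_j-h)_+$ vanish (as $0\le u_j\le h$ follows from monotonicity and the boundary values), and minimality against the rescaled competitor enforces exactly $\int_0^a\dot u_j\,dx=h$, i.e.\ the area-type constraint becomes active and $u_j\in\mathcal B^+_{a,h,L}$. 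For part iii): the functions $u_j$ are then nondecreasing, equibounded between $0$ and $h$ and equal to the fixed tails outside $(0,a)$, hence equibounded in $BV_{loc}(\mathbb R)$, so a subsequence converges in $w^*$-$BV_{loc}$ to some $u_*\in\mathcal C^+_{a,h,L}$; applying part a) of Lemma \ref{gammaconv} along this subsequence and part b) against an arbitrary competitor, exactly as in the proof of Lemma \ref{infJ+}, shows $\overline{\mathcal J}_+(u_*)=\inf\mathcal J_+=\min_{\mathcal C^+}\overline{\mathcal J}_+$.

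For part iv), I pass to the limit in the Euler–Lagrange equation. Using Lemma \ref{representationlemma}, on the set where $0\le\dot u_*\le 1$ we have $g^{**}{}'(\dot u_*)=g'(\dot u_*)$, so writing the weak form for $u_j$ and letting $j\to\infty$ — the terms $2\epsilon_j\dot u_j\dot\varphi$ and $\tfrac2{\epsilon_j}(\dot u_j)_-\dot\varphi$ vanish in the limit by the $O(\epsilon_j)$ and $O(\epsilon_j^2)$ bounds just obtained, and $c_j\to$ some finite $\overline\lambda/2$ up to sign by the uniform bound — one gets that $g'(\dot u_*)$ is affine, $g'(\dot u_*(x))=\overline\lambda x+\overline\mu$ a.e. To get the dichotomy itself, observe $\dot u_*$ is a monotone limit of the monotone $\dot u_j$, hence monotone, so it has one-sided limits; if $\dot u_*$ takes a value $<1$ at some point then, because $g^{**}{}'$ is strictly increasing on $[0,1]$ and constant-slope beyond, the affine relation pins $\dot u_*$ to the interval $[0,1]$ on all of $(0,a)$ where the relation is non-degenerate, giving $u_*\in W^{1,\infty}$ with $0\le\dot u_*\le1$; otherwise $\dot u_*\ge1$ a.e. The main obstacle I expect is precisely establishing rigorously that for $j$ large the penalization forces $0\le u_j\le h$ with $\dot u_j\ge0$ on the \emph{entire} open interval (not just a.e.) and that the Lagrange-type constant $c_j$ stays bounded — this is where one must combine the monotonicity from i), the $O(\epsilon_j)$ energy estimate, and a careful comparison argument; everything downstream (continuity of the flux, inversion, limit in the PDE) is then routine.
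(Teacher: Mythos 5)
Your overall strategy (Euler--Lagrange equation for the penalized functional, then passage to the limit via Lemma \ref{gammaconv}) is the same as the paper's, but the three steps that you defer to ``the uniform energy bound'' are genuine gaps, and that bound is the wrong tool for all of them. First, to make the right-hand side of the Euler--Lagrange equation affine --- which is what gives continuity and monotonicity of $\dot u_j$ in part i) --- you need the pointwise inequality $0\le u_j\le h$ on $(0,a)$ for each \emph{fixed} $j$, so that the bulk penalization terms $(u_j)_-$ and $(u_j-h)_+$ vanish identically. An estimate of the form $\int_0^a((u_j)_-^2+(u_j-h)_+^2)\,dx=O(\epsilon_j^2)$ is small but not zero and cannot yield a pointwise identity at fixed $j$. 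The paper instead observes that $0\vee u_j\wedge h$ is an admissible competitor in $\mathcal H$ with strictly smaller energy whenever the constraint is violated (truncation leaves $\int_0^a(0\vee u\wedge h)$ unchanged and decreases every other term), which gives $0\le u_j\le h$ exactly, for every $j$. Second, the same objection applies to $\dot u_j\ge0$: your argument is asymptotic and at best gives the conclusion for $j$ large, whereas the statement requires it for every $j$; moreover a monotone function can dip below zero on a short interval while keeping its negative part small in $L^2$, so the argument does not close even asymptotically. The paper's proof is exact: on a maximal interval $(\alpha_j,\beta_j)\subset(0,a)$ where $\dot u_j<0$ one has $(g^{**})'(\dot u_j)=0$, equation \eqref{eulag} reduces to $(2\epsilon_j^{-1}+2\epsilon_j)\dot u_j=\lambda_jx+\mu_j$, and evaluating at the endpoints, where $\dot u_j=0$ by continuity, forces $\lambda_j=\mu_j=0$ and hence $\dot u_j\equiv0$ there, a contradiction.

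Third, your claim that the multiplier $c_j=2\epsilon_j^{-1}\big(\int_0^a(0\vee u_j\wedge h)\,dx-L\big)$ is bounded ``by the uniform energy bound'' is false as stated: that bound only gives $\big|\int_0^a u_j-L\big|\le C\sqrt{\epsilon_j}$, i.e.\ $|c_j|=O(\epsilon_j^{-1/2})$. The paper obtains compactness of $(\lambda_j,\mu_j)$ by a different device: since $0\le(g^{**})'\le1$ and $\dot u_j\ge0$, the equation yields $0\le\lambda_jx+\mu_j\le1+2\epsilon_j\dot u_j$, and integrating this over $[0,a]$ and over $[0,a/3]$ produces two independent linear inequalities from which both $\lambda_j$ and $\mu_j$ are bounded. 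Finally, your derivation of the dichotomy in iv) is circular: you invoke the affine relation $g'(\dot u_*)=\overline\lambda x+\overline\mu$ to conclude that $\dot u_*$ is pinned to $[0,1]$, but that relation is only available once one already knows $0\le\dot u_*\le1$ (on $\{\dot u_j>1\}$ the equation only says $(g^{**})'(\dot u_j)=1$, which carries no information on $\dot u_j$ itself). The paper's route is to use monotonicity to write $\{\dot u_j>1\}=(s_j,a)$, pass to the limit $s_j\to s$, and distinguish $s<a$ (forcing $\overline\lambda=0$, $\overline\mu=1$ and $\dot u_*\ge1$ a.e.) from $s=a$ (giving $0\le\dot u_*\le1$, then a.e.\ convergence of $\dot u_j$ by convexity, and finally the Euler equation in the limit). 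These are the missing ideas you would need to supply.
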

\begin{proof} Let $u_{j}\in \argmin \mathcal J_{\epsilon_{j}}$. Then $0\le u_j\le h$ in $\mathbb{R}$ (indeed, if this was not the case, $0\vee u_j\wedge h$ would provide a lower value for $\mathcal J_{\eps_j}$). Since $0\le u_j\le h$,  by the Du-Bois-Raymond equation, there exist a real constant $\mu_{j}$ such that
\begin{equation}\label{eulag}
h_{j}(\dot u_{j}):= -2\epsilon_{j}^{-1}\dot u_{j}^{-}+2\epsilon_{j}\dot u_{j}+(g^{**})'(\dot u_{j})=\lambda_{j}x+\mu_{j},\qquad\mbox{  $x\in(0,a)$ }
\end{equation}
where $\lambda_{j}=2\epsilon_{j}^{-1}((\int_0^a u_{j})-L)$.  Since $h_j$ is a continuous strictly increasing function, from \eqref{eulag} we have  $\dot u_{j}= h_{j}^{-1}(\lambda_{j}x+\mu_{j})$ and we see that
$\dot u_{j}$ is continuous and monotone on the whole $(0,a)$  thus proving i).

If $|\{\dot u_{j} < 0\}|> 0$ then there exists an interval $[\alpha_{j},\beta_{j}]\subset [0,a]$ such that $\dot u_{j}< 0$ in $(\alpha_{j},\beta_{j})$.  Since $0>\int_{\alpha_j}^{\beta_j}\dot u_j=u_j(\beta_j)-u_j(\alpha_j)$ and since $u_j(0)=0\le u(x) \le h=u_j(a)$ in $\mathbb{R}$, we can exclude both $\alpha_j=0$ and $\beta_j=h$. Therefore $0<\alpha_j<\beta_j<h$ and $\dot u_{j}^{+}(\alpha_{j})=\dot u_{j}^{-}(\beta_{j})=0$, hence
\begin{equation*}
2\epsilon_{j}^{-1}\dot u_{j}+2\epsilon_{j}\dot u_{j}=\lambda_{j}x+\mu_{j}
\end{equation*}
in $(\alpha_{j},\beta_{j})$ which implies $\lambda_{j}\alpha_{j}+\mu_{j}=\lambda_{j}\beta_{j}+\mu_{j}=0$, that is $\lambda_{j}=\mu_{j}=0$ so $\dot u_{j}\equiv 0$ in $(\alpha_{j},\beta_{j})$, a contradiction. Since ${\dot u}_{j}\ge 0$ a.e. in $(0, a)$ we get
\begin{equation*}
0\le \int_0^a |\dot u_{j}|=\int_0^a\dot u_{j}=h
\end{equation*}
and ii) is proven.

By ii) we get, up to subsequences, that $u_{j}\to u_{*}$ in $w^{*}- BV_{loc}(\mathbb R)$ and by point a) of Lemma \ref{gammaconv}
\begin{equation*}
\displaystyle\liminf_{j\to \infty} \mathcal J_{\epsilon_{j}}(u_{j})\ge \overline{\mathcal J}_{+}(u_{*}).
\end{equation*}
If  now $u\in \mathcal{C}^+_{a,h,L}$, we construct $\widetilde{u}_j$ from $u$ as done in the proof of Lemma  \ref{gammaconv}, which then entails along with the  minimality of $u_j$  
\begin{equation*}
\displaystyle\limsup_{j\to \infty} \mathcal J_{\epsilon_{j}}(u_{j})\le \limsup_{j\to \infty} \mathcal J_{\epsilon_{j}}(\widetilde u_{j}) 
\le \overline{\mathcal J}_{+}(u).
\end{equation*}
Hence, $\overline{\mathcal J}_{+}(u_{*})\le \overline{\mathcal J}_{+}(u)$ and iii) is proven.

We eventually prove iv). Since $(g^{**})'(\dot u_{j})\le 1$ by \eqref{eulag} and ii) we get
$$ 0\le \lambda_{j}x+\mu_{j}\le 1+2\epsilon_{j}\dot u_{j},$$
hence by integrating both members of previous inequality in $[0,a]$ and in $[0,a/3]$ and by assuming without restriction that $2h\epsilon_{j}\le a$ we get
\begin{equation*}
 0\le a\lambda_{j}+2\mu_{j}\le 4\qquad\mbox{and}\qquad 0\le a\lambda_{j}+6\mu_{j}\le 24.
\end{equation*}
Then,  by taking into account  ii) we have, up to subsequences, $\lambda_{j}\to \overline \lambda, \ \mu_{j}\to \overline \mu$ and $\epsilon_{j}\dot u_{j}\to 0$ in $L^{1}(0,a)$ so by recalling \eqref{eulag} we get
\begin{equation}\label{eulag2}
1=(g^{**})'(\dot u_{j})=\lambda_{j}x+\mu_{j}-2\epsilon_{j}\dot u_{j}\qquad\mbox{on the set $\{\dot u_{j}> 1\}$}
\end{equation}
 and $\lambda_{j}x+\mu_{j}-2\epsilon_{j}\dot u_{j}\to  \overline \lambda x+\overline \mu$ in $L^{1}(0,a)$. By i) $\dot u_{j}$ is monotone and continuous  and without restriction we may assume (up to subsequences) that $\{\dot u_{j}> 1\}=(s_{j},a)$ for some $s_{j}\in (0,a)$ and that $s_{j}\to s\in [0,a]$.
 
 If $s< a$ then by \eqref{eulag2} we get 
$\overline \lambda x+\overline \mu\equiv 1$ $\hbox{ in }$  $(s,a)$,
that is $\overline \mu=1,\ \overline \lambda=0$.
 Therefore, since
 \begin{equation*}
 (g^{**})'(\dot u_{j}){\bf 1}_{(0,s_{j})}=(\lambda_{j}x+\mu_{j}-2\epsilon_{j}\dot u_{j}){\bf 1}_{(0,s_{j})},
\end{equation*}
by taking into account the form of $(g^{**})'$ and the fact that $\dot u_j\le 1$ on $(0,s_j)$, \BBB
we get $\dot u_{j}\to 1$ a.e. on each compact subset of $(0,s)$ that is $u'_{*}=\dot u_{*}=1$ a.e on  $(0,s)$. On the other hand 
since for $j$ large enough $\dot u_{j} >1$ on each compact subset of $(s,a)$ we get $\dot u_{*}\ge 1$ a.e. on $(s,a)$ thus proving that $\dot u_{*}\ge 1$ a.e. on $(0,a)$ in this case.

If $s=a$ then $|\{\dot u_{j}> 1\}|\to 0$  and for every  $0<\beta<a$, we have $0\le \dot u_{j}\le 1$ in $(0,\beta)$  for $j$ large enough.  Thus (up to subsequences), we find $v\in L^\infty((0,\beta))$ with $\|v\|_{L^\infty((0,\beta))}\le 1$  such that  $\dot u_{j}\to v$ in $w^{*}-L^{\infty}((0, \beta))$, so $u'_{*}=v= \dot u_{*}$ on $(0, \beta)$. This holds for every $0< \beta<a$, that is,  $u'_{*}=\dot u_{*}$ on  $(0,a)$ and $u_{*}\in W^{1,\infty}(0,a), \ 0\le \dot u_{*}\le 1$ a.e. in $(0,a)$.

In addition by recalling that $\dot u_{j}\to \dot u_{*}$ in $w^{*}-L^{\infty}((0, \beta))$ and $u_{j}(x)=\int_{0}^{\beta}\dot u_{j}(t){\bf 1}_{(0,x)}\,dt$ for every $x\in (0,\beta)$ we get $u_{j}(x)\to u_{*}(x)$ in $(0,\beta)$
which, by taking into account that $u_{j}$ is convex, entails $\dot u_{j}\to \dot u_{*}$ a.e. in $(0, \beta)$ and iv) completely follows from 
\eqref{eulag} by passing to the limit as $j\to \infty$. 
\end{proof}

Next we  discuss property {\it iv)} of Lemma \ref{minJbar} in relation to the parameters range.

\begin{lemma}\label{moreproperties} 
There exists a minimizer $u_{*}$ of  $\overline{\mathcal J}_{+}$ over ${\mathcal{C}^+_{a,h,L}}$ such that $u_{*}\in W^{1,\infty}(0,a)$ and $0\le {\dot u}_{*}\le 1$ a.e. in $(0,a)$.
Moreover, if $2L\notin[a^2, 2ah-a^2]$, 
then any minimizer $u$ of $\overline{\mathcal J}_{+}$ over ${\mathcal{C}^+_{a,h,L}}$ satisfies $u\in W^{1,\infty}(0,a)$ and $ 0\le {\dot u}\le 1$ a.e. in $(0,a)$.
\end{lemma}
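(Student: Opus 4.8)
The plan is to reduce the statement to a scalar variational problem via the representation $\overline{\mathcal J}_{+}(u)=h+\int_{0}^{a}(g^{**}(\dot u)-\dot u)\,dx$ of Lemma~\ref{representationlemma}: on $\mathcal C^+_{a,h,L}$ the energy depends only on $\dot u$, through $\psi(z):=g^{**}(z)-z$. On $[0,+\infty)$ one has $\psi(z)=-z/(1+z^{2})$ for $z\in[0,1]$, which is strictly decreasing from $0$ to $-\tfrac12$, and $\psi(z)\equiv-\tfrac12$ for $z\ge1$; in particular $\psi(\min(z,1))=\psi(z)$ for all $z\ge0$. Writing $u'=\dot u\,\mathcal L^{1}+(u')_{s}$ with $(u')_{s}\ge0$, using $u'([0,a])=h$ and the Fubini identity $L=\int_{0}^{a}u\,dx=\int_{[0,a]}(a-t)\,du'(t)$, every $u\in\mathcal C^+_{a,h,L}$ obeys
\begin{equation}\label{moment}
\int_{0}^{a}\dot u\,dx\le h,\qquad \int_{0}^{a}(a-t)\,\dot u(t)\,dt\le L,\qquad \int_{0}^{a}t\,\dot u(t)\,dt\le ah-L.
\end{equation}
Conversely, any nonnegative $w\in L^{1}(0,a)$ satisfying \eqref{moment} equals $\dot u$ for some $u\in\mathcal C^+_{a,h,L}$ having no singular part inside $(0,a)$: it suffices to place the masses $\alpha:=\tfrac1a\big(L-\int_{0}^{a}(a-t)w\big)$ at $x=0$ and $\beta:=h-\int_{0}^{a}w-\alpha$ at $x=a$, both nonnegative thanks to the last two inequalities of \eqref{moment}. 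Applying Lemma~\ref{representationlemma} on both sides, $u$ minimizes $\overline{\mathcal J}_{+}$ over $\mathcal C^+_{a,h,L}$ if and only if $\dot u$ minimizes $\int_{0}^{a}\psi$ over the convex set $\mathcal D$ of all nonnegative $w\in L^{1}(0,a)$ satisfying \eqref{moment}.

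For the first assertion, take any minimizer $u$ (one exists by Lemma~\ref{infJ+}) and set $w:=\min(\dot u,1)$. Since $w\le\dot u$, $w$ still satisfies \eqref{moment}, and $\int_{0}^{a}\psi(w)=\int_{0}^{a}\psi(\dot u)$; hence the function $u_{*}$ reconstructed from $w$ as above lies in $\mathcal C^+_{a,h,L}$, has $0\le\dot u_{*}\le1$ and no interior singular part --- so $u_{*}\in W^{1,\infty}(0,a)$ --- and satisfies $\overline{\mathcal J}_{+}(u_{*})=h+\int_{0}^{a}\psi(w)=\overline{\mathcal J}_{+}(u)$, i.e.\ $u_{*}$ is a minimizer of the required form.

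For the second assertion, assume $2L\notin[a^{2},2ah-a^{2}]$ and let $u$ be any minimizer, so $\dot u$ minimizes $\int_{0}^{a}\psi$ over $\mathcal D$. First I would show $\dot u\le1$ a.e.\ : if $|\{\dot u>1\}|>0$, then $w:=\min(\dot u,1)$ is again a minimizer over $\mathcal D$, and since $\{\dot u>1\}$ has positive measure and $t(a-t)>0$ for every $t\in(0,a)$, each inequality in \eqref{moment} becomes strict for $w$; moreover $w\not\equiv1$, because $w\equiv1$ would force $\tfrac{a^{2}}2\le L\le ah-\tfrac{a^{2}}2$, i.e.\ $2L\in[a^{2},2ah-a^{2}]$. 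Choosing $B\subseteq\{w<1-\delta\}$ of positive measure and replacing $w$ by $w+\epsilon\mathbf{1}_{B}$ with $\epsilon>0$ small keeps the (strict) constraints \eqref{moment} and strictly decreases $\int_{0}^{a}\psi$ since $\psi$ is strictly decreasing on $[0,1]$ --- a contradiction. Hence $0\le\dot u\le1$ a.e. Next, at least one inequality in \eqref{moment} is an equality at $\dot u$: if all were strict, then $\dot u\not\equiv1$ (for $\dot u\equiv1$ with all inequalities strict gives $2L\in(a^{2},2ah-a^{2})$), hence $|\{\dot u<1\}|>0$, and the same upward perturbation on a subset of $\{\dot u<1\}$ would lower the energy. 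Finally, an active constraint pins down the singular part: $\int_{0}^{a}\dot u=h$ forces $(u')_{s}([0,a])=0$; $\int_{0}^{a}(a-t)\dot u=L$ forces $\int_{[0,a]}(a-t)\,d(u')_{s}=0$, so $(u')_{s}$ is carried by $\{a\}$; and $\int_{0}^{a}t\,\dot u=ah-L$ forces $\int_{[0,a]}t\,d(u')_{s}=0$, so $(u')_{s}$ is carried by $\{0\}$. In each case $(u')_{s}$ has no mass in $(0,a)$, so $u$ is absolutely continuous on $(0,a)$ with $\dot u\le1$; that is, $u\in W^{1,\infty}(0,a)$.

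The main obstacle is the set-up of the first paragraph: recognizing that $\overline{\mathcal J}_{+}$ feels only $\dot u$ through the nonincreasing, eventually constant profile $\psi$, and that the admissible $\dot u$ are precisely the nonnegative functions meeting the three moment bounds \eqref{moment}. Once this is in place, ``truncating at $1$'' and ``nudging $\dot u$ up where it is below $1$'' do all the work, and the hypothesis $2L\notin[a^{2},2ah-a^{2}]$ enters only to exclude $\dot u\equiv1$ --- equivalently, to force one of the bounds in \eqref{moment} to be active, which in turn expels the singular part from the open interval $(0,a)$.
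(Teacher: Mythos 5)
Your proof is correct, and it takes a genuinely different route from the paper's. The paper obtains the dichotomy ``either $\dot u_*\ge 1$ a.e.\ or $u_*\in W^{1,\infty}$ with $0\le\dot u_*\le1$'' from the Du Bois--Reymond equation of the penalized functionals $\mathcal J_\epsilon$ (Lemma \ref{minJbar}\,iv)), and then rules out the branch $\dot u_*\ge1$ by a four-case analysis on $(a,h,L)$ (using $u_*(a^-)\ge a>h$ when $h<a$, the bound $L\ge a^2/2$ when $h=a$, and the comparison with $h-a+x$ in the remaining cases). You instead observe that, by Lemma \ref{representationlemma}, $\overline{\mathcal J}_+$ sees only the absolutely continuous density $\dot u$ through the nonincreasing profile $\psi=g^{**}-\mathrm{id}$ (constant on $[1,\infty)$), and that the admissible densities are exactly the nonnegative $w$ satisfying your three moment inequalities, the slack being absorbed by point masses at the endpoints. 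This reduction is sound (I checked the Fubini identity, the nonnegativity of $\alpha,\beta$, and the two-way correspondence between minimizers), and it makes the existence part a one-line truncation argument, with no Euler--Lagrange equation and no case distinction. For the second assertion your perturbation argument ($w\mapsto w+\epsilon\mathbf 1_B$ on a positive-measure subset of $\{w<1-\delta\}$, legitimate because all three constraints are strict after truncating a set where $\dot u>1$, and because $w\equiv1$ is excluded precisely by $2L\notin[a^2,2ah-a^2]$) is also correct, and it has the merit of addressing \emph{every} minimizer directly: the paper's case analysis is carried out only for the specific minimizer produced by the penalization, and the ``any minimizer'' clause is in effect recovered there only via the later strict-convexity/uniqueness argument of Lemma \ref{uniquelemma}. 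The step in which an active moment constraint forces $u'_s$ to charge only $\{0\}$ or $\{a\}$, hence $u\in W^{1,\infty}(0,a)$, is a clean observation that has no counterpart in the paper. What your approach gives up is the extra structural information that the paper's route produces along the way (continuity and monotonicity of $\dot u_*$, and the relation $g'(\dot u_*)=\overline\lambda x+\overline\mu$), which the paper needs later for the explicit hypocycloidal parametrization in Theorem \ref{mainbis}; so your argument can replace the proof of this lemma but not the penalization machinery as a whole.
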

\begin{proof} {\underline{Case I}}:  $0< h< a$ (hence $2L < a^{2}$). By iv) of Lemma 3.10 there exists  $u_{*}\in \argmin_{{\mathcal{C}^+_{a,h,L}}} \overline{\mathcal J}_{+}$  such that either $\dot u_{*}\ge 1$ a.e. in $(0, a)$ or $u_{*}\in W^{1,\infty}(0,a)$ with $0\le \dot u_{*}\le 1$ a.e. in $(0, a)$. If the first case occurs then by taking into account that $u'_{*}\ge 0$ and $u_{*}(0^+)\ge 0$ we get  $u_{*}(a^-)\ge \int_0^a \dot u_{*}\ge a>h$, a contradiction. Hence,  $u_{*}\in W^{1,\infty}(0,a)$,  $0\le \dot u_{*}\le 1$ a.e. in $(0, a)$
and $\overline{\mathcal J}_{+}(u_{*})=\mathcal J_{+}(u_{*})$, thus proving the thesis.

{\underline{Case II}}:  $h=a$ and  $2L < a^{2}$. Choose $u_{*}\in \argmin_{\mathcal{C}^+_{a,h,L}} \overline{\mathcal J}_{+}$ as in the previous case: if  $\dot u_{*}\ge 1$ a.e. in $(0, a)$ then  by taking into account that $u'_{*}\ge 0$ and $u_{*}(0^+)\ge 0$ we get $u_*(x)\ge x$ hence $L=\int_0^au_{*}\ge a^{2}/2$, a contradiction. The thesis follows by arguing as before.

{\underline{Case III}}: $h\ge a$ and $a^{2}\le 2L\le a(2h-a)$. It is readily seen that there exists $u_{*}\in W^{1,\infty}(0,a)$ such that $\dot u_{*}=1$ a.e. in $(0, a)$ and $\int_0^a u_{*}=L$. Since $g^{**}(1)-1\le g^{**}(z)-z$ for every $z\in \mathbb R$ we get $u_{*}\in \argmin_{\mathcal{C}^+_{a,h,L}} \overline{\mathcal J}_{+}$ and a direct computation shows that $\overline{\mathcal J}_{+}(u_{*})=\mathcal J_{+}(u_{*})$, thus proving the thesis.

{\underline{Case IV}}: $h\ge a$ and $a(2h-a)< 2L< 2ah$. Assume by contradiction that $\dot u_{*}\ge 1$ a.e. in $(0, a)$: then either $u_{*}(0^+) > h-a$ or  $u_{*}(0^+) \le h-a$. In the first case we easily get $u_{*}(x)> h-a+x$, hence  $u_{*}(a^-)>h$, a contradiction. In the second one we claim that $u_{*}(x)\le h-a+x$: if this is true we get
$$a(2h-a) < 2L= 2\int_0^a u_{*}(x)\,dx\le 2\int_0^a(h-a+x)\,dx= a(2h-a),$$
a contradiction. To prove the claim it is enough to observe that if there exists $\overline x\in (0,a)$ such that $u_{*}(\overline x)> h-a+\overline x$ then by taking into account that $\dot u_{*}\ge 1$ a.e. in $(0, a)$ we get  $u_{*}(x)\ge u_{*}(\overline x)+x-\overline x> h-a+x$ for every $x\ge \overline x$ hence $u_{*}(a-)>h$, a contradiction. Therefore $u_{*}\in W^{1,\infty}(0,a)$ and $0\le u_{*}\le 1$ a.e. in $(0, a)$ also in this last case.
\end{proof}

The following is the version of Theorem \ref{main1} for functional $\overline{\mathcal J}_+$.

\begin{lemma}\label{uniquelemma} Suppose that 
 $2L\notin (a^2,2ah-a^2)$. Then $\overline{\mathcal{J}}_+$ admits a unique minimizer over $\mathcal{C}^+_{a,h,L}$. Otherwise, $\overline{\mathcal{J}}_+$ admits infinitely many minimizers over $\mathcal{C}^+_{a,h,L}$.
\end{lemma}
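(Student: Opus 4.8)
The strategy is to reduce the problem to the analysis of the convex integrand $g^{**}$ together with the area constraint, distinguishing the two parameter regimes. For the uniqueness part, suppose $2L\notin(a^2,2ah-a^2)$. By Lemma~\ref{moreproperties} (together with the representation in Lemma~\ref{representationlemma}), every minimizer $u$ of $\overline{\mathcal J}_+$ over $\mathcal C^+_{a,h,L}$ lies in $W^{1,\infty}(0,a)$ with $0\le\dot u\le 1$ a.e., so that $g^{**}(\dot u)=g(\dot u)$ and $\overline{\mathcal J}_+(u)=\mathcal J_+(u)=\int_0^a g^{**}(\dot u)\,dx$. Hence it suffices to show that the \emph{convex} functional $u\mapsto\int_0^a g^{**}(\dot u)\,dx$ has a unique minimizer on the convex set $\{u\in W^{1,\infty}(0,a): 0\le\dot u\le 1,\ u(0)=0,\ u(a)=h,\ \int_0^a u=L\}$. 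Strict convexity is the crux: $g^{**}$ is convex but only \emph{strictly} convex on $(-\infty,1)$ and affine on $[1,\infty)$; since admissible competitors here satisfy $\dot u\le 1$, the restriction of $g^{**}$ to the relevant range $[0,1]$ is strictly convex, so the functional is strictly convex along the affine segment $tu_0+(1-t)u_1$ unless $\dot u_0=\dot u_1$ a.e. This gives uniqueness. One should double-check the boundary cases $2L=a^2$ and $2L=2ah-a^2$ (where the minimizer is the pure slope-$1$ profile, possibly degenerate), which are already essentially identified in Lemma~\ref{moreproperties}, Cases~II--III.

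For the non-uniqueness part, suppose $2L\in(a^2,2ah-a^2)$, which forces $h>a$. Here the key observation, implicit in Case~III of Lemma~\ref{moreproperties} and in \eqref{piecewiseenergy}, is that the value $g^{**}(z)-z$ is minimized exactly at every $z\in\{0\}\cup[1,\infty)$: indeed $g^{**}(z)-z = g(z)-z<0$ has its minimum at... more precisely, $g^{**}(z)-z\ge g^{**}(1)-1 = -\tfrac12$ for all $z$, with equality iff $z\ge 1$, while $g^{**}(0)-0=0$; so by Lemma~\ref{representationlemma}, $\overline{\mathcal J}_+(u)=h+\int_0^a(g^{**}(\dot u)-\dot u)\,dx\ge h-\tfrac a2$ whenever $\dot u\in\{0\}\cup[1,\infty)$ a.e.\ and this lower bound $h-\tfrac a2$ is attained. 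Concretely, for any measurable partition of $(0,a)$ into a set where $\dot u=0$ and a set (of measure $a$ in the ``$x$-direction'' contribution... ) — rather, one builds nondecreasing $u$ that is piecewise affine with slopes only $0$ and $1$; the total horizontal length of slope-$1$ pieces must be $a$ (to reach height... no: to have $u(a)=h$ with slopes $0$ and $1$ one needs the slope-$1$ pieces to have total length... this only works if one also allows slopes $>1$). The cleanest route: exhibit the one-parameter family obtained by sliding a single slope-$1$ ramp of horizontal length $a$ and vertical rise $a$ to various horizontal positions, preceded and followed by constant pieces, and adjust the position so that $\int_0^a u=L$; the admissible positions sweep $\int_0^a u$ continuously over $[\tfrac{a^2}{2},\,ah-\tfrac{a^2}{2}]$, which contains $L$ in an interval of positive length of positions, hence infinitely many minimizers. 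Each such $u$ has $\overline{\mathcal J}_+(u)=h-\tfrac a2=\min$, and minimality among \emph{all} competitors follows from the lower bound $\overline{\mathcal J}_+(u)\ge h+\int_0^a(g^{**}(\dot u)-\dot u)\ge h-\tfrac a2$ valid for every $u\in\mathcal C^+_{a,h,L}$.

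\textbf{Main obstacle.} The delicate point is the uniqueness argument: one must be careful that the constraint set is genuinely convex and that minimizers indeed satisfy $\dot u\le 1$ (so that the affine part of $g^{**}$ plays no role) — this is exactly what Lemma~\ref{moreproperties} provides, so the logical dependence must be invoked cleanly. A secondary subtlety is handling the endpoint values $u(0^+)$ and $u(a^-)$: competitors in $\mathcal C^+_{a,h,L}$ are only $BV_{loc}$, so a priori the minimizer could place mass in jumps at $0$ or $a$; Lemma~\ref{moreproperties} rules this out in the uniqueness regime by giving $W^{1,\infty}$ regularity, and in the non-uniqueness regime the jumps are harmless since they are already counted in $u'_s([0,a])$ and the explicit family can be taken continuous. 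Once these points are settled, both halves of the statement follow from convexity/strict convexity of $g^{**}$ on the relevant range and an elementary continuity-of-the-area argument for the explicit family.
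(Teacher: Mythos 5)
Your uniqueness half follows the paper's route: Lemma \ref{moreproperties} forces every minimizer into $W^{1,\infty}(0,a)$ with $0\le\dot u\le 1$ when $2L\notin[a^2,2ah-a^2]$, and strict convexity of $\psi(z):=g^{**}(z)-z$ on $[0,1]$ (via the representation of Lemma \ref{representationlemma}) eliminates a second minimizer by a midpoint/Jensen argument. Note, however, that the endpoint cases $2L=a^2$ and $2L=2ah-a^2$ are \emph{not} covered by the ``any minimizer'' part of Lemma \ref{moreproperties}, so the separate argument you only flag is actually needed: there $\dot v\ge 1$ a.e.\ forces $v(x)=x$ (resp.\ $v(x)=h-a+x$) by the area constraint, and any competitor with $\dot v<1$ on a set of positive measure loses by the same Jensen step. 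With that supplied, this half is essentially the paper's proof.

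The non-uniqueness half has a genuine gap. Your ``cleanest route'' --- a slope-$1$ ramp of horizontal length $a$, preceded and followed by constant pieces, slid to various positions --- cannot yield infinitely many minimizers for a \emph{fixed} $L$. A ramp of horizontal length $a$ fills all of $(0,a)$, so the only remaining freedom is the vertical offset $c$, i.e.\ $u_c(x)=x+c$ with jumps at $0$ and $a$; but $\int_0^a u_c=\tfrac{a^2}{2}+ca$ is \emph{strictly increasing} in $c$, so exactly one position matches $\int_0^a u=L$ --- one minimizer, not an ``interval of positive length of positions.'' If instead you insert genuine slope-$0$ pieces inside $(0,a)$, the function is no longer a minimizer, because $\psi(0)=0>-\tfrac12=\min\psi$; your lower bound $\overline{\mathcal J}_+(u)\ge h-\tfrac a2$ is attained iff $\dot u\ge 1$ a.e., not on $\{0\}\cup[1,\infty)$ (the value $z=0$ does not minimize $\psi$, as you yourself note mid-sentence). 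The missing idea --- which you brush against when writing ``this only works if one also allows slopes $>1$'' --- is an \emph{area-preserving} deformation with slopes $\ge 1$. The paper takes $u_\epsilon(x)=(1+\epsilon)(x-a/2)+u_*(a/2)$: the perturbation is odd about $x=a/2$, so $\int_0^a u_\epsilon=\int_0^a u_*=L$ for every $\epsilon$; the profile stays inside $[0,h]$ for small $\epsilon>0$ precisely because $0<u_*(0^+)<u_*(a^-)<h$ in this parameter range; and $\psi(1+\epsilon)=-\tfrac12$ makes every such competitor attain the minimum $h-\tfrac a2$. Without some such construction, the claim of infinitely many minimizers is not established.
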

\begin{proof}   By Lemma \ref{representationlemma}, there holds $\overline{\mathcal{J}}_+(u)=h+\int_0^a \psi(\dot u(x))\,dx$ for every $u\in BV_{loc}(\mathbb{R})$, where $\psi(x):=g^{**}(x)-x$ is a convex function on $\mathbb{R}$ which is strictly convex on $[0,1]$. 

Suppose first that  
$ 2L\notin [a^2,2ah-a^2]$.  By Lemma \ref{moreproperties},  $\overline{\mathcal{J}}_+$ admits a minimizer $u_*$ over $\mathcal{C}^+_{a,h,L}$, which necessarily satisfies $u_{*}\in W^{1,\infty}(0,a)$ with $ 0\le {\dot u}_{*}\le 1$ a.e. in $(0,a)$. On the other hand $\dot u_*=1$ a.e. in $(0,a)$ is not admissible in this range of the parameters $a,h,L$ (see Lemma \ref{moreproperties}), thus $\dot u_*<1$ on a set of positive measure in $(0,a)$. Since $\psi$ is strictly convex in $[0,1]$, if $v_*\in\mathcal{C}^+_{a,h,L}$ was another minimizer of $\overline{\mathcal{J}}_+$, not coinciding a.e. with $u_*$, we could consider $\mathcal{C}^+_{a,h,L}\ni w_*:=\tfrac12 u_*+\tfrac12 v_*$: by the strict convexity of $\psi$ in $[0,1]$, Jensen inequality would give $\overline{\mathcal{J}}_+(w_*)<\overline{\mathcal{J}}_+(u_*)$, contradicting minimality of $u_*$. Therefore, the minimizer $u_*$ of $\overline{\mathcal{J}}_+$ is unique. If $h\ge a$ and either $2L=a^2$ or $2L=a(2h-a)$, the $\mathcal{C}^+_{a,h,L}$ piecewise affine function $u_*$ having slope $1$ on $(0,a)$ is the unique minimizer of $\overline{\mathcal{J}}_+$. Indeed, in this case it is clear that if  $v_*\in\mathcal{C}^+_{a,h,L}$  satisfies $\dot v_\ast\ge 1$ a.e. in $(0,a)$, then $v_*=u_*$ a.e. $\mathbb{R}$. Therefore any admissible competitor $v_*$, not coinciding $a.e.$ with $u_*$, needs to satisfy $\dot v_*<1$ on a set of positive measure in $(0,a)$, thus it is not a minimizer due to the former Jensen inequality argument.  
\EEE

Else suppose that both the conditions $h> a$ and $a^{2}< 2L< a(2h-a)$ hold true. Since we are in Case III from the proof of Lemma \ref{moreproperties}, we see that Lemma \ref{moreproperties} and Lemma \ref{minJbar} entail existence of a minimizer $u_*$ of $\overline{\mathcal{J}}_+$ over ${\mathcal{C}^+_{a,h,L}}$ such that $\dot u_*=1$ a.e. in $(0,a)$. In this range of parameters, there necessarily holds $0<u_*(0^+)<u_*(a^-)<h$ (in order to match the area constraint).
 Therefore, we may consider the family $u_\epsilon(x):=(1+\epsilon)(x-a/2)+u_*(a/2)$, $x\in (0,a)$, and  for any $\epsilon>0$ small enough  $u_\epsilon$  fits the strip $[0,h]$. After having extended $u_\epsilon$ to $\mathbb{R}$ in such a way that it belongs to $\mathcal{C}^+_{a,h,L}$, from the representation of $\overline{\mathcal{J}}_+$ given by $\overline{\mathcal{J}}_+(u)=h-\int_0^a \psi(\dot u(x))\,dx$, it is clear that $\overline{\mathcal{J}}_+(u_\eps)$ does not depend on $\epsilon$, as $\psi$ is constant on $[1,+\infty)$ and the slope of $u_\eps$ is greater than $1$ for any $\epsilon>0$. 
\end{proof}

\begin{remark}\label{infinity}\rm
In case competitors with $\dot u>1$ a.e. in $(0,a)$ are present,  a large nonuniquenss phenomenon occurs. Solutions are not restricted to functions such that $\dot u$ is constant in $(0,a)$ as in the proof of Lemma \ref{uniquelemma}. For instance, it is clear that any other continuous piecewise affine curve with slopes greater or equal than $1$ on $(0,a)$, as soon as it satisfies the constraints that define $\mathcal{C}^+_{a,h,L}$, is a minimizer of  $\overline{\mathcal{J}}_+$ over $\mathcal{C}^+_{a,h,L}$. Any other graph enjoying the same properties will attain the minimum. However, by Jensen inequality we obtain that the solution defined by $\dot u=1$ in $(0,a)$ is unique among those elements $u$ of $\mathcal{C}^+_{a,h,L}$ that satisfy $u\in W^{1,\infty}(0,a)$ and $\dot u\le 1$ a.e. in $(0,a)$.
\end{remark}

This section ends with some further properties of minimizers of functional $\overline{\mathcal J}_+$.

\begin{lemma}\label{concaveconvex}
Suppose that 
 $2L\notin(a^2,2ah-a^2)$.
Then the unique minimizer $u$ of $\overline{\mathcal J}_+$ over $ {\mathcal{C}^+_{a,h,L}}$ provided by {\rm Lemma \ref{uniquelemma}} is either convex on $(0,a)$ with $u(0^+)=0$ or concave on $(0,a)$ with $u(a^-)=h$.
\end{lemma}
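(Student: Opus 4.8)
By Lemma \ref{representationlemma} and the notation in the proof of Lemma \ref{uniquelemma}, we have $\overline{\mathcal J}_+(u)=h+\int_0^a\psi(\dot u(x))\,dx$ with $\psi=g^{**}-\mathrm{id}$ convex on $\mathbb R$ and strictly convex on $[0,1]$, and $\psi$ is constant (equal to $g^{**}(1)-1=-1/2$) on $[1,+\infty)$ and strictly decreasing on $(-\infty,1]$. The plan is to split into the two complementary cases covered by Lemma \ref{moreproperties} and Lemma \ref{uniquelemma}. \emph{Case A:} $2L\notin[a^2,2ah-a^2]$. Here Lemma \ref{moreproperties} forces every minimizer $u$ to satisfy $u\in W^{1,\infty}(0,a)$ with $0\le\dot u\le 1$ a.e., so in particular $u'_s([0,a])$ is concentrated only at the endpoints $0$ and $a$, i.e. $u$ is continuous on $(0,a)$ and the only possible jumps are at $x=0$ (a jump up from $0$ to $u(0^+)$) and at $x=a$ (a jump up from $u(a^-)$ to $h$). \emph{Case B:} $h>a$ and $a^2<2L<a(2h-a)$. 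Here by Lemma \ref{uniquelemma} (and Lemma \ref{moreproperties}, Case III) the unique minimizer selected there is the affine function with $\dot u=1$ on $(0,a)$; being affine it is trivially both convex and concave, and since $0<u(0^+)<u(a^-)<h$ one can declare it either convex with... wait—it does not satisfy $u(0^+)=0$ nor $u(a^-)=h$.

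Hence I must be more careful: the lemma asserts the minimizer is convex \emph{with} $u(0^+)=0$ \emph{or} concave \emph{with} $u(a^-)=h$, so Case B cannot actually occur under the hypothesis $2L\notin(a^2,2ah-a^2)$, because the open interval excludes... no, $(a^2,2ah-a^2)$ is open and Case B is $a^2<2L<a(2h-a)=2ah-a^2$, which \emph{is} inside $(a^2,2ah-a^2)$. So under the standing hypothesis $2L\notin(a^2,2ah-a^2)$, Case B is excluded and only Case A (together with the boundary values $2L=a^2$ or $2L=2ah-a^2$, where again $\dot u\le 1$ by Lemma \ref{moreproperties} since then $2L\notin(a^2,2ah-a^2)$ but $2L\in[a^2,2ah-a^2]$; there the unique minimizer is the affine slope-$1$ function, which as an element of $\mathcal C^+_{a,h,L}$ must have $u(0^+)=0$ and $u(a^-)=h$ forced by area—no: with slope $1$ and area $a^2/2$ one gets $u(0^+)=0$ exactly) survives. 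So effectively I only need to handle the regime $0\le\dot u\le 1$ a.e.

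So assume now $u$ is the unique minimizer and $0\le\dot u\le 1$ a.e. on $(0,a)$, with possible endpoint jumps $p:=u(0^+)\ge 0$ and $q:=h-u(a^-)\ge 0$. The key step is an interior variational argument: since $\psi$ is strictly convex on $[0,1]$ and $u$ is not affine in general, I would show that on the open interval where $0<\dot u<1$, the Euler--Lagrange condition from Lemma \ref{minJbar}(iv), namely $g'(\dot u)=\overline\lambda x+\overline\mu$, holds; because $g'$ is strictly increasing on $(0,1)$ (indeed $g'(z)=z^2(z^2+3)/(1+z^2)^2$ is strictly increasing on $[0,1)$), this forces $\dot u$ to be monotone on $(0,a)$: nondecreasing if $\overline\lambda\ge 0$ (convex $u$) or nonincreasing if $\overline\lambda\le0$ (concave $u$). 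This already gives convexity or concavity of $u$ on $(0,a)$. The remaining point is to rule out the ``wrong'' endpoint jump: if $u$ is convex I must show $p=u(0^+)=0$, and if concave $q=0$. This is the main obstacle and I would handle it by a competitor argument. Suppose $u$ is convex with $p>0$; consider the competitor $\tilde u$ obtained by sliding the jump: decrease $u(0^+)$ toward $0$ and compensate by increasing the slope slightly near $x=0$, or more simply replace $u$ on a small interval $(0,\delta)$ by a steeper affine piece reaching the same value $u(\delta)$ from $0$. Since $\psi$ is \emph{decreasing} on $[0,1]$ and the replacement increases $\dot u$ there while still keeping it $\le 1$ for $\delta$ small (because $\dot u(0^+)<1$ by convexity unless $u$ is the slope-$1$ affine function, already excluded), the energy $\int_0^a\psi(\dot u)$ strictly decreases—contradicting minimality. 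The area constraint is restored by a further small perturbation as in Lemma \ref{infcurve}/Lemma \ref{piecewise}, whose cost is $o(1)$ relative to the strict gain, or alternatively by noting the one-parameter family preserving area. Symmetrically, a concave minimizer must have $u(a^-)=h$. Finally, a convex $u$ cannot have both $p>0$ excluded and also be concave unless it is affine slope $1$, in which case area forces $p=0$; so the two alternatives in the statement are exhaustive. The delicate point throughout is checking that the competitor genuinely lies in $\mathcal C^+_{a,h,L}$ (monotonicity, the strip $[0,h]$, and the area) while strictly lowering the energy, which is why the strict convexity of $\psi$ on $[0,1]$ together with its strict monotonicity there is essential.
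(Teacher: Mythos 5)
Your reduction to the regime $0\le\dot u\le 1$ and your derivation of convexity/concavity are sound and essentially match the paper: the paper gets monotonicity of $\dot u$ by passing to the limit in the penalized problems of Lemma \ref{minJbar} (points \emph{i)} and \emph{iii)}), while you read it off from the Euler--Lagrange relation $g'(\dot u)=\overline\lambda x+\overline\mu$ in point \emph{iv)}; since $g'$ is strictly increasing on $[0,1]$ these are equivalent. The genuine gap is in the second half, where you rule out the ``wrong'' endpoint jump. First, a local flaw: if $u$ is convex with $p=u(0^+)>0$ and you replace $u$ on $(0,\delta)$ by the affine piece from $(0,0)$ to $(\delta,u(\delta))$, the new slope is $u(\delta)/\delta\ge p/\delta$, which blows up as $\delta\downarrow 0$; your claim that it stays $\le 1$ for $\delta$ small is false (this happens to be harmless for the energy comparison, since $\psi$ is constant on $[1,+\infty)$ and one still gets a strict decrease whenever $\dot u<1$ on a positive-measure subset of $(0,\delta)$, but the argument as written is wrong). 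Second, and more seriously, the restoration of the area constraint is not carried out. Your competitor changes $\int_0^a u$ by an amount of order $p\,\delta$, while the strict energy gain it produces is also of order $\delta$ (roughly $\delta\bigl(\psi(m)-\psi(1)\bigr)$ with $m$ the mean slope on $(0,\delta)$); so the assertion that the cost of re-imposing the constraint via the perturbations of Lemma \ref{infcurve} or Lemma \ref{piecewise} is ``$o(1)$ relative to the strict gain'' is unjustified --- the two quantities are of the same order in $\delta$, and without a quantitative comparison the contradiction does not follow.

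The paper closes this gap with a cleaner competitor that decouples the energy gain from the area correction. Say $u$ is concave, $\dot u<1$ on a set of positive measure, and (for contradiction) $u(a^-)<h$. Take the piecewise affine interpolation $\bar u$ with nodes on the graph of $u$; by concavity $\bar u\le u$, so $\int_0^a(u-\bar u)=\eps>0$ for a suitable choice of nodes, and Jensen with the strict convexity of $\psi$ on $[0,1]$ gives $\overline{\mathcal J}_+(\bar u)<\overline{\mathcal J}_+(u)$. Then set $v:=\bar u+(\eps/a)\mathbf 1_{(0,a)}$: this restores the area exactly, fits in the strip because $u(a^-)<h$ leaves room, and costs nothing in energy because by Lemma \ref{representationlemma} the functional $h+\int_0^a\psi(\dot u)$ depends only on $\dot u$, which a constant shift does not change. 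The convex case is symmetric (the interpolation lies above $u$, and the room for the downward shift is exactly the assumed jump $u(0^+)>0$). If you want to salvage your local-replacement idea, you would need to combine it with this kind of zero-cost vertical translation rather than with a generic area-fixing perturbation, and verify that the hypothesis you are contradicting is precisely what guarantees the room for that translation.
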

\begin{proof}
By points {\it i)} and {\it iii)} of Lemma \ref{minJbar}, $u$ can be obtained as $w^{*}- BV_{loc}(\mathbb R)$ limit of $W^{1,2}_{loc}(\mathbb{R})$ functions $u_j$   that are convex for all $j$ or concave for all $j$. Up to subsequences, $u_j$ converge to $u$ pointwise in $(0,a)$ and $u$ itself is therefore either concave or convex.

   If $\dot u(x)=1$ for any $x\in(0,a)$, by Lemma \ref{uniquelemma} we are necessarily in the case $2L=a^2$ or in the case $2L=a(2h-a)$ and the proof is concluded. Else suppose that $u$ is concave and that there exists $0<c<a$ such that $u'<1$ a.e. in $(0,c)$. Suppose by contradiction that $u(a^-)<h$. Let us consider a piecewise affine approximation of $\bar u$ of $u$, with nodes on the graph of $u$, such that $\int_0^a u-\int_0^a\bar u=\eps$. By Jensen inequality, due to the strict convexity of $\psi(x):=g^{**}(x)-x$ on $(0,1),$ we have $\overline{\mathcal{J}}_+(\bar u)<\overline{\mathcal{J}}_+(u)$. On the other hand, if $\eps$ is small enough we have that $v:=\bar u+(\eps/a){\bf 1}_{(0,a)}$ belongs to $\mathcal{C}^+_{a,h,L}$ and  $\overline{\mathcal{J}}_+(v)=\overline{\mathcal{J}}_+(\bar u)$. This contradicts the minimality of $u$. In case  $u$ is convex and $u'<1$ on a set of positive measure, an analogous argument shows that $u(0^+)=0$.
\end{proof}

\begin{corollary}\label{convconc}
Let $h\le a$. If $2L\le ah$ (resp. $2L\ge a h$), then the unique minimizer $u$ of $\overline{\mathcal J}_+$ over ${\mathcal{C}^+_{a,h,L}}$ provided by {\rm Lemma \ref{uniquelemma}} is  convex on $(0,a)$ with $u(0^+)=0$ (resp. concave on $(0,a)$ with $u(a^-)=h$). In particular,  $u(x)=0\vee (hx/a)\wedge h$ if $2L=ah$.

Else suppose that $h>a$. If $2L\le a^2$ (resp. $2L\ge a(2h-a)$), then the unique minimizer $u$ of $\overline{\mathcal J}_+$ over ${\mathcal{C}^+_{a,h,L}}$ provided by {\rm Lemma \ref{uniquelemma}} is convex with $u(0^+)=0$ (resp. $concave$ with $u(a^-)=h$). In particular,
 if $2L=a^{2}$ then $u(x)=x$ in $(0,a)$.
\end{corollary}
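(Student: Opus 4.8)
The plan is to build directly on the dichotomy already established in Lemma~\ref{concaveconvex}: for every parameter triple considered here one has $2L\notin(a^2,2ah-a^2)$, so Lemma~\ref{uniquelemma} applies and the unique minimizer $u$ of $\overline{\mathcal J}_+$ over $\mathcal{C}^+_{a,h,L}$ is \emph{either} convex on $(0,a)$ with $u(0^+)=0$, \emph{or} concave on $(0,a)$ with $u(a^-)=h$. What remains is to decide which alternative occurs, and the only tool needed is the secant inequality for convex/concave functions combined with the area constraint $\int_0^a u=L$.

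Concretely, I would first record two elementary bounds. Since $0\le u\le h$, the endpoint limits $u(0^+),u(a^-)$ are finite, and extending $u$ to $[0,a]$ by them preserves convexity (resp.\ concavity). In the convex alternative $u$ lies below its chord, so $u(x)\le\tfrac{x}{a}u(a^-)\le\tfrac{h}{a}x$ on $(0,a)$, hence $L=\int_0^a u\le \tfrac{ah}{2}$; in the concave alternative with $u(a^-)=h$ the function lies above the chord joining $(0,u(0^+))$ to $(a,h)$, so $u(x)\ge\tfrac{h}{a}x$ and $L\ge\tfrac{ah}{2}$. These two facts settle the non-degenerate cases at once. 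If $h\le a$ and $2L<ah$, the concave alternative is impossible, so $u$ is convex with $u(0^+)=0$; if $2L>ah$, the convex alternative is impossible, so $u$ is concave with $u(a^-)=h$. If $h>a$, then $a^2<ah<2ah-a^2$, so $2L\le a^2$ forces $2L<ah$ and hence $u$ convex with $u(0^+)=0$, while $2L\ge 2ah-a^2$ forces $2L>ah$ and hence $u$ concave with $u(a^-)=h$.

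For the degenerate identifications I would argue by forcing equality. If $h\le a$ and $2L=ah$, then in whichever alternative holds the relevant secant inequality, once integrated against $\int_0^a u=\tfrac{ah}{2}$, must be an equality, so $u(x)=\tfrac{h}{a}x$ on $(0,a)$; since $h\le a$ gives $0\le \tfrac{h}{a}x\le h$ on $[0,a]$, extending by $0$ on $(-\infty,0)$ and $h$ on $(a,\infty)$ yields $u=0\vee(hx/a)\wedge h$ (note this function is simultaneously convex with $u(0^+)=0$ and concave with $u(a^-)=h$, consistently with both sides of the ``resp.''). If $h>a$ and $2L=a^2$ (resp.\ $2L=a(2h-a)$), I would instead read off from the proof of Lemma~\ref{uniquelemma} that the minimizer is the $\mathcal{C}^+_{a,h,L}$ function of constant slope $1$ on $(0,a)$; imposing $\int_0^a u=\tfrac{a^2}{2}$ (resp.\ $ah-\tfrac{a^2}{2}$) together with $u(0^+)\ge 0$ pins down $u(x)=x$ (resp.\ $u(x)=x+h-a$) on $(0,a)$, which is affine, hence in particular convex with $u(0^+)=0$ (resp.\ concave with $u(a^-)=h$), and gives $u(x)=x$ when $2L=a^2$.

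No genuine analytic difficulty arises: everything reduces to Lemma~\ref{concaveconvex}, Lemma~\ref{uniquelemma}, and a one-line chord estimate. The only thing requiring care is the bookkeeping of thresholds — checking that $2L\le a^2$ with $h>a$ really does sit strictly below $ah$ so that the chord estimate is decisive, and that the borderline values $2L\in\{a^2,a(2h-a)\}$ are handled through the constant–slope–$1$ minimizer of Lemma~\ref{uniquelemma} rather than through the chord estimate, which there only yields membership in the correct class but not the explicit formula.
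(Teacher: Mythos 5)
Your proposal is correct and follows essentially the same route as the paper: invoke the dichotomy of Lemma \ref{concaveconvex}, then rule out the wrong alternative by comparing $u$ with the chord $hx/a$ and integrating against the area constraint. You are in fact slightly more explicit than the paper about the borderline cases $2L=ah$ and $2L\in\{a^2,a(2h-a)\}$, which the paper leaves implicit, but the underlying argument is identical.
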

\begin{proof}
Let $h\le a$. Suppose that $2L< ah$. Suppose by contradiction that $u$ is concave on $(0,a)$. Letting $w(x):=0\vee (hx/a)\wedge h$, since $u(a^-)=h$ by Lemma \ref{concaveconvex} and since $u$ is concave, it is clear that   $u\ge w$ in $(0,a)$. This entails $\int_0^a u\ge \int_0^a w=ah/2>L$, a contradiction. In case $2L>ah$ the argument is analogous.

The same reasoning also applies for proving the result in case  $h>a$.  
\end{proof}

\BBB
\begin{remark}\label{endingremark}\rm It is worth noticing that by symmetry reasons, if $u\in \mathcal{C}^+_{a,h,L}$ is a minimizer and it is convex in $(0,a)$, then $v(x):= h-u(a-x)$ satisfies $\overline{\mathcal J}_+(v)=\overline{\mathcal J}_+(u)$ and it is a minimizer in $\mathcal{C}^+_{a,h,ah-L}$ which is concave in $(0,a)$. Therefore all significant cases of {\rm Corollary \ref{convconc}} can be reduced to
  ${2L}\le (ah)\wedge a^2$ (as in {\rm Theorem \ref{mainbis}}).
\end{remark}

\BBB

\section{Proof of the main results}\label{proofs}

We go back to the analyis of functional $\mathcal{F}$. The next two results give its relation with the auxiliary functionals from Section \ref{BVrelaxhotel}.

\begin{lemma}\label{goodparametrization}
 Let $\gamma\in\mathcal{A}_{a,h,L}$ be a piecewise affine curve such that  $\gamma_1'(t)>0$ for a.e. $t\in(0,1)$. Then there exists a piecewise affine function $u\in \mathcal{B}_{a,h,L}$ such that $\mathcal{G}(u)=\mathcal{F}(\gamma)$.

Conversely,
let $u\in\mathcal{B}_{a,h,L}$ be piecewise affine. Then there exists $\gamma\in\mathcal{A}_{a,h,L}$ with $\gamma_1'(t)>0$ for a.e. $t\in(0,1)$ such that $\mathcal{F}(\gamma)=\mathcal{G}(u)$.

In particular there holds
\begin{equation*}\begin{aligned}
&\inf\left\{\mathcal F(\gamma):\:\gamma\in\mathcal{A}_{a,h,L},\,  \gamma_1'(t)>  0\,\mbox{ for a.e. }\, t\in(0,1) ,\ \gamma\ \hbox{piecewise affine} \right\}\\
&\qquad\qquad=\inf\left\{ \mathcal G(u): u\in\mathcal{B}_{a,h,L},\, u\ \hbox{piecewise linear}\right\}.\end{aligned}
\end{equation*}
\end{lemma}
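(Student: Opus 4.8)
The plan is to use the fact that a piecewise affine curve $\gamma$ with $\gamma_1'>0$ a.e.\ is, after reparametrization by its first coordinate, the graph of a piecewise affine function on $[0,a]$, and conversely that the rescaled graph of a piecewise affine $u\in\mathcal B_{a,h,L}$ is an admissible curve; both correspondences preserve the energy exactly, by the change of variables $x=\gamma_1(t)$ (resp.\ $x=at$). For the first direction, given $\gamma$ as in the statement, I observe that $\gamma_1\in AC([0,1];[0,a])$ is piecewise affine with $\gamma_1(0)=0$, $\gamma_1(1)=a$, and that each of its affine pieces must have strictly positive slope (a piece of non‑positive slope would force $\gamma_1'\le 0$ on a set of positive measure); hence $\gamma_1$ is an increasing piecewise affine homeomorphism of $[0,1]$ onto $[0,a]$ with piecewise affine inverse $\gamma_1^{-1}$. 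I then set $u:=\gamma_2\circ\gamma_1^{-1}$ on $[0,a]$, extended by $0$ on $(-\infty,0)$ and by $h$ on $(a,+\infty)$. Then $u$ is piecewise affine, $0\le u\le h$ because $\gamma([0,1])\subset[0,a]\times[0,h]$, and $u(0^+)=\gamma_2(0)=0$, $u(a^-)=\gamma_2(1)=h$. The substitution $x=\gamma_1(t)$ gives $\dot u(x)=\gamma_2'(t)/\gamma_1'(t)$ a.e., so $\int_0^a u\,dx=\int_0^1\gamma_2\,\gamma_1'\,dt$; integrating $(\gamma_1\gamma_2)'$ over $[0,1]$ and using \eqref{constr} yields $\int_0^1\gamma_2\,\gamma_1'\,dt=ah-(ah-L)=L$, whence $u\in\mathcal B_{a,h,L}$. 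Finally, since $\gamma_1'>0$ one has $(\dot u)_+=(\gamma_2')_+/\gamma_1'$, so $g(\dot u(x))=\dfrac{(\gamma_2'(t))_+^3}{\gamma_1'(t)\bigl((\gamma_1'(t))^2+(\gamma_2'(t))^2\bigr)}$, which is precisely the integrand produced from $\mathcal F(\gamma)$ by the substitution $x=\gamma_1(t)$; hence $\mathcal G(u)=\mathcal F(\gamma)$.

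For the converse, given a piecewise affine $u\in\mathcal B_{a,h,L}$, I would take the rescaled graph parametrization $\gamma(t):=(at,\,u(at))$, $t\in[0,1]$. This $\gamma$ is piecewise affine, satisfies $\gamma_1'\equiv a>0$, $\gamma(0)=(0,0)$, $\gamma(1)=(a,h)$, has trace in $[0,a]\times[0,h]$, is simple because $\gamma_1$ is strictly increasing, and $|\gamma'|\ge a>0$ a.e. The substitution $x=at$ gives $\int_0^1\gamma_1\gamma_2'\,dt=\int_0^a x\,\dot u(x)\,dx=[x\,u]_0^a-\int_0^a u=ah-L$, so $\gamma\in\mathcal A_{a,h,L}$; and since $\gamma_2'(t)=a\,\dot u(at)$, we get $\dfrac{(\gamma_2')_+^3}{(\gamma_1')^2+(\gamma_2')^2}=\dfrac{a^3(\dot u(at))_+^3}{a^2(1+(\dot u(at))^2)}=a\,g(\dot u(at))$, so that $\mathcal F(\gamma)=\int_0^1 a\,g(\dot u(at))\,dt=\int_0^a g(\dot u(x))\,dx=\mathcal G(u)$.

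The displayed ``in particular'' identity is then immediate: each of the two constructions above sends a competitor in one class to a competitor in the other class with the same value of the functional, so the two infima coincide. No serious obstacle arises; the only point requiring care is the well‑posedness of the reparametrization in the first direction, namely that $\gamma_1$ is a homeomorphism and $\gamma_1^{-1}$ is piecewise affine — this is exactly where the hypothesis $\gamma_1'>0$ a.e.\ (together with piecewise affinity) is used, and its failure for oscillating $\gamma_1'$ is precisely the mechanism behind the non‑existence in Example \ref{counterexample}.
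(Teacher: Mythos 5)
Your proposal is correct and follows essentially the same route as the paper: reparametrize $\gamma$ as the graph of $u:=\gamma_2\circ\gamma_1^{-1}$ via the change of variables $x=\gamma_1(t)$ in one direction, and take $\gamma(t)=(at,u(at))$ in the other, checking in each case that the energy and the area constraint are preserved. The additional care you take in justifying that $\gamma_1$ is a piecewise affine increasing homeomorphism is a sound (and implicit in the paper) preliminary step, so nothing further is needed.
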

\begin{proof}
It is enough to exploit the fact that the values of $\mathcal{F}(\gamma)$ and $\int_0^1\gamma_1(t)\gamma_2'(t)\,dt$ are invariant by reparametrization. 
If $\gamma\in\mathcal{A}_{a,h,L}$ is piecewise affine with $\gamma_1'>0$ a.e. in $(0,1)$, then $\gamma$ can be reparametrized as the graph of a continuous piecewise affine map $u$ on $[0,a]$, that is, as $[0,a]\ni t\mapsto (t,u(t))$. This is done by defining $u:=\gamma_2\circ \gamma_1^{-1}$.
Note that $u$ is absolutely continuous, as the composition of an absolutely continuous function and an absolutely continuous strictly increasing function.
By changing variables, since $(\gamma_1^{-1})'(x)=1/\gamma_1'(\gamma_1^{-1}(x))$ for a.e. $x\in(0,a)$,  we get
\[
\mathcal{G}[u]=\!\int_0^a\!\frac{((\gamma_2\circ\gamma_1^{-1})'(x))_+^3}{1+(\gamma_2\circ\gamma_1^{-1}(x))^2}\,dx=\int_0^a\!\frac{(\gamma_2'(\gamma_1^{-1}(x)))_+^3\,/\gamma_1'(\gamma_1^{-1}(x))}{(\gamma_1'(\gamma_1^{-1}(x)))^2+(\gamma_2'(\gamma_1^{-1}(x)))^2}\,dx
=\int_0^1\!\!\frac{(\gamma_2'(t))_+^3\,dt}{\gamma_1'(t)^2+\gamma_2'(t)^2},
\]
\[
\int_0^a u(x)\,dx=\int_0^a \gamma_2(\gamma_1^{-1}(x))\,dx=\int_0^1\gamma_2(t)\gamma_1'(t)\,dt=ah-\int_0^1\gamma_1(t)\gamma_2'(t)\,dt=L,
\]
showing that indeed $u\in\mathcal{B}_{a,h,L}$ and $\mathcal{G}(u)=\mathcal{F}(\gamma)$.

  Similarly, if $u\in\mathcal{B}_{a,h,L}$ is a piecewise affine map, we may consider the curve $[0,1]\ni t\mapsto \gamma(t):=(at, u(at))$. It is immediate to check that $\gamma\in\mathcal{A}_{a,h,L}$ is piecewise affine with $\gamma_1'(t)>0$ in $(0,1)$ and that $\mathcal{F}(\gamma)=\mathcal{G}(u)$.
\end{proof}
\BBB

\begin{lemma}\label{FGJ} There holds $\;\inf\mathcal{G}=\inf\mathcal{J}=\inf\mathcal{F}=\min_{\mathcal C^+_{a,h,L}}\overline{\mathcal{J}}_+$.
\end{lemma}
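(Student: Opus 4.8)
The plan is to establish the chain of equalities $\inf\mathcal{G}=\inf\mathcal{J}=\inf\mathcal{F}=\min_{\mathcal{C}^+_{a,h,L}}\overline{\mathcal{J}}_+$ by proving a small number of inequalities and invoking results already collected in Sections \ref{sectionF} and \ref{BVrelaxhotel}. First I would record the easy identifications. By Lemma \ref{infcurve} and Lemma \ref{goodparametrization}, $\inf\mathcal{F}$ coincides with the infimum of $\mathcal{G}$ over piecewise affine elements of $\mathcal{B}_{a,h,L}$, which by Lemma \ref{piecewise} equals $\inf\mathcal{G}$. Next, since $g^{**}\le g$ pointwise, we immediately get $\mathcal{J}\le\mathcal{G}$ on $BV_{loc}(\mathbb R)$ and hence $\inf\mathcal{J}\le\inf\mathcal{G}$; combined with Lemma \ref{steps} (which gives $\inf\mathcal{J}=\inf\mathcal{J}_+$), we obtain $\inf\mathcal{J}_+\le\inf\mathcal{G}=\inf\mathcal{F}$. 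Finally, Lemma \ref{infJ+} identifies $\inf\mathcal{J}_+$ with $\min_{\mathcal{C}^+_{a,h,L}}\overline{\mathcal{J}}_+$, so the only remaining task is the reverse inequality $\inf\mathcal{G}\le\inf\mathcal{J}_+$, equivalently $\inf\mathcal{F}\le\min_{\mathcal{C}^+_{a,h,L}}\overline{\mathcal{J}}_+$.

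For that reverse inequality I would argue as follows. Let $u$ be a minimizer of $\overline{\mathcal{J}}_+$ over $\mathcal{C}^+_{a,h,L}$, which exists by Lemma \ref{infJ+}. By part b) of Lemma \ref{Gamma1} there is a sequence $(u_j)\subset\mathcal{B}^+_{a,h,L}$ with $\limsup_j\mathcal{J}_+(u_j)\le\overline{\mathcal{J}}_+(u)=\min_{\mathcal{C}^+_{a,h,L}}\overline{\mathcal{J}}_+$. Now for each fixed $j$, $u_j\in\mathcal{B}^+_{a,h,L}$ is a $W^{1,1}_{loc}$ monotone function; using Lemma \ref{piecewise} I can replace $u_j$ by a piecewise affine $\bar u_j\in\mathcal{B}^+_{a,h,L}$ with $|\mathcal{J}(u_j)-\mathcal{J}(\bar u_j)|$ arbitrarily small, and then I need to pass from $\mathcal{J}$ back to $\mathcal{G}$ (equivalently from $g^{**}$ back to $g$): since $\bar u_j$ is monotone nondecreasing and piecewise affine, a standard ``sawtooth'' oscillation argument at the graph level —replacing each affine piece of slope $s$ by a fine zig-zag alternating between the two slopes that realize $g^{**}(s)$ as a convex combination of values of $g$, while keeping endpoints and, up to a small correction, the area fixed— produces $\tilde u_j\in\mathcal{B}^+_{a,h,L}$ (still piecewise affine) with $\mathcal{G}(\tilde u_j)$ as close as we like to $\mathcal{J}(\bar u_j)$. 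Composing these approximations and using Lemma \ref{goodparametrization} to convert $\tilde u_j$ into an admissible curve, we get $\inf\mathcal{F}=\inf\mathcal{G}\le\limsup_j\mathcal{G}(\tilde u_j)\le\overline{\mathcal{J}}_+(u)$, which is the desired inequality.

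The main obstacle I anticipate is precisely this last relaxation step, i.e.\ showing $\inf\mathcal{G}\le\inf\mathcal{J}_+$ by recovering $g$ from its convex envelope $g^{**}$ through fine-scale oscillations of the derivative. The subtlety is twofold: one must keep the oscillating competitor inside the strip $[0,h]$ (which is harmless for nondecreasing functions since the oscillation stays between consecutive node heights) and one must restore the exact area constraint $\int_0^a\tilde u_j=L$ after introducing the oscillations (which can be done with a vanishing vertical shift or a slight tuning of the oscillation amplitude, exactly as in the area-correction Step 2 of Lemma \ref{infcurve}). Once these bookkeeping points are handled, the Lipschitz continuity of $g$ and $g^{**}$ makes all the error estimates routine. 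I would therefore structure the proof as: (1) the easy equalities $\inf\mathcal{F}=\inf\mathcal{G}$ and $\inf\mathcal{J}=\inf\mathcal{J}_+=\min\overline{\mathcal{J}}_+$ via the cited lemmas; (2) the trivial bound $\inf\mathcal{J}_+\le\inf\mathcal{G}$; (3) the oscillation construction giving $\inf\mathcal{G}\le\min\overline{\mathcal{J}}_+$; and conclude that all four quantities agree.
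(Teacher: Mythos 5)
Your skeleton for the ``easy'' identifications is exactly the paper's: $\inf\mathcal F=\inf\mathcal G$ via Lemmas \ref{infcurve}, \ref{goodparametrization} and \ref{piecewise}; $\inf\mathcal J\le\inf\mathcal G$ from $g^{**}\le g$; and $\inf\mathcal J=\inf\mathcal J_+=\min\overline{\mathcal J}_+$ from Lemmas \ref{steps} and \ref{infJ+}. Where you diverge is the remaining inequality $\inf\mathcal G\le\min_{\mathcal C^+_{a,h,L}}\overline{\mathcal J}_+$. The paper does \emph{not} use any convexification-by-oscillation here: it invokes Lemma \ref{moreproperties} to select a minimizer $u_*$ of $\overline{\mathcal J}_+$ with $u_*\in W^{1,\infty}(0,a)$ and $0\le\dot u_*\le 1$ a.e., a range on which $g=g^{**}$, and then builds the curve $\gamma_*$ of \eqref{gammastar} whose vertical segments at $x=0$ and $x=a$ contribute exactly $u_*(0^+)+h-u_*(a^-)$ to $\mathcal F$; this gives $\mathcal F(\gamma_*)=\overline{\mathcal J}_+(u_*)$ on the nose, and one only approximates $\gamma_*$ by a piecewise affine curve with $\gamma_1'>0$ to land in the graph class. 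In other words, the parametric functional $\mathcal F$ already accounts for the singular part of $\overline{\mathcal J}_+$ through vertical segments, so no recovery of $g$ from $g^{**}$ is ever needed. Your route buys generality (it would prove $\inf\mathcal G=\inf\mathcal J$ without knowing anything about the structure of minimizers of $\overline{\mathcal J}_+$), at the price of the oscillation construction.

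That construction, as you state it, has a genuine gap. For this specific $g$ the convex envelope satisfies $g^{**}(s)=s-\tfrac12$ for $s\ge 1$, and since $g(z)=z-\tfrac{z}{1+z^2}>z-\tfrac12$ for all $z>1$ with equality only at $z=1$ and asymptotically as $z\to+\infty$, the line $s\mapsto s-\tfrac12$ is an asymptote of $g$, not a chord: for $s>1$ there exist \emph{no} two finite slopes $z_1,z_2$ with $\lambda z_1+(1-\lambda)z_2=s$ and $\lambda g(z_1)+(1-\lambda)g(z_2)=g^{**}(s)$. So ``the two slopes that realize $g^{**}(s)$ as a convex combination of values of $g$'' do not exist, and your zig-zag as described cannot be built. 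The repair is standard but must be made explicit: on a piece of slope $s>1$ alternate slopes $1$ and $M$ with weights chosen to preserve the endpoints, note that $\lambda g(1)+(1-\lambda)g(M)\to g^{**}(s)$ as $M\to+\infty$, and add this extra limit (plus the area correction you already mention) to the diagonal argument. With that patch your proof closes; without it, the key step fails precisely on the set where $g\neq g^{**}$, which is the only place the step is needed.
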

\begin{proof}
 Take $u_*\in BV_{loc}(\mathbb{R})$ from Lemma \ref{moreproperties}, such that $u_*\in\argmin_{\mathcal{C}^+_{a,h,L}}\overline{\mathcal{J}}_+$ and $0\le \dot u_*\le 1$ a.e. in $(0,a)$. 
 It is easy to check that minimality of $u_*$ implies that $\dot u_*>0$ on a set of postive measure in $(0,a)$, and since the inequality $g^{**}(z)<z$ holds in $(0,1]$, by Lemma \ref{representationlemma} we get
 \[
 \overline{\mathcal{J}}_+(u_*)=h+\int_0^a(g^{**}(\dot u(x))-\dot u(x))\,dx<h.
 \]
   Lemma \ref{steps} and    Lemma \ref{infJ+} entail  $$h>\overline{\mathcal{J}}_+(u_*)
   =\inf\mathcal{J}_+=\inf\mathcal{J}.$$
On the other hand, by definition of $g^{**}$ in \eqref{g**} it is clear that $\mathcal{G}\ge\mathcal{J}$, so that by the above equalities we get $\inf\mathcal{J}\le\inf\mathcal{G}$. We are left to prove the opposite inequality.


Let $0\le t_1<t_2\le 1$ and let
$\gamma_{*}: [0,1]\to [0,a]\times [0,h]$ be defined by
\begin{equation}\label{gammastar}
\gamma_{*}(t):=\left\{\begin{array}{ll}  (0, t\,t_1^{-1}u_{*}(0^+))\quad &\mbox{ if $t\in[0,t_1)$}\\
 (a(t_2-t_1)^{-1}(t-t_1), u_*(a(t_2-t_1)^{-1}(t-t_1)) \quad& \mbox{ if $t\in[t_1,t_2]$}\\
 (a, u_{*}(a^-)+ (h-u_{*}(a^-))(1-t_2)^{-1}(t-t_2) \quad &\mbox{if $t\in(t_2,1]$}.
\end{array}\right.
\end{equation}
It is readily seen that $\gamma_{*}\in \mathcal{A}_{a,h,L}$ and that 
\begin{equation*}\label{47}\begin{aligned}
\mathcal F(\gamma_{*})&=u_*(0^+)+h-u_*(a^-)+\int_{t_1}^{t_2}a(t_2-t_1)^{-1}\frac{u_*'(a(t_2-t_1)^{-1}(t-t_1))^3}{1+u_*'(a(t_2-t_1)^{-1}(t-t_1))^2}\,dt\\
&=\int_{0}^{a}(g({\dot u}_{*}(x))- {\dot u}_{*}(x))\,dx + h
\end{aligned}
\end{equation*}
where we can replace $g$ with $g^{**}$ in the last line  due to   $0\le {\dot u}_{*}\le 1$. Therefore, by Lemma \ref{representationlemma} we get $\mathcal F(\gamma_{*})=\overline{\mathcal J}_{+}(u_{*})<h$. We next take $\eps>0$ and a piecewise affine curve $\bar\gamma\in\mathcal{A}_{a,h,L}$ such that $\bar\gamma_1'(t)>0$ for a.e.  $t\in(0,1)$ and $|\mathcal{F}(\gamma_*)-\mathcal{F}(\bar\gamma)|<\eps$, which is possible by Lemma \ref{infcurve}.
By Lemma \ref{goodparametrization} there is $\bar u\in\mathcal{B}_{a,h,L}$ such that $\mathcal{G}(\bar u)=\mathcal{F}(\bar\gamma)$. Summing up we have 
\[
\inf\mathcal G\le \mathcal{G}(\bar u)=\mathcal{F}(\bar\gamma)<\mathcal{F}(\gamma_*)+\eps=\overline{\mathcal{J}}_+(u_*)+\eps,
\]
and by  arbitrariness of $\eps$ we get $\inf\mathcal{G}\le \overline{\mathcal{J}}_+(u_*)
=\inf\mathcal{J}$.

We have shown that $
\inf\mathcal{J}=\inf\mathcal{G}$.  Lemma \ref{infcurve}, Lemma \ref{goodparametrization} and Lemma \ref{piecewise} imply that $\inf\mathcal{G}=\inf\mathcal{F}$, concluding the proof.
\BBB
\end{proof}


Before proceeding to the proof of  Theorem \ref{main1}, we need three more technical lemmas.


\begin{lemma}\label{+class}
Let $\gamma\in\mathcal{A}_{a,h,L}$ and suppose that there exist $t_1,t_2$, with $0\le t_1<t_2\le 1$, such that $\gamma_2(t_2)<\gamma_2(t_1)$. Then 
$
\mathcal{F}(\gamma)>\inf\mathcal{F}.
$
\end{lemma}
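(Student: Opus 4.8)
The plan is not to perturb $\gamma$ into a better admissible curve, but to read off from $\gamma$ a competitor for the relaxed problem, exploiting the identity $\inf\mathcal F=\min_{\mathcal C^+_{a,h,L}}\overline{\mathcal J}_+$ (Lemma \ref{FGJ}) and the representation $\overline{\mathcal J}_+(u)=h-\int_0^a\psi(\dot u)\,dx$ of Lemma \ref{representationlemma}, where $\psi(r):=r-g^{**}(r)$ equals $r/(1+r^2)$ on $[0,1]$ and $\tfrac12$ on $[1,\infty)$. I will use two elementary facts about $\psi$: it is the running maximum of $\Phi(r):=r/(1+r^2)$, hence $\psi\ge\Phi$ on $[0,\infty)$; and $r-\Phi(r)=g(r)\ge0$ for $r\ge0$, with $g(r)>0$ for $r>0$. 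Once a $w\in\mathcal C^+_{a,h,L}$ with $\overline{\mathcal J}_+(w)<\mathcal F(\gamma)$ is exhibited, we are done: $\mathcal F(\gamma)>\overline{\mathcal J}_+(w)\ge\min\overline{\mathcal J}_+=\inf\mathcal F$.

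The main construction is the running--maximum envelope. For $\rho\in\mathcal A_{a,h,M}$ put $\bar v_\rho(x):=\sup\{\rho_2(s):\rho_1(s)\le x\}$ on $[0,a]$, extended by $0$ for $x<0$ and $h$ for $x>a$; since $\rho_1$ is nondecreasing, $\bar v_\rho$ is a nondecreasing $[0,h]$--valued function, so $\bar v_\rho\in\mathcal C^+_{a,h,M'}$ with $M':=\int_0^a\bar v_\rho\ge M$ (it dominates a.e.\ the graph function $\tilde u_\rho:=\rho_2\circ\rho_1^{-1}$, whose integral is $\int_0^1\rho_1'\rho_2\,dt=M$). I claim $\overline{\mathcal J}_+(\bar v_\rho)\le\mathcal F(\rho)$, strictly unless $\rho_2$ is monotone nondecreasing. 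Writing $\mathcal F(\rho)=\int_0^a g(\tilde u_\rho')\,dx+P_v$ with $P_v$ the upward variation of $\rho_2$ along the vertical segments of $\rho$, splitting $[0,a]$ into the contact set $A=\{\tilde u_\rho=\bar v_\rho\}$ (on which $\dot{\bar v}_\rho=\tilde u_\rho'\ge0$) and its complement, and using $\overline{\mathcal J}_+(\bar v_\rho)=h-\int_A\psi(\tilde u_\rho')\,dx$ together with $\int_A\tilde u_\rho'\,dx+J=h$ ($J$ the singular mass of $\bar v_\rho'$ on $[0,a]$, with $0\le J\le P_v$), one obtains
\[
\mathcal F(\rho)-\overline{\mathcal J}_+(\bar v_\rho)=\int_A\!\big(\psi(\tilde u_\rho')-\Phi(\tilde u_\rho')\big)\,dx+\int_{A^c}\!g\big((\tilde u_\rho')_+\big)\,dx+(P_v-J)\ \ge\ 0 .
\]
The right--hand side is strictly positive exactly when $\rho_2$ is non--monotone, since then either $\tilde u_\rho'>0$ on a positive--measure subset of $A^c$ (a climb back below a previously attained level) or a vertical segment rises while below a previously attained level, forcing $P_v>J$.

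It remains to match the area constraint. Apply the construction to $\gamma$ to get $\bar v:=\bar v_\gamma\in\mathcal C^+_{a,h,L_+}$ with $L_+\ge L$ and $\overline{\mathcal J}_+(\bar v)<\mathcal F(\gamma)$. Apply it also to the reflected curve $\hat\gamma(t):=(a,h)-\gamma(1-t)$: one checks $\hat\gamma\in\mathcal A_{a,h,ah-L}$, $\mathcal F(\hat\gamma)=\mathcal F(\gamma)$ (change variables $s=1-t$) and $\hat\gamma_2$ is again non--monotone, so $v^-(x):=h-\bar v_{\hat\gamma}(a-x)$ lies in $\mathcal C^+_{a,h,L_-}$ with $L_-=ah-\int_0^a\bar v_{\hat\gamma}\le L$ and, by the reflection identity $\overline{\mathcal J}_+\big(h-u(a-\cdot)\big)=\overline{\mathcal J}_+(u)$ (cf.\ Remark \ref{endingremark}), $\overline{\mathcal J}_+(v^-)<\mathcal F(\gamma)$. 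Finally $w_\lambda:=(1-\lambda)v^-+\lambda\bar v\in\mathcal C^+$ for $\lambda\in[0,1]$, and $\lambda\mapsto\int_0^a w_\lambda$ is continuous with range $[L_-,L_+]\ni L$; picking $\lambda_0$ with $\int_0^a w_{\lambda_0}=L$ gives $w_{\lambda_0}\in\mathcal C^+_{a,h,L}$, and since $\overline{\mathcal J}_+(u)=h-\int_0^a\psi(\dot u)\,dx$ with $\psi$ concave is a convex functional of $u$, $\overline{\mathcal J}_+(w_{\lambda_0})\le\max\{\overline{\mathcal J}_+(v^-),\overline{\mathcal J}_+(\bar v)\}<\mathcal F(\gamma)$, as required. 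I expect the one genuinely delicate point to be the verification of the displayed decomposition when $\rho$ has vertical segments (or, more precisely, when $\bar v_\rho'$ has a nontrivial singular part), i.e.\ the bookkeeping of that singular part against $P_v$; the rest is soft.
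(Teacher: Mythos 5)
Your argument is correct in outline but follows a genuinely different route from the paper, and it leaves real work at exactly the point you flag. The paper first replaces $\gamma$ by a piecewise affine approximation $\hat\gamma$ with $\hat\gamma_1'>0$ (Lemma \ref{infcurve}), rewrites it as a graph $\hat u$, and reuses the monotone rearrangement of Lemma \ref{steps} with vertices kept on the graph of $\hat u$, so that the area constraint is restored exactly; the key point there is a \emph{quantitative} lower bound $\mathcal J(\hat u)-\mathcal J(u)\ge\varphi\bigl(\tfrac{y_p-y_q}{2}\bigr)$ on the energy drop, needed precisely because the approximation costs an $\epsilon$ that must be beaten. You instead work directly at the relaxed level: the running-maximum envelope $\bar v_\gamma$ and its reflected counterpart $v^-$ are legitimate competitors in $\mathcal C^+_{a,h,L_+}$ and $\mathcal C^+_{a,h,L_-}$ with $L_-\le L\le L_+$, the convex interpolation restoring the constraint is sound (since $g^{**}(z)-z$ is convex, $\overline{\mathcal J}_+$ is convex along the segment), and your displayed decomposition of $\mathcal F(\rho)-\overline{\mathcal J}_+(\bar v_\rho)$ is the right identity: the three nonnegative terms do capture the excess, and strict positivity when $\rho_2$ is not nondecreasing follows because the re-ascent to a previously attained level must occur either where $\rho_1'>0$ (feeding $\int_{A^c}g$ over a set of positive measure, since $\rho_1$ maps $\{\rho_1'>0\}$ onto a set of full measure) or where $\rho_1'=0$ (feeding $P_v-J$). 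What this buys is a purely qualitative strictness with no approximation step; what it costs is that the decomposition must be proved for a general absolutely continuous curve, where $\tilde u_\rho=\rho_2\circ\rho_1^{-1}$ is only $BV$: you need the chain rule/area formula identifying $\dot{\tilde u}_\rho(\rho_1(t))$ with $\rho_2'(t)/\rho_1'(t)$ a.e.\ on $\{\rho_1'>0\}$, the identification of $\dot{\bar v}_\rho$ with $\dot{\tilde u}_\rho$ on the contact set and with $0$ off it, and the inequality $P_v\ge J$ in the case where $\{\rho_1'=0\}$ is not a finite union of intervals, so that ``vertical segments'' is a misnomer and $\bar v_\rho'$ may carry a Cantor part as well as jumps. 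None of this fails, but it is a nontrivial amount of measure theory that the paper's piecewise affine reduction is specifically designed to avoid; if you instead apply your envelope construction to the piecewise affine $\hat\gamma$ of Lemma \ref{infcurve}, everything becomes elementary, but you must then quantify the strict gap (as the paper does with $\varphi$) for it to survive the $\epsilon$-approximation.
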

\begin{proof}
We let  $\epsilon>0$ and we let $\hat\gamma\in\mathcal{A}_{a,h,L}$ be a piecewise affine approximation of $\gamma$ with $\hat\gamma_1'(t)>0$ a.e. in $(0,1)$, 
such that $|\mathcal{F}(\gamma)-\mathcal{F}(\hat\gamma)|<\epsilon$ and such 
 that $|\hat\gamma_2(t_i)-\gamma_2(t_i)|<\epsilon$, $i=1,2$.  We let $(x_p,y_p):=(\hat\gamma_1(t_1),\hat\gamma_2(t_1))$ and $(x_q,y_q):=(\hat\gamma_1(t_2),\hat\gamma_2(t_2))$. We may assume wlog that 
 \begin{equation*}y_q< \hat\gamma_2(t)< y_p\quad\mbox{ for any $t\in(t_1,t_2)$},\end{equation*} 
 otherwise we could define
 \[
 \tilde t_1:=\max\{t\in [t_1,t_2]:\hat\gamma_2(t)\ge\hat\gamma_2(t_1) \},\quad \tilde t_2:=\min\{t\in[\tilde t_1,t_2]:\hat\gamma_2(t)\le\hat\gamma_2(t_2)\}
 \]
 and subsequently redefine
 $(x_p,y_p):=(\hat\gamma_1(\tilde t_1),\hat\gamma_2(\tilde t_1))$ and $(x_q,y_q):=(\hat\gamma_1(\tilde t_2),\hat\gamma_2(\tilde t_2))$.

Notice that $\hat\gamma$ coincides on $[0,a]$ with the graph of a   piecewise affine function $\hat u\in\mathcal{B}_{a,h,L}$. Hence, by the proof of Lemma \ref{steps} there exists a new piecewise affine curve $u\in\mathcal{B}_{a,h,L}^+$ 
 having ordered vertices at the points $(0,0)=(s_0,u(s_0)),\,(s_1,u(s_1)),\ldots,(s_k,u(s_k))=(a,h)$ along the curve $\hat\gamma$. We let $S:=\{s_0,s_1,\ldots,s_k\}$. Jensen inequality ensures that
\begin{equation}\label{allintervals}
\int_{s_j}^{s_{j+1}}g^{**}(u'(x))\,dx\le\int_{s_{j}}^{s_{j+1}}g^{**}(\hat u'(x))\,dx,\qquad j=0,\ldots, k-1.
\end{equation}
 
 We let
 \[
 x_1=\max\{ s\in S, s\le x_p\},\quad    x_4=\min\{ s\in S, s\ge x_q\},\quad y_1=u(x_1),\quad y_4=u(x_4).
 \]
 Supposing that $\{s\in S: x_p< s< x_q\}\neq\emptyset$, we further define
 \[
 x_2=\min\{ s\in S, s> x_p\},\quad  x_3=\max\{ s\in S, s< x_q\},\quad y_2=u(x_2),\quad y_3=u(x_3),
 \]
 so that $y_1\le y_2\le y_3\le y_4$. Else if $\{s\in S: x_p< s< x_q\}=\emptyset$,
  we define $x_2=x_3=\tfrac{x_p+x_q}{2}$ and $y_2=y_3=\tfrac{y_p+y_q}{2}$. 
 By construction, there always holds $y_q\le y_2\le y_3\le y_p$.

By repeated use of Jensen inequality and since from \eqref{g**} we have $g^{**}(z)=0$ for $z\le 0$, there hold
\begin{equation}\begin{aligned}\label{4}
&\int_{x_1}^{x_2}g^{**}(u')+\int_{x_3}^{x_4}g^{**}(u')\le(x_2-x_1)\,g^{**}\left(\tfrac{y_2-y_1}{x_2-x_1}\right)+(x_4-x_3)\,g^{**}\left(\tfrac{y_4-y_3}{x_4-x_3}\right),\\
& \int_{x_1}^{x_2}g^{**}(\hat u')\ge \int_{x_1}^{x_p}g^{**}(\hat u')\ge (x_p-x_1)\,g^{**}\left(\tfrac{y_p-y_1}{x_p-x_1}\right)\ge (x_2-x_1)\,g^{**}\left(\tfrac{y_p-y_1}{x_2-x_1}\right),\\
& \int_{x_3}^{x_4}g^{**}(\hat u')\ge \int_{x_q}^{x_4}g^{**}(\hat u')\ge (x_4-x_q)\,g^{**}\left(\tfrac{y_4-y_q}{x_4-x_q}\right)\ge (x_4-x_3)\,g^{**}\left(\tfrac{y_4-y_q}{x_4-x_3}\right),
\end{aligned}
\end{equation}
where the mapping $(0,+\infty)\times\mathbb{R}\ni(x,y)\mapsto x g^{**}(y/x)$ is understood to be extended by continuity to $x=0$ (with value $y_+$), and we used the fact that $[0,+\infty)\ni x\mapsto x g^{**}(y/x)$ is nonincreasing for any $y\in\mathbb{R}$. 
Thanks to \eqref{allintervals} and \eqref{4} we get
\begin{equation}\label{doppia}
\mathcal{J}(\hat u)-\mathcal{J}(u)\ge (x_2-x_1)\!\left(g^{**}\!\left(\tfrac{y_p-y_1}{x_2-x_1}\right)-g^{**}\!\left(\tfrac{y_2-y_1}{x_2-x_1}\right)\!\right)
+(x_4-x_3)\!\left(g^{**}\!\left(\tfrac{y_4-y_q}{x_4-x_3}\right)-g^{**}\!\left(\tfrac{y_4-y_3}{x_4-x_3}\right)\!\right).
\end{equation}
We define  $\varphi(x):=\min\left\{\frac{x}{2},\,\frac{x^3}{a^2+h^2} \right\}$ for $x\ge 0$. Again
the definition of $g^{**}$ in \eqref{g**} entails
\begin{equation}\label{xg**}u\,\left(g^{**}\left(\tfrac{v_1}{u}\right)-g^{**}\left(\tfrac{v_2}{u}\right)\right)\ge\varphi(v_1-v_2)\,{\bf{1}}_{v_2\ge 0}\end{equation}
 for all $u\in(0,a), v_1\in(0,h), v_2\in(0,h)$ with $v_1\ge v_2$.  

If $y_2\ge y_1$ and $y_4\ge y_3$, from \eqref{doppia} and \eqref{xg**} we get
\[
\mathcal{J}(\hat u)-\mathcal{J}(u)\ge \varphi(y_p-y_2)+\varphi(y_3-y_q)\ge \varphi(\max\{y_p-y_2, y_3-y_q\})\ge \varphi\left(\tfrac{y_p-y_q}{2}\right),
\]
where we have used 
 $y_3\ge y_2$ which entails $2\max\{y_p-y_2,y_3-y_q\}\ge y_p-y_2+y_3-y_q\ge y_p-y_q$.
 Else we notice that  $y_2<y_1$ or $y_4<y_3$  may happen only if  $\{s\in S: x_p< s< x_q\}=\emptyset$, in which case $y_p-y_2=y_3-y_q=\tfrac{y_p-y_q}{2}$. Moreover, in such case since $y_1\le y_4$ and $y_2=y_3$ it is clear that the two inequalities $y_2<y_1$ and $y_4<y_3$ do not simultaneously hold. Therefore, even in this case from \eqref{doppia} and \eqref{xg**} we get $\mathcal{J}(\hat u)-\mathcal{J}(u)\ge \varphi\left(\tfrac{y_p-y_q}{2}\right)$.
 
 We finally notice that
 $$y_p-y_q\ge \hat\gamma_2(t_2)-\hat\gamma_2(t_1)\ge\gamma_2(t_2)-\gamma_2(t_1)-2\eps,$$
where $\gamma_2(t_2)-\gamma_2(t_1)$ is, by assumption, a prescribed positive value (independent of $\epsilon$).
We conclude that for any small enough $\epsilon$
\[\begin{aligned}
\mathcal{F}(\gamma)\ge \mathcal{F}(\hat\gamma)-\epsilon=\mathcal{G}(\hat u)-\epsilon\ge\mathcal{J}(\hat u)-\epsilon
\ge \mathcal{J}(u)+ \varphi\left(\tfrac{y_p-y_q}{2}\right)-\epsilon
>\mathcal{J}(u).
\end{aligned}\]
Since we have shown in Lemma \ref{FGJ} that $\inf\mathcal{J}=\inf\mathcal{F}$, the result follows. 
%
%
\end{proof}

Before stating the next lemma, as further notation we introduce  the class
\begin{equation}\label{plusclass}\begin{aligned}
{\mathcal{A}}^+_{a,h,L}&:=\left\{\gamma\in\mathcal{A}_{a,h,L}: \gamma_2'(t)\ge 0 \mbox{ for a.e. $t\in(0,1)$}
     \right\}.
\end{aligned}\end{equation}

\begin{lemma} \label{nojumps}
Suppose that   $2L\notin (a^2,2ah-a^2)$. \BBB 
 Let $\gamma\in\mathcal{A}_{a,h,L}^+$.
If $0<t_1<t_2<1$ exist such that $\gamma_1'(t)=0$ on $(t_1,t_2)$, then $\mathcal{F}(\gamma)>\inf\mathcal{F}$.
 \end{lemma}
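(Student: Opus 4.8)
The plan is to reparametrise the curve $\gamma$ as a monotone function of one variable, landing in the class $\mathcal C^+_{a,h,L}$ on which the relaxed functional $\overline{\mathcal J}_+$ has a unique, explicitly understood minimiser, and then to contradict the position of the jump that the vertical arc of $\gamma$ forces on that function.

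First I would fix notation: since $\gamma_1$ is continuous and $\gamma_1'\equiv 0$ on $(t_1,t_2)$, both endpoints of the vertical arc share the abscissa $\bar x:=\gamma_1(t_1)=\gamma_1(t_2)$, which the hypothesis places strictly inside the curve, so that $\bar x\in(0,a)$, and $\delta:=\gamma_2(t_2)-\gamma_2(t_1)>0$ because $|\gamma'|\ne 0$ a.e.\ forces $\gamma_2'>0$ a.e.\ on $(t_1,t_2)$. As $\gamma\in\mathcal A^+_{a,h,L}$ has both components nondecreasing, it is the completed graph of the nondecreasing function
\[
u_\gamma(x):=\sup\{\gamma_2(t):\gamma_1(t)\le x\},\qquad x\in(0,a),
\]
extended by $0$ for $x\le 0$ and by $h$ for $x\ge a$; it belongs to $\mathcal C^+_{a,h,L}$, the area condition coming from $\int_0^a u_\gamma=\int_0^1\gamma_2\gamma_1'\,dt=ah-\int_0^1\gamma_1\gamma_2'\,dt=L$. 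A change of variables---made rigorous through the piecewise-affine approximation of Lemma~\ref{infcurve} and the curve/function correspondence of Lemma~\ref{goodparametrization}, exactly as in the proof of Lemma~\ref{+class}---then gives
\[
\mathcal F(\gamma)=\int_0^a g(\dot u_\gamma(x))\,dx+(u_\gamma)'_s([0,a]),
\]
the first term collecting the arcs where $\gamma_1'>0$ (there the integrand is $\gamma_1'\,g(\gamma_2'/\gamma_1')$) and the second the vertical arcs (there the integrand is $\gamma_2'$, of total mass $h-\int_0^a\dot u_\gamma$); in particular the arc over $(t_1,t_2)$ forces $(u_\gamma)'_s$ to carry an atom of mass $\ge\delta$ at the interior point $\bar x$.

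Then I would compare with the relaxed problem: using $g\ge g^{**}$, the definition \eqref{scJ+} of $\overline{\mathcal J}_+$ and Lemma~\ref{FGJ},
\[
\mathcal F(\gamma)=\int_0^a g(\dot u_\gamma)\,dx+(u_\gamma)'_s([0,a])\ \ge\ \int_0^a g^{**}(\dot u_\gamma)\,dx+(u_\gamma)'_s([0,a])\ =\ \overline{\mathcal J}_+(u_\gamma)\ \ge\ \min_{\mathcal C^+_{a,h,L}}\overline{\mathcal J}_+\ =\ \inf\mathcal F .
\]
By Lemma~\ref{uniquelemma} the functional $\overline{\mathcal J}_+$ has a unique minimiser $u_*$ over $\mathcal C^+_{a,h,L}$; by Lemma~\ref{moreproperties} and Corollary~\ref{convconc} this $u_*$ lies in $W^{1,\infty}(0,a)$, satisfies $0\le\dot u_*\le 1$, and is convex with $u_*(0^+)=0$ (when $2L\le(ah)\wedge a^2$) or concave with $u_*(a^-)=h$ (when $2L\ge(ah)\vee(2ah-a^2)$), so that $(u_*)'_s$ reduces to a single atom, located at $x=a$ or at $x=0$ and in any case not at an interior point. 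Since $(u_\gamma)'_s$ does carry an atom at $\bar x\in(0,a)$, we get $u_\gamma\ne u_*$; hence $u_\gamma$ is not a minimiser of $\overline{\mathcal J}_+$, the second inequality above is strict, and $\mathcal F(\gamma)>\inf\mathcal F$.

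The main obstacle is the rigorous justification of the reparametrisation identity $\mathcal F(\gamma)=\int_0^a g(\dot u_\gamma)\,dx+(u_\gamma)'_s([0,a])$: even when $\gamma$ has infinitely many vertical arcs one must check that the change of variables sends the arcs with $\gamma_1'>0$ onto $\int_0^a g(\dot u_\gamma)\,dx$ and the vertical arcs onto $(u_\gamma)'_s([0,a])$ without loss. As in Lemma~\ref{goodparametrization} and Lemma~\ref{+class} one proceeds by approximating $\gamma$ with piecewise-affine curves having $\gamma_1'>0$, passing to the associated functions in $\mathcal B^+_{a,h,L}$, straightening their steep portions into jumps, and letting the mesh tend to $0$; the interior atom of $(u_\gamma)'_s$ at $\bar x$ survives because $\bar x$ is fixed and bounded away from $0$ and $a$. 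A final bookkeeping point is that a vertical arc at $\bar x=a$ (resp.\ $\bar x=0$) is exactly the terminal (resp.\ initial) segment already present in the minimiser, which is why the statement must be read with $\bar x$ strictly inside $(0,a)$.
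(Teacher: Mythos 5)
Your argument is sound and reaches the conclusion by a genuinely different route from the paper. The paper's proof does \emph{not} keep the vertical arc in place: it cuts the interior vertical segment of height $\delta$ at $\bar x$ into two pieces of heights $r$ and $\delta-r$, translates them to $x=0$ and $x=a$ (an energy-preserving rearrangement, with $r$ chosen to restore the area constraint), approximates, and obtains a limit $u\in\mathcal C^+_{a,h,L}$ with $u(0^+)\ge r>0$ \emph{and} $u(a^-)\le h-(\delta-r)<h$; this contradicts Lemma \ref{concaveconvex}, which forces the minimizer to have no jump at one of the two endpoints. You instead leave the jump in the interior and contradict the $W^{1,\infty}(0,a)$ interior regularity of the unique minimizer (Lemmas \ref{moreproperties} and \ref{uniquelemma}). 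Your route is arguably more direct, since it dispenses with the rearrangement and with Lemma \ref{concaveconvex}; what the paper's rearrangement buys is that one never has to track an interior singularity through the approximation, only boundary values.

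Two caveats. First, your exact identity $\mathcal F(\gamma)=\int_0^a g(\dot u_\gamma)\,dx+(u_\gamma)'_s([0,a])$ is more than you need and is delicate to prove when $\{\gamma_1'=0\}$ is not a finite union of intervals (Cantor-type singular parts); but your proof only uses the one-sided inequality $\mathcal F(\gamma)\ge\overline{\mathcal J}_+(u)$ for a $w^*$-$BV_{loc}$ limit $u$ of the functions associated with the piecewise affine approximants, which follows from $g\ge g^{**}$ and Lemma \ref{Gamma1}(a), exactly as in the paper's own argument -- and the uniform convergence of $\gamma^N$ to $\gamma$ does preserve a jump of size at least $\delta$ at $\bar x$ in the limit. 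Second, $\bar x\in(0,a)$ is \emph{not} a consequence of the stated hypothesis $0<t_1<t_2<1$ (a curve may have $\gamma_1\equiv 0$ on $[0,t_2]$ with $t_2<1$), and both your proof and the paper's genuinely need it: the paper's choice of $r$ satisfies $0<r<\delta$ only when $\bar x\in(0,a)$, and the actual minimizer from Theorem \ref{mainbis} does carry a vertical segment at $x=a$ parametrized by an interval with $t_2<1$. You correctly flag at the end that the statement must be read with $\bar x$ interior (equivalently, with $(t_1,t_2)$ a maximal interval of constancy of $\gamma_1$ compactly contained in $(0,1)$), which is also how the lemma is applied in the proof of Theorem \ref{main1}; just do not present $\bar x\in(0,a)$ as following from the hypothesis, as you do in your opening paragraph.
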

 \begin{proof}
 For $\gamma_2(t_1)<s<\gamma_2(t_2)$, we define $t_s$ as the unique number in $(t_1,t_2)$ such that $\gamma_2(t_s)=\gamma_2(t_1)+s$, $h_s:=\gamma_2(t_2)-\gamma_2(t_s)$ and
\[
\gamma^s(t):=\left\{
\begin{array}{ll}
(0,ts(t_s-t_1)^{-1})\quad &\mbox{ if $t\in[0,t_s-t_1]$}\\
\gamma(t-t_s+t_1)+(0,s)\quad &\mbox{ if $t\in(t_s-t_1, t_s]$}\\
\gamma(t+t_2-t_s)-(0,h_s)\quad &\mbox{ if $t\in(t_s, t_s+1-t_2)$}\\
(a, h-h_s)+(t_2-t_s)^{-1}(t-1+t_2+t_s)(0,h_s)\quad &\mbox{ if $t\in[t_s+1-t_2,1]$}.
\end{array}
\right.
\]
It is clear that for any $s$, $\mathcal{F}(\gamma^s)=\mathcal{F}(\gamma)$, since $\gamma^s$ is just obtained from $\gamma$ by rearrangement of pieces (by translations).
It is also clear that $s$ can be (uniquely)  chosen such that $\gamma_s\in\mathcal{A}_{a,h,L}^+$. Let $ r$ denote such value of $s$ and let $\breve\gamma:=\gamma^{r}$. We next define suitable approximations by means of Lemma \ref{infcurve}. We let
\[
\breve\gamma^N(t):=\left\{
\begin{array}{ll}
(0,tr(t_r-t_1)^{-1})\quad &\mbox{ if $t\in[0,t_r-t_1]$}\\
\gamma^N(t)\quad &\mbox{ if $t\in(t_r-t_1, t_r+1-t_2)$}\\
(a, h-h_r)+(t_2-t_r)^{-1}(t-1+t_2+t_r)(0,h_r)\quad &\mbox{ if $t\in[t_r+1-t_2,1]$},
\end{array}
\right.
\]
where $$\gamma^N:[t_{ r}-t_1,t_{r}+1-t_2]\to[0,a]\times[ r, h-h_{ r}]$$ are piecewise affine approximations  of $\breve\gamma{\big|}_{[t_r-t_1,t_r+1-t_2]}$,
with same initial point $\breve\gamma(t_r-t_1)=(0,r)$, same end point $\breve\gamma(t_r+1-t_2)=(a,h-h_r)$, with $(\gamma_2^N)'(t)\ge 0$ and $(\gamma_1^N)'(t)>0$ for a.e. $t\in(t_r-t_1,t_r+1-t_2)$. These approximating curves are constructed by means of Lemma \ref{infcurve}, so that $\breve\gamma^N\in\mathcal{A}_{a,h,L}$, $\breve\gamma_N\to\breve\gamma$ uniformly on $[t_{ r}-t_1,t_{r}+1-t_2]$ and $\mathcal{F}(\breve\gamma^N)\to\mathcal{F}(\breve\gamma)$ as $N\to+\infty$. 
Since $\gamma_1^N$ is strictly increasing we may define the piecewise affine function
 $u_N:=\gamma_2^N\circ(\gamma_1^N)^{-1}$ on $[0,a]$, that we extend to $\mathbb{R}$ by setting $u_N(x)=0$ if $x<0$ and $u_N(x)=h$ if $x>a$. By changing variables  as done in the proof of Lemma \ref{goodparametrization}  we get 
 \begin{equation}\label{rh}\begin{aligned}
 \overline{\mathcal{J}}_+(u_N)&=r+h-h_r+\int_0^a g^{**}(\dot u_N)\le r+h-h_r+\int_0^a \frac{\dot u_N(x)^3}{1+\dot u_N(x)^2}\,dx\\
 &=r+h-h_r+\int_{t_r-t_1}^{t_r+1-t_2}\frac{(\gamma_2^N)'(t)^3}{(\gamma_1^N)'(t)^2+(\gamma_2^N)'(t)^2}\,dt=\mathcal{F}(\breve\gamma^N) 
 \end{aligned}\end{equation}
 and
 \begin{equation*} \int_0^a u_N=\int_{t_r-t_1}^{t_r+1-t_2}\gamma_2^N(\gamma_1^N)'=\int_0^1\breve\gamma_2^N(\breve\gamma_1^N)',\qquad
\int_0^a|u_N'(x)|\,dx=\int_{t_r-t_1}^{t_r+1-t_2} |(\gamma_2^N)'(t)|\,dt\le h.
\end{equation*}
 Thanks to the latter estimate, $u_N$ admits a $w^*-BV_{loc}(\mathbb{R})$ limit $u$, which satisfies $u(0^+)\ge r$ and $u(a^-)\le h-h_r$. Up to extraction of a not relabeled subsequence, the convergence also holds strongly in $L^1(0,a)$, thus  $\int_0^a u=\lim_{N\to+\infty} \int_0^a u_n=L$ so that $u\in\mathcal{C}^+_{a,h,L}$. The lower semicontinuity of $\overline{\mathcal{J}}_+$  and \eqref{rh} entail
\[
\overline{\mathcal{J}}_+(u)\le\liminf_{N\to+\infty}\overline{\mathcal{J}}_+(u^N)\le\liminf_{N\to+\infty}\mathcal{F}(\breve\gamma^N)=\mathcal{F}(\breve\gamma)=\mathcal{F}(\gamma).
\] 
But $u(0^+)>0$ and $u(a^-)<h$, thus $u$ is not a minimizer of $\overline{\mathcal{J}}_+$ over $\mathcal{C}^+_{a,h,L}$ due to Lemma \ref{concaveconvex}. We conclude that $\mathcal{F}(\gamma)>\min_{\mathcal{C}^+_{a,h,L}}\overline{\mathcal{J}}_+$. By Lemma \ref{FGJ}, the result follows.
\end{proof}

\begin{lemma}\label{younglemma}
Let $\gamma\in\mathcal A_{a,h,L}^+$. Then $\mathcal{F}(\gamma)\ge h-a/2$ and equality holds if and only if $\gamma_1'(t)=\gamma_2'(t)$ for a.e. $t\in\{\gamma_1'(t)>0\}$.
\end{lemma}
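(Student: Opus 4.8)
The plan is to reduce everything to an elementary pointwise inequality. Since $\gamma\in\mathcal A^+_{a,h,L}$ has $\gamma_1'\ge0$ and $\gamma_2'\ge0$ a.e., while $|\gamma'|\neq0$ a.e., I would first rewrite the integrand using the identity
\[
\frac{(\gamma_2')_+^3}{(\gamma_1')^2+(\gamma_2')^2}=\frac{(\gamma_2')^3}{(\gamma_1')^2+(\gamma_2')^2}=\gamma_2'-\frac{(\gamma_1')^2\,\gamma_2'}{(\gamma_1')^2+(\gamma_2')^2},
\]
valid a.e.\ on $(0,1)$. Integrating and using $\int_0^1\gamma_2'\,dt=\gamma_2(1)-\gamma_2(0)=h$ yields
\[
\mathcal F(\gamma)=h-\int_0^1\frac{(\gamma_1')^2\,\gamma_2'}{(\gamma_1')^2+(\gamma_2')^2}\,dt,
\]
so the statement $\mathcal F(\gamma)\ge h-\tfrac a2$ is equivalent to the bound $\int_0^1\frac{(\gamma_1')^2\gamma_2'}{(\gamma_1')^2+(\gamma_2')^2}\,dt\le\tfrac a2$.

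To prove this bound, I would split $(0,1)$ according to the sign of $\gamma_1'$. On $\{\gamma_1'=0\}$ the integrand vanishes. On $\{\gamma_1'>0\}$, I would set $s:=\gamma_2'/\gamma_1'\ge0$ and use
\[
\frac{(\gamma_1')^2\gamma_2'}{(\gamma_1')^2+(\gamma_2')^2}=\gamma_1'\,\frac{s}{1+s^2}=\gamma_1'\left(\frac12-\frac{(1-s)^2}{2(1+s^2)}\right)\le\frac12\,\gamma_1',
\]
the inequality being the elementary fact $\tfrac{s}{1+s^2}\le\tfrac12$ for $s\ge0$. Since $\gamma_1'=0$ a.e.\ off $\{\gamma_1'>0\}$ and $\gamma_1'\ge0$ a.e., one has $\int_{\{\gamma_1'>0\}}\gamma_1'\,dt=\int_0^1\gamma_1'\,dt=\gamma_1(1)-\gamma_1(0)=a$, whence $\int_0^1\frac{(\gamma_1')^2\gamma_2'}{(\gamma_1')^2+(\gamma_2')^2}\,dt\le\tfrac12\int_{\{\gamma_1'>0\}}\gamma_1'\,dt=\tfrac a2$, giving $\mathcal F(\gamma)\ge h-\tfrac a2$.

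For the equality characterization I would trace back which step is not an identity. The only inequality used is $\gamma_1'\bigl(\tfrac12-\tfrac{(1-s)^2}{2(1+s^2)}\bigr)\le\tfrac12\gamma_1'$ on $\{\gamma_1'>0\}$; hence $\mathcal F(\gamma)=h-\tfrac a2$ forces $\int_{\{\gamma_1'>0\}}\gamma_1'\,\tfrac{(1-s)^2}{2(1+s^2)}\,dt=0$, and since there $\gamma_1'>0$ and the other factor is nonnegative, this is exactly $s=1$ a.e.\ on $\{\gamma_1'>0\}$, i.e.\ $\gamma_1'=\gamma_2'$ a.e.\ on that set. Conversely, if $\gamma_1'=\gamma_2'$ a.e.\ on $\{\gamma_1'>0\}$ then $\tfrac{s}{1+s^2}=\tfrac12$ there and all the above inequalities become equalities, so $\mathcal F(\gamma)=h-\tfrac a2$. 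I do not expect any genuine obstacle: the argument is a short direct computation, the only points requiring a little care being the almost-everywhere manipulations (justified by $|\gamma'|\neq0$ a.e.\ and $\gamma_1',\gamma_2'\ge0$ a.e.) and the elementary bound $\tfrac{z}{1+z^2}\le\tfrac12$.
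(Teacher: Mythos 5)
Your proof is correct and follows essentially the same route as the paper: both rewrite $\mathcal F(\gamma)=h-\int_0^1\frac{\gamma_1'\gamma_2'}{(\gamma_1')^2+(\gamma_2')^2}\,\gamma_1'\,dt$ and bound the ratio by $\tfrac12$ via Young's inequality $2\alpha\beta\le\alpha^2+\beta^2$ (your substitution $s=\gamma_2'/\gamma_1'$ and the identity $\tfrac{s}{1+s^2}=\tfrac12-\tfrac{(1-s)^2}{2(1+s^2)}$ is just this in disguise), with the same equality analysis. No issues.
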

\begin{proof}
Let $\gamma\in\mathcal A_{a,h,L}^+$.
We have
\begin{equation*}\label{young}
\begin{aligned}
\mathcal{F}(\gamma)&=\int_0^1\left(\gamma_2'(t)+\frac{\gamma_2'(t)^3}{\gamma_1'(t)^2+\gamma_2'(t)^2}-\gamma_2'(t)\right)\,dt\\
&=h-\int_0^1\frac{\gamma_1'(t)\gamma_2'(t)}{\gamma_1'(t)^2+\gamma_2'(t)^2}\,\gamma_1'(t)\,dt\ge h-\frac12\int_0^1\gamma_1'(t)\,dt=h-\frac a 2.
\end{aligned}
\end{equation*}
Equality holds if and only if $\gamma_1'=\gamma_2'$ a.e. on $\{\gamma_1'(t)>0\}$, since the Young inequality $2\alpha\beta\le \alpha^2+\beta^2$ is an equality if and only if $\alpha=\beta$.
\end{proof}


We are ready for the proof of the main results. 

\begin{proofad1}  

Let us start by proving existence. 
Take $u_*\in \mathcal{C}^+_{a,h,L}$ from Lemma \ref{moreproperties}, such that $u_*\in\argmin_{\mathcal{C}^+_{a,h,L}}\overline{\mathcal{J}}_+$, $u_{*}\in W^{1,\infty}(0,a)$ and $0\le \dot u_*\le 1$ a.e. in $(0,a)$.
We have seen in the proof of Lemma \ref{FGJ} that there exists $\gamma_*\in\mathcal{A}_{a,h,L}$ such that 
$\overline{\mathcal{J}}_+(u_*)=\mathcal{F}(\gamma_*)= \inf\mathcal{F}$. This concludes the proof.
We also stress that from \eqref{gammastar} we deduce $\gamma_*\in\mathcal{A}^+_{a,h,L}$, which is the class defined in \eqref{plusclass}. In fact, any solution to problem \eqref{problem1} belongs to $\mathcal{A}^+_{a,h,L}$ by Lemma \eqref{+class}.

Let us prove {\it i)}.
Suppose  that $2L\notin (a^2, 2ah-a^2)$.
  Let $\gamma$ be an element of $\mathcal{A}^+_{a,h,L}$ that solves problem \eqref{problem1}, 
   so that $\mathcal{F}(\gamma)=\min_{\mathcal{C}^+_{a,h,L}} \overline{\mathcal{J}}_+$. Taking advantage of Lemma \ref{nojumps}, there exist
$0\le t_1<t_2\le 1$ such that $\gamma_1$ is constant on $[0,t_1]$ and $[t_2,1]$, and it is strictly increasing on $[t_1,t_2]$. We let
 $\bar\gamma=(\bar\gamma_1,\bar\gamma_2)$ denote the restriction of $\gamma$ to $[t_1,t_2]$ and we  define a monotonic $BV_{loc}(\mathbb{R})$ function by
$u(x)=\bar\gamma_2\circ\bar\gamma_1^{-1}(x)$ for $x\in (0,a)$ 
(extended to $\mathbb{R}$ by $u(x)=0$ if $x<0$ and $u(x)=h$ if $x>a$).
By invoking Lemma \ref{infcurve} as done in the proof of Lemma \ref{nojumps}, we introduce  piecewise affine approximations $\gamma^N\in\mathcal{A}^+_{a,h,L}$ of $\gamma$, with $(\gamma_1^N)'(t)>0$ for a.e. $t\in(0,1)$, so that $\mathcal{F}(\gamma^N)\to \mathcal{F}(\gamma)$ and $\gamma^N\to\gamma$ uniformly on $[0,1]$ as $N\to+\infty$. As a consequence, letting $u_N:=\gamma_2^N\circ(\gamma_1^N)^{-1}$ in $(0,a)$ (extended to $\mathbb{R}$ by $u_N(x)=0$ if $x<0$ and $u^N(x)=h$ if $x>a$) there also holds $u_N\to u$ pointwise a.e. in $\mathbb{R}$  as $N\to+\infty$.
By changing variables we get 
\[
\int_0^a u_N=\int_0^a \gamma_2^N((\gamma_1^N)^{-1}(x))\,dx
=\int_{0}^{1}\gamma_2^N(t)(\gamma_1^N)'(t)\,dt=\int_{0}^{1}\gamma_2(t)\gamma_1'(t)\,dt=L,
\]
so that $u_N\in\mathcal{C}^+_{a,h,L}$ for any $N$, and (by using $g^{**}\le g$) 
 \begin{equation*}
 \overline{\mathcal{J}}_+(u_N)\le \int_0^a g^{**}(\dot u_N)=\int_{0}^{1}\frac{(\gamma_2^N)'(t)^3}{(\gamma_1^N)'(t)^2+(\gamma_2^N)'(t)^2}\,dt=\mathcal{F}(\gamma^N). 
 \end{equation*}
  A $w^*-BV_{loc}(\mathbb{R})$ limit point of $u_N$ necessarily coincides with $u$ since   $w^*-BV_{loc}(\mathbb{R})$ and pointwise a.e. limit coincide.
 By passing to the limit with the $w^*-BV_{loc}(\mathbb{R})$ lower semicontinuity of $\overline{\mathcal{J}}_+$ we get $\overline{\mathcal{J}}_+(u)\le\mathcal{F}(\gamma).$
But Lemma \ref{FGJ} and Theorem \ref{main1} yield $\mathcal{F}(\gamma)=\inf\mathcal{F}=\min_{\mathcal{C}^+_{a,h,L}}\overline{\mathcal{J}}_+$. We conclude that $u$ coincides with the unique minimizer $u_*$ of $ \overline{\mathcal{J}}_+$ over $\mathcal{C}^+_{a,h,L}$ provided by Lemma \ref{uniquelemma}. Hence the curve $\gamma$ necessarily coincides with the graph of $u_*$ on $(0,a)$ plus the possible vertical segments at $x=0$ or $x=a$. This concludes the proof of {\it i)}.

Eventually, let us prove the statement {\it ii)}. 
Suppose  that $h>a$ and $a^2<2L< a(2h-a)$. All the piecewise affine curves $\gamma$ in $\mathcal A^+_{a,h,L}$ that are constructed in Section \ref{mainresults} after the statement of Theorem \ref{main3} satisfy $\mathcal F(\gamma)=h-a/2$ as seen in \eqref{piecewiseenergy}. Therefore, they solve problem \ref{problem1} thanks to Lemma \ref{younglemma}.
\end{proofad1}

Let us now give a precise characterization of solutions in the nonuniqueness range, by proving Theorem \ref{main3}. 

\begin{proofad3}

Suppose  that $h>a$ and $a^2<2L< a(2h-a)$. By Lemma \ref{+class} any solution to problem \eqref{problem1} belongs to $\mathcal A^+_{a,h,L}$.  As $\gamma^\circ\in\mathcal A_{a,h,L}^+$ and $\mathcal F(\gamma^\circ)=h-a/2$, we conclude that  $\gamma^\circ$ solves problem \eqref{problem1} as a consequence of Lemma \ref{younglemma}. More generally, still by Lemma \ref{younglemma}, $\gamma$ is solution to problem \eqref{problem1} if and only if $\gamma\in\mathcal A^+_{a,h,L}$ and $\gamma_1'=\gamma_2'$ a.e. on $\{\gamma_1'>0\}$.  It is clear that
$\gamma^\circ$ is the unique curve in the latter class such that the set $\{\gamma_1'(t)>0\}$ is an interval $(t_1,t_2)$ for some $0<t_1<t_2<1$. 
\end{proofad3}

\begin{remark}\rm Suppose  that $h>a$ and $a^2<2L< a(2h-a)$.
$\gamma^\circ$ corresponds indeed to the unique minimizer of $\overline{\mathcal{J}}_+$ among functions $u$ in $\mathcal{C}^+_{a,h,L}$ such that $u\in W^{1,\infty}(0,a)$ with $\dot u=1$ on $(0,a)$, see 
   Lemma \ref{uniquelemma} and Remark \ref{infinity}.
\end{remark}

The proof of Theorem \ref{mainbis} relies on a careful application of the Euler-Lagrange equation and it requires some preliminary lemmas.
We shall provide a parametrization in terms of $\dot u$ as originally done by Euler in the solution of Proposition 53 in Scientia Navalis \cite{E}. 
 Without loss of generality, as we have pointed out in Lemma 4.11 and in Remark 4.13, we may consider only the case of convex solutions. 
 We start by proving the following\EEE
\begin{lemma} \label{explicit1}
Assume that 
$2L\le (ah)\wedge a^2$ 
holds true and let $\Psi,\ \Phi, \ \mathcal T$ as in \eqref{L}, \eqref{phi} and \eqref{T} respectively.
Then
\begin{equation*}
\min_{\mathcal{C}^+_{a,h,L}}\overline{\mathcal J}_{+}=\min _{\mathcal T}\Psi.
\end{equation*}
\end{lemma}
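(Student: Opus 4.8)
The plan is to recognize $\Phi$ and $\Psi$ as, respectively, the area $\int_0^a u$ and the energy $\overline{\mathcal J}_{+}(u)$ of an admissible profile $u$, both rewritten in terms of the slope $\dot u$; minimizing $\Psi$ over $\mathcal T$ then becomes the minimization of $\overline{\mathcal J}_{+}$ over $\mathcal{C}^+_{a,h,L}$. I would split the proof into three steps.

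\emph{Step 1 (explicit form of a minimizer).} By Lemma~\ref{infJ+}, Lemma~\ref{moreproperties} and Corollary~\ref{convconc}, in the range $2L\le(ah)\wedge a^2$ there is a minimizer $u_{*}$ of $\overline{\mathcal J}_{+}$ over $\mathcal{C}^+_{a,h,L}$ which is convex, with $u_{*}(0^+)=0$ and $0\le\dot u_{*}\le1$ a.e.\ in $(0,a)$; by Lemma~\ref{minJbar}(iv) there are $\overline\lambda\ge0$ and $\overline\mu$ with $g'(\dot u_{*}(x))=\overline\lambda x+\overline\mu$ a.e.\ in $(0,a)$. Put $\xi_{*}:=\dot u_{*}(0^+)$, $\eta_{*}:=\dot u_{*}(a^-)\in[0,1]$, so $\overline\mu=g'(\xi_{*})$ and $\overline\lambda a=g'(\eta_{*})-g'(\xi_{*})$. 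If $\xi_{*}=\eta_{*}$ then $\dot u_{*}\equiv 2L/a^2\le1$ and $u_{*}(a^-)=2L/a\le h$ (the piecewise affine case $2L=(ah)\wedge a^2$ of Theorem~\ref{explicit}); a direct substitution then gives $\Phi(2L/a^2,2L/a^2)=L$ and, by Lemma~\ref{representationlemma}, $\overline{\mathcal J}_{+}(u_{*})=h+a\bigl(g(2L/a^2)-2L/a^2\bigr)=\Psi(2L/a^2,2L/a^2)$, and we are done. So assume $\xi_{*}<\eta_{*}$. Inverting the Euler--Lagrange relation gives $x=a\,\dfrac{g'(\dot u_{*}(x))-g'(\xi_{*})}{g'(\eta_{*})-g'(\xi_{*})}$, i.e.\ $\dot u_{*}$ is the inverse of $\tau\mapsto a(g'(\tau)-g'(\xi_{*}))/(g'(\eta_{*})-g'(\xi_{*}))$ on $[\xi_{*},\eta_{*}]$, so $u_{*}$ is exactly the profile \eqref{parametrbis}.

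\emph{Step 2 (the two identities).} For the area I would use Fubini, $\int_0^a u_{*}=\int_0^a\dot u_{*}(t)(a-t)\,dt$, change variables $t=a(g'(s)-g'(\xi_{*}))/(g'(\eta_{*})-g'(\xi_{*}))$ (so that $a-t$ and $dt$ are expressed through $g'(\eta_{*})-g'(s)$ and $g''(s)$), and integrate by parts via $\tfrac{d}{ds}(g'(\eta_{*})-g'(s))^2=-2(g'(\eta_{*})-g'(s))g''(s)$; this produces precisely $\int_0^a u_{*}=\Phi(\xi_{*},\eta_{*})$, hence $(\xi_{*},\eta_{*})\in\mathcal T$ since $u_{*}\in\mathcal{C}^+_{a,h,L}$. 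For the energy, Lemma~\ref{representationlemma} together with $g^{**}=g$ on $[0,1]$ gives $\overline{\mathcal J}_{+}(u_{*})=h+\int_0^a(g(\dot u_{*})-\dot u_{*})\,dx$; using the elementary identities $g(t)-t=-t/(1+t^2)$, $\tfrac{d}{dt}\tfrac{t}{1+t^2}=\tfrac{1-t^2}{(1+t^2)^2}$ and $1-g'(t)=\tfrac{1-t^2}{(1+t^2)^2}$, the same change of variables followed by an integration by parts recasts this quantity exactly as $\Psi(\xi_{*},\eta_{*})$ of \eqref{L}. Since $\mathcal T$ is a nonempty compact subset of $[0,1]^2$ and $\Psi$ is continuous, $\min_{\mathcal T}\Psi$ exists, and Steps~1--2 give $\min_{\mathcal{C}^+_{a,h,L}}\overline{\mathcal J}_{+}=\Psi(\xi_{*},\eta_{*})\ge\min_{\mathcal T}\Psi$.

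\emph{Step 3 (reverse inequality) and the main obstacle.} Conversely, take $(\xi,\eta)\in\mathcal T$. For $\xi<\eta$ let $u^{\xi,\eta}$ be defined by letting $\dot u^{\xi,\eta}(x)$ be the inverse at $x$ of $\tau\mapsto a(g'(\tau)-g'(\xi))/(g'(\eta)-g'(\xi))$ and $u^{\xi,\eta}(0^+)=0$ (and $u^{\xi,\eta}(x)=\xi x$ if $\xi=\eta$, which forces $\xi=2L/a^2$). Then $0\le\dot u^{\xi,\eta}\le1$, $\int_0^a u^{\xi,\eta}=\Phi(\xi,\eta)=L$, and, \emph{provided $u^{\xi,\eta}$ remains in the strip $[0,h]$}, one gets $u^{\xi,\eta}\in\mathcal{C}^+_{a,h,L}$ with $\overline{\mathcal J}_{+}(u^{\xi,\eta})=\Psi(\xi,\eta)$ by the same computations of Step~2, whence $\Psi(\xi,\eta)\ge\min_{\mathcal{C}^+_{a,h,L}}\overline{\mathcal J}_{+}$. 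The hard part is exactly this strip containment: I must show that $\Phi(\xi,\eta)=L\le\tfrac12\bigl((ah)\wedge a^2\bigr)$ forces $u^{\xi,\eta}(a^-)=\dfrac{a\bigl(\eta g'(\eta)-g(\eta)-\xi g'(\xi)+g(\xi)\bigr)}{g'(\eta)-g'(\xi)}\le h$ for every $(\xi,\eta)\in\mathcal T$ (equivalently, that the minimum of $\Psi$ on $\mathcal T$ is realized by an admissible profile). My approach would be a convexity comparison: since $u^{\xi,\eta}$ is convex with $u^{\xi,\eta}(0^+)=0$ and slopes in $[\xi,\eta]\subseteq[0,1]$, comparing with the supporting line of slope $\xi$ at $x=0$ and the supporting line of slope $\eta$ at $x=a$ yields $L=\int_0^a u^{\xi,\eta}\ge\tfrac{\xi a^2}{2}+\tfrac{(u^{\xi,\eta}(a^-)-\xi a)^2}{2(\eta-\xi)}$; inserting $\eta\le1$ and splitting according to whether $h\le a$ (so $2L\le ah$) or $h>a$ (so $2L\le a^2$) should give $u^{\xi,\eta}(a^-)\le h$. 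Combining this with Step~2 yields $\min_{\mathcal{C}^+_{a,h,L}}\overline{\mathcal J}_{+}=\min_{\mathcal T}\Psi$.
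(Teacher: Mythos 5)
Your Steps 1 and 2 follow the paper's proof essentially verbatim: the Euler--Lagrange relation $g'(\dot u_*)=\overline\lambda x+\overline\mu$ from Lemma \ref{minJbar}, the change of variables $t=\dot u_*(x)$, and the two identities $\int_0^a u_*=\Phi(\dot u_*(0),\dot u_*(a))$ and $\overline{\mathcal J}_+(u_*)=\Psi(\dot u_*(0),\dot u_*(a))$ are exactly \eqref{ellephi} and \eqref{J=L}, and they correctly give $\min_{\mathcal{C}^+_{a,h,L}}\overline{\mathcal J}_{+}\ge\min_{\mathcal T}\Psi$ (including the degenerate case $\xi_*=\eta_*$).

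The gap is in Step 3, and it is not merely a computation left to the reader: the statement you set out to prove there is false. It is not true that every $(\xi,\eta)\in\mathcal T$ generates a profile contained in the strip $[0,h]$. Take $a=1$, $\xi=0$ and $\eta$ small: then $u^{0,\eta}(a^-)=\eta-g(\eta)/g'(\eta)\approx\tfrac{2}{3}\eta$, while $L=\Phi(0,\eta)=\tfrac12\int_0^\eta\bigl(1-g'(t)/g'(\eta)\bigr)^2dt\approx\tfrac{4}{15}\eta$, so $2L\approx\tfrac{8}{15}\eta$; any $h$ with $\tfrac{8}{15}\eta\le h<\tfrac{2}{3}\eta$ satisfies $2L\le (ah)\wedge a^2$ and puts $(0,\eta)$ in $\mathcal T$, yet $u^{0,\eta}(a^-)>h$. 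Consistently, your supporting-line inequality $L\ge\tfrac{\xi a^2}{2}+\tfrac{(u(a^-)-\xi a)^2}{2(\eta-\xi)}$ (which is correct) only yields $u(a^-)\le\xi a+\sqrt{(2L-\xi a^2)(1-\xi)}$, and forcing the right-hand side below $h$ under $2L\le ah$ reduces in the worst case to $a\le h$ --- i.e.\ it closes precisely in the regime where $u(a^-)\le a\le h$ is automatic, and cannot close in the regime $h<a$ where you need it. The fix, which is what the paper does, is to never invoke strip containment on all of $\mathcal T$: the inequality $\min_{\mathcal T}\Psi\ge\min_{\mathcal{C}^+_{a,h,L}}\overline{\mathcal J}_{+}$ only requires the identification $\Psi(\xi_*,\eta_*)=\overline{\mathcal J}_+(u^{\xi_*,\eta_*})$ at a \emph{minimizer} $(\xi_*,\eta_*)$ of $\Psi$ over $\mathcal T$. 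The paper achieves this by introducing the class $\mathcal K$ of convex $C^1$ Euler--Lagrange extremals with $u(0)=0$, $0\le\dot u\le 1$, $\int_0^a u=L$, proving $\min_{\mathcal{C}^+_{a,h,L}}\overline{\mathcal J}_{+}=\min_{\mathcal K}\overline{\mathcal J}_{+}=\min\{\Psi(\dot u(0),\dot u(a)):u\in\mathcal K\}$, and then showing that the argmin of $\Psi$ on $\mathcal T$ is realized by an element of $\mathcal K$; the bound $u_*(a^-)<h$ is established only for that particular profile (see Lemma \ref{ultimolemma}). You should restructure Step 3 accordingly, arguing only at the $\Psi$-minimizer rather than at arbitrary points of $\mathcal T$.
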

\begin{proof} Since $2L\le (ah)\wedge a^2$,  then by Lemma
\ref{uniquelemma}  there exists a unique $u_{*}\in \argmin_{\mathcal C^+_{a,h,L}} \overline{\mathcal J}_{+}$, $u_{*}\in W^{1,\infty}(0,a)$  and $0\le \dot u_{*}\le 1$. By Corollary \ref{convconc} $u_{*}$ is convex in $(0,a)$, $u_{*}(0)=0$ and finally by Lemma \ref{minJbar} there exist $\overline \lambda, \ \overline\mu\in \mathbb R$ such that
\begin{equation}\label{eulereq2}
g'(\dot u_{*} )=\overline\lambda x+\overline\mu\ \ \hbox{\ a.e. in }\ (0,a)
\end{equation}
hence  $\overline\mu= g'(\dot u_{*}(0) )$ and $\overline\lambda= a^{-1}(g'(\dot u_{*}(a) )-g'(\dot u_{*}(0) ))\ge 0$ since $g'$ is increasing in $[0,1]$ and $\dot u_{*}(a) \ge \dot u_{*}(0)$ by convexity of $u$.
Moreover, due to continuity of the right hand side, \eqref{eulereq2} holds  everywhere in $[0,a]$ and $\dot u_{*}$ is continuous therein;  therefore the set 
\begin{equation*}
\mathcal{K}:=\left\{u\in C^1([0,a]): u(0)=0,\, \, \int_0^au=L,\,\; u \mbox{ convex},\,\; 0\le \dot u\le 1,\,\; g'(\dot u(x) )=\overline \lambda x+\overline \mu\right\}
\end{equation*}
is nonempty  and
\begin{equation*}
\min_{\mathcal C^+_{a,h,L}} \overline{\mathcal J}_{+}= \min_{\mathcal K} \overline{\mathcal J}_{+}.
\end{equation*}

If  $u\in \mathcal K$ and $\dot u(0) < \dot u(a)$ then $\dot u$ is strictly increasing and by setting $t:= \dot u(x)$,  taking into account that  $g'(\dot u(x))=\overline\lambda x+\overline\mu$  and that \BBB $u(0)=0$, it is readily seen that
the curve $\sigma(x):= (x, u(x)),\ x\in [0,a]$, is equivalent to the one  parametrized by 
\begin{equation*}
\displaystyle
 x(t):=\dot u^{-1}(t)\BBB=\frac{a(g'(t)-g'(\dot u(0))}{g'(\dot u(a))-g'(\dot u(0))},\,\ \ y(t):=\int_{\dot u(0)}^{t}sx'(s)\,ds,\ \ t\in [\dot u(0), \dot u(a)]
\end{equation*}
and a direct computation using Lemma \ref{representationlemma} shows that 
\begin{equation}\label{J=L}
\begin{aligned}
\overline{\mathcal J}_{+}(u)&=h+\int_0^a (g^{**}(\dot u(x))-\dot u(x))\,dx=h-\int_0^a\frac{\dot u(x)}{1+\dot u(x)^2}\,dx
=h-\int_{\dot u(0)}^{\dot u(a)}\frac{t}{1+t^2}\,x'(t)\,dt\\
&=h-\frac{a\dot u(a)}{1+\dot u(a)^2}+\int_{\dot u(0)}^{\dot u(a)}\frac{1-t^2}{(1+t^2)^2}\frac{g'(t)-g'(\dot u(0))}{g'(\dot u(a))-g'(\dot u(0))}\,dt
=\Psi(\dot u(0), \dot u(a)).
\end{aligned}\end{equation} 
Moreover, by using again the change of variable $t:= \dot u(x)$, taking into account that $u(0)=0$, $x(\dot u(0))=0$, $x(\dot u(a))=a$,  the area constraint becomes
\begin{equation}\label{ellephi}
\begin{aligned}
L&=\int_0^a u(x)\,dx=\int_0^a (a-u(x))\dot u(x)\,dx=\int_{\dot u(0)}^{\dot u(a)}(a-x(t))\,tx'(t)\,dt\\&=\frac{a^2}2\,\dot u(0)+\frac12\int_{\dot u(0)}^{\dot u(a)}(a-x(t))^2\,dt=\frac{a^2}2\,\dot u(0)+\frac{a^2}2\int_{\dot u(0)}^{\dot u(a)}\left(\frac{g'(\dot u(a))-g'(t)}{g'(\dot u(a))-g'(\dot u(0))}\right)^2dt\\&=\Phi(\dot u(0),\dot u(a)).
\end{aligned}
\end{equation}\BBB
On the other hand if $\dot u(0) = \dot u(a)$ then $\dot u(x)\equiv \dot u(0)$ and since $\int_{0}^{a}u=L$ we get $\dot u(x)\equiv 2La^{-2}$ and  \begin{equation}\label{lambdazero}
\overline{\mathcal J}_{+}(u)=\Psi(2La^{-2}, 2La^{-2}),\qquad \Phi(2La^{-2}, 2La^{-2})=L.
\end{equation}
That is, \eqref{J=L} holds true for every $u\in \mathcal K$.
Hence 
\begin{equation*}
\min_{\mathcal K} \overline{\mathcal J}_{+}(u)=\min \{\Psi(\dot u(0), \dot u(a)): u\in \mathcal K\}
\end{equation*}
and by noticing that $\{(\dot u(0), \dot u(a)): u\in \mathcal K\}\subset \mathcal T$ we get
\begin{equation*}
\min_{\mathcal K} \overline{\mathcal J}_{+}(u)=\min \{\Psi(\dot u(0), \dot u(a)): u\in \mathcal K\}\ge \min_{\mathcal T}\Psi.
\end{equation*}

We claim that if $(\xi_{*},\eta_{*})\in \argmin_{\mathcal T} \Psi$ then there exists $u_{*}\in \mathcal K$ such that $(\dot u_{*}(0), \dot u_{*}(a))=(\xi_{*},\eta_{*})$: indeed if $\xi_{*}=\eta_{*}$ then by \eqref{phi} we get $\Phi(\xi_{*},\xi_{*})=a^{2}\xi_{*}/2=L$, that is $\xi_{*}=\eta_{*}= 2L/a^{2}$ and therefore it is enough to choose $u_{*}(x)=\xi_{*} x$; otherwise we have $\xi_{*} < \eta_{*}$ and we may define a parametrized curve by
\begin{equation}\label{parametr}\displaystyle
x_{*}(t):=\frac{a(g'(t)-g'(\xi_{*}))}{g'(\eta_{*})-g'(\xi_{*})};\ \ y_{*}(t):=\int_{\xi_{*}}^{t}sx'(s)\,ds,\ \ t\in [\xi_{*}, \eta_{*}].
\end{equation}
It is readily seen that $x_{*}(t)$ is strictly increasing from $[\xi_{*}, \eta_{*}]$ onto $[0,a]$ and by denoting with $\varphi$ its inverse we define $u_{*}(x):= y_{*}(\varphi(x))$. A direct computation shows that $u_{*}$ is differentiable in $(0,a)$  and $\dot u_{*}(x)=\varphi(x)$ therein, so it is easy to see that $u_{*}\in \mathcal K$ and $\dot u_{*}(0)=\varphi(0)=\xi_{*},\ \dot u_{*}(a)=\varphi(a)=\eta_{*}$ thus proving the claim. Therefore
$$\min \{\Psi(\dot u(0), \dot u(a)): u\in \mathcal K\}\le \min_{\mathcal T}\Psi$$
and the proof is achieved.
\end{proof}
\begin{lemma}\label{xi<eta} Assume that $ 2L\le (ah)\wedge a^2$. Then there exists a unique  $(\xi_{*}, \eta_{*})\in \argmin_{\mathcal T}\Psi$. Moreover, if $2L<(ah)\wedge a^2$, then $\xi_{*}< \eta_{*}$.
\end{lemma}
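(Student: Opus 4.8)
The plan is to deduce both existence and uniqueness from the identity $\min_{\mathcal{C}^+_{a,h,L}}\overline{\mathcal J}_+=\min_{\mathcal T}\Psi$ of Lemma~\ref{explicit1} together with the uniqueness of the minimizer of $\overline{\mathcal J}_+$ given by Lemma~\ref{uniquelemma}, and then to obtain the strict inequality $\xi_*<\eta_*$ by a first-order perturbation of the affine competitor.

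For \emph{existence}, I would first record that, $g'$ being continuous and strictly increasing on $[0,1]$ (one checks $g''>0$ on $(0,1)$), the functions $\Phi$ of \eqref{phi} and $\Psi$ of \eqref{L} are continuous on the compact triangle $\mathcal D:=\{(\xi,\eta):0\le\xi\le\eta\le1\}$: off the diagonal this is dominated convergence (the denominators $g'(\eta)-g'(\xi)$ stay bounded away from $0$), and at a diagonal point the integral terms vanish in the limit, their integrands being bounded while the interval of integration collapses. Then $\mathcal T=\Phi^{-1}(\{L\})\cap\mathcal D$ is compact, and it is nonempty: since $2L\le a^2$, the point $(2L/a^2,\,2L/a^2)$ lies in $\mathcal D$ and $\Phi(2L/a^2,2L/a^2)=a^2(2L/a^2)/2=L$. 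By Weierstrass, $\argmin_{\mathcal T}\Psi\neq\emptyset$.

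For \emph{uniqueness}, since $2L\le a^2$ we have $2L\notin(a^2,2ah-a^2)$, so by Lemma~\ref{uniquelemma} there is a unique minimizer $u_*$ of $\overline{\mathcal J}_+$ over $\mathcal{C}^+_{a,h,L}$; by Corollary~\ref{convconc} and point iv) of Lemma~\ref{minJbar} it is convex, $u_*(0)=0$, $0\le\dot u_*\le1$, with $\dot u_*$ continuous and monotone. Given any $(\xi_0,\eta_0)\in\argmin_{\mathcal T}\Psi$, the construction used in the proof of Lemma~\ref{explicit1} (the affine function of slope $2L/a^2$ when $\xi_0=\eta_0$, the hypocycloidal branch \eqref{parametr} when $\xi_0<\eta_0$) produces $u_0\in\mathcal{C}^+_{a,h,L}$ with $\dot u_0(0)=\xi_0$, $\dot u_0(a)=\eta_0$, $\dot u_0$ continuous, and $\overline{\mathcal J}_+(u_0)=\Psi(\xi_0,\eta_0)=\min_{\mathcal T}\Psi=\min_{\mathcal{C}^+_{a,h,L}}\overline{\mathcal J}_+$; hence $u_0$ minimizes $\overline{\mathcal J}_+$, so $u_0=u_*$ by uniqueness, and reading off the slopes at $0$ and at $a$ forces $(\xi_0,\eta_0)=(\dot u_*(0),\dot u_*(a))$, the same for every minimizer. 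The step I expect to be the main obstacle is precisely this reduction: one must be sure that the function manufactured from a $\Psi$-minimizer of $\mathcal T$ is genuinely admissible, i.e.\ fits the strip ($u_0(a)\le h$), which is exactly what is already secured inside the proof of Lemma~\ref{explicit1}.

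For the \emph{strict inequality}, suppose $2L<(ah)\wedge a^2$ and, for contradiction, $\xi_*=\eta_*$. By the previous step $\xi_*=\dot u_*(0)$, $\eta_*=\dot u_*(a)$; since $g'(\dot u_*)$ is affine and $g'$ is injective, $\xi_*=\eta_*$ forces $\dot u_*$ constant, whence $u_*(x)=cx$ on $(0,a)$ with $c:=2L/a^2\in(0,1)$ (using $u_*(0)=0$, $\int_0^au_*=L$). For small $\epsilon>0$ I would test $\overline{\mathcal J}_+$ on the function $u_\epsilon$ equal to $cx+\epsilon\big(\tfrac{x^2}{2}-\tfrac{ax}{3}\big)$ on $(0,a)$, extended by $0$ for $x<0$ and $h$ for $x>a$: since $\int_0^a\big(\tfrac{x^2}{2}-\tfrac{ax}{3}\big)\,dx=0$ it satisfies the area constraint, $0<c<1$ makes $\dot u_\epsilon\in[0,1]$ and nondecreasing on $(0,a)$ for $\epsilon$ small, and $2L<ah$ (i.e.\ $ca<h$) gives $u_\epsilon(a^-)<h$, so $u_\epsilon\in\mathcal{C}^+_{a,h,L}$. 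By Lemma~\ref{representationlemma}, $\overline{\mathcal J}_+(u_\epsilon)=h-\int_0^a\tfrac{\dot u_\epsilon}{1+\dot u_\epsilon^2}\,dx$ is $C^1$ in $\epsilon$ near $0$ with derivative at $\epsilon=0$ equal to $-\psi_0'(c)\int_0^a\big(x-\tfrac{a}{3}\big)\,dx=-\psi_0'(c)\,\tfrac{a^2}{6}<0$, where $\psi_0(z):=z/(1+z^2)$ and $\psi_0'(c)=(1-c^2)/(1+c^2)^2>0$ because $c<1$. Hence $\overline{\mathcal J}_+(u_\epsilon)<\overline{\mathcal J}_+(u_*)$ for small $\epsilon>0$, contradicting minimality of $u_*$; therefore $\xi_*<\eta_*$.
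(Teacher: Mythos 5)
Your proof is correct. For existence and uniqueness you follow essentially the paper's route: each minimizer of $\Psi$ on $\mathcal T$ is matched, through the construction in the proof of Lemma \ref{explicit1}, with an element of $\mathcal{C}^+_{a,h,L}$ realizing $\min_{\mathcal{C}^+_{a,h,L}}\overline{\mathcal J}_{+}$, and Lemma \ref{uniquelemma} then pins down the pair $(\dot u_*(0),\dot u_*(a))$; the paper leaves the Weierstrass step implicit, so your explicit verification of the compactness of $\mathcal T$ and of the continuity of $\Phi$ and $\Psi$ up to the diagonal is a welcome addition rather than a deviation. For the strict inequality $\xi_*<\eta_*$ you take a genuinely different route. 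The paper argues entirely in the $(\xi,\eta)$-plane: it moves along the line $(a^2/2-L)\xi+L\eta=L$ through $(2L/a^2,2L/a^2)$, realized by two-slope piecewise affine competitors with a break at $x=a-\sqrt{2L}$, and shows that the one-sided derivative of $\eta\mapsto\Psi\bigl(\tfrac{2L(1-\eta)}{a^2-2L},\eta\bigr)$ at $\eta=2L/a^2$ is strictly negative, contradicting minimality of $(\xi_*,\eta_*)$ on $\mathcal T$. You instead return to the relaxed functional: the assumption $\xi_*=\eta_*$ forces the unique minimizer to be $u_*(x)=cx$ with $c=2L/a^2\in(0,1)$, and the zero-mean quadratic perturbation $\epsilon\bigl(\tfrac{x^2}{2}-\tfrac{ax}{3}\bigr)$ produces a strictly negative first variation of $\overline{\mathcal J}_{+}$, contradicting minimality of $u_*$ over $\mathcal{C}^+_{a,h,L}$. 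Your variant buys a very transparent admissibility check (zero mean preserves the area constraint, $0<c<1$ keeps the slope in $[0,1]$ and nondecreasing, and $2L<ah$ leaves room at $x=a$) and never requires verifying that a perturbed pair still lies on the constraint set $\mathcal T$; the cost is that it leans once more on the identification $\min_{\mathcal T}\Psi=\min_{\mathcal{C}^+_{a,h,L}}\overline{\mathcal J}_{+}$ from Lemma \ref{explicit1}, whereas the paper's computation is self-contained in the $(\xi,\eta)$ variables. All your individual computations (the sign of $\psi_0'(c)=(1-c^2)/(1+c^2)^2$, the value $\int_0^a(x-a/3)\,dx=a^2/6$, and the reduced form of $\overline{\mathcal J}_{+}$ from Lemma \ref{representationlemma}) check out.
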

\begin{proof} Let $(\xi_*,\eta_*)$ be a minimizer of $\Psi$ over $\mathcal T$. 
Following the proof of Lemma \ref{explicit1},  there exists $u\in\mathcal K$ such that $(\xi_*,\eta_*)=(\dot u(0),\dot u(a))$ and moreover $$\min_{\mathcal C^+_{a,h,L}}\overline{\mathcal J}_+=\min_{\mathcal T}\Psi=\Psi(\xi_*,\eta_*)=\Psi(\dot u(0),\dot u(a))=\overline{\mathcal J}_+(u)  $$
But  Lemma \ref{uniquelemma} shows that there exists a unique minimizer $u_*$ of $\overline{\mathcal{J}}$ over $\mathcal C_{a,h,L}^+$ (and $u_*\in\mathcal K$ as seen in the proof of Lemma \ref{explicit1}). Therefore  $u$ necessarily coincides with $u_*$. Thus  $(\xi_*,\eta_*)=(\dot u_*(0),\dot u_*(a))$ and this proves uniqueness.

Assume now by contradiction that  $2L<(ah)\wedge a^2$ and $\xi_{*}= \eta_{*}$. Then by \eqref{phi} $\xi_{*}= \eta_{*}=2L/a^{2}$.  If we consider a couple $(\xi,\eta)$ that satisfies \begin{equation}\label{3ipotesi}\eta\in [2L/a^{2},1], \quad \xi\in [0,2L/a^{2}],\quad  
(a^{2}/2-L)\xi+L\eta=L,\end{equation}
then it is readily seen that $(\xi,\eta)\in\mathcal T$:
indeed by setting
\[
u_{\xi,\eta}(x):=\left\{\begin{array}{ll}  \xi x\ \ &\hbox{if} \ \ x\in [0,a-\sqrt {2L}]\\
\eta(x-a+\sqrt {2L})+\xi(a-\sqrt {2L})\ \ &\hbox {if} \ \ x\in [a-\sqrt {2L}, a],
\end{array}
\right.
\]
and by reasoning as done in \eqref{ellephi} we get
$$\Phi (\xi,\eta)=\int_{0}^{a}u_{\xi,\eta}(x)\,dx= (a^{2}/2-L)\xi+L\eta=L.
$$
At the same time, by computing as in \eqref{J=L},
$$\Psi(\xi,\eta)=h-\int_0^a\frac{\dot u_{\xi,\eta}(x)}{1+\dot u_{\xi,\eta}(x)^2}\,dx= h-(a-\sqrt{2L})\frac{\xi}{1+\xi^{2}}-\frac{\eta\sqrt{2L}}{1+\eta^{2}}.$$
If we set 
$$\phi(\eta):=\Psi\left (\frac{2L(1-\eta)}{a^{2}-2L},\eta\right ),\qquad \eta\in[2L/a^2,1],$$
we have 
$\phi(2L/a^{2})=\Psi(\xi_{*},\eta_{*})$
and by taking into account that $2L< a^{2}$ a direct computation shows that 
$$\phi'(2L/a^{2})=\left (\frac{2L(a-\sqrt{2L})}{a^{2}-2L}-\sqrt{2L}\right)\frac{1-4L^{2}/a^{4}}{(1+4L^{2}/a^{4})^{2}}< 0.$$
Hence there exist $1\ge \overline \eta > 2L/a^{2}$ and $0\le \overline \xi= \frac{2L(1-\overline \eta)}{a^{2}-2L}< 2L/a^{2}$ such that the couple $(\overline\xi,\overline\eta)$ satisfies the constraint \eqref{3ipotesi} and
$$\phi(\overline \eta)=\Psi(\overline\xi,\overline \eta)< \Psi(\xi^{*},\eta^{*}),$$
thus contradicting  minimality of $(\xi_{*}, \eta_{*})$.
\end{proof}\EEE
The previous results suggests the following parametric representation of the minimizer. 
\begin{lemma}\label{ultimolemma} Assume that $0< 2L\le (ah)\wedge a^2$.
Let $u_{*}$ be the unique minimizer of $\overline{\mathcal J}_{+}$ over $\mathcal{C}^+_{a,h,L}$ provided by {\rm Lemma \ref{uniquelemma}}. Then either $2L=(ah)\wedge a^2$ and $u_{*}(x)= 2La^{-2}x$ for any $x\in (0,a)$, or the curve $\sigma(x):= (x, u_{*}(x)),\ x\in [0,a]$, is equivalent to the one  parametrized by \eqref{parametr},
where $(\xi_{*}, \eta_{*})$ is the unique minimizer of $\Psi$ on $\mathcal T$,  $\xi_{*}< \eta_{*}$, and $h_{*}:=y_{*}(\eta_{*})=u_{*}(a^-)< h$.
\end{lemma}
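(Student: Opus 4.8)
The plan is to extract this statement as a direct corollary of the preceding lemmas, essentially tracing the construction backwards from $\Psi$-minimizers to minimizers of $\overline{\mathcal J}_+$. First I would invoke Lemma \ref{uniquelemma}, which under the hypothesis $0<2L\le(ah)\wedge a^2$ (so in particular $2L\notin(a^2,2ah-a^2)$) guarantees the existence of a unique minimizer $u_*$ of $\overline{\mathcal J}_+$ over $\mathcal{C}^+_{a,h,L}$; by Lemma \ref{moreproperties} and Corollary \ref{convconc}, $u_*\in W^{1,\infty}(0,a)$, $0\le\dot u_*\le 1$ a.e., $u_*$ is convex on $(0,a)$ and $u_*(0^+)=0$.

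Next I would split according to whether $2L=(ah)\wedge a^2$ or $2L<(ah)\wedge a^2$. In the equality case, Corollary \ref{convconc} already pins down $u_*$: if $2L=ah$ (with $h\le a$), then $u_*(x)=hx/a = 2La^{-2}x$; if $2L=a^2$ (with $h\ge a$), then $u_*(x)=x=2La^{-2}x$. Either way $u_*(x)=2La^{-2}x$ on $(0,a)$, which is the first alternative. In the strict case $2L<(ah)\wedge a^2$, I would appeal to Lemma \ref{xi<eta}: there is a unique minimizer $(\xi_*,\eta_*)$ of $\Psi$ over $\mathcal T$, and $\xi_*<\eta_*$. Then, following the last paragraph of the proof of Lemma \ref{explicit1}, the curve $t\mapsto(x_*(t),y_*(t))$ from \eqref{parametr} has $x_*$ strictly increasing from $[\xi_*,\eta_*]$ onto $[0,a]$, and setting $\tilde u(x):=y_*(x_*^{-1}(x))$ produces an element of the class $\mathcal K$ defined in that proof with $(\dot{\tilde u}(0),\dot{\tilde u}(a))=(\xi_*,\eta_*)$ and $\overline{\mathcal J}_+(\tilde u)=\Psi(\xi_*,\eta_*)=\min_{\mathcal T}\Psi=\min_{\mathcal{C}^+_{a,h,L}}\overline{\mathcal J}_+$ by Lemma \ref{explicit1}. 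Hence $\tilde u$ is a minimizer of $\overline{\mathcal J}_+$ over $\mathcal{C}^+_{a,h,L}$, and by the uniqueness from Lemma \ref{uniquelemma} it must coincide with $u_*$. Therefore $\sigma(x)=(x,u_*(x))$ is equivalent (as a curve, by the reparametrization $x=x_*(t)$) to the one parametrized by \eqref{parametr}.

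It remains to check $h_*:=y_*(\eta_*)=u_*(a^-)<h$. The identity $y_*(\eta_*)=u_*(a^-)$ is immediate since $x_*(\eta_*)=a$ and $u_*(x_*(t))=y_*(t)$. The strict inequality $u_*(a^-)<h$ follows because $u_*$ is convex with $u_*(0^+)=0$ and $0\le\dot u_*\le 1$, so $u_*(a^-)=\int_0^a\dot u_*\le a$; more to the point, if $u_*(a^-)=h$ were to hold, then combined with convexity and $u_*(0^+)=0$ one shows $u_*(x)\ge hx/a$ (in the case $h\le a$) or $u_*(x)\ge x$ (in the case $h\ge a$), forcing $\int_0^a u_*\ge (ah/2)\wedge(a^2/2)=L$ with equality only when $2L=(ah)\wedge a^2$, contradicting $2L<(ah)\wedge a^2$; alternatively one may simply note this was already recorded in Lemma \ref{concaveconvex}/Corollary \ref{convconc}, since a convex minimizer with $u_*(0^+)=0$ attaining the value $h$ at $a^-$ would be ruled out exactly in the strict range. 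I do not expect any genuine obstacle here: all the analytic work (Euler--Lagrange equation, convexity, the change of variables $t=\dot u(x)$, the uniqueness) has been done in Lemmas \ref{explicit1} and \ref{xi<eta}, and this lemma is a repackaging. The only mildly delicate point is phrasing the "equivalence" of the two parametrized curves correctly, i.e.\ that $\sigma$ and \eqref{parametr} represent the same rectifiable simple curve via the orientation-preserving change of parameter $x_*$, which is exactly the computation carried out in the proof of Lemma \ref{explicit1}.
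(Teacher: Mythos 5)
Most of your proposal tracks the paper's own proof: the equality case is handled via Corollary \ref{convconc}, and the identification of $\sigma$ with the curve \eqref{parametr} is obtained by building from the unique $\Psi$-minimizer $(\xi_*,\eta_*)$ of Lemma \ref{xi<eta} an element of $\mathcal K$ realizing $\min_{\mathcal T}\Psi=\min_{\mathcal C^+_{a,h,L}}\overline{\mathcal J}_+$ and invoking the uniqueness of Lemma \ref{uniquelemma} --- exactly as in the paper. The genuine gap is in your proof of $h_*<h$. You claim that convexity together with $u_*(0^+)=0$ and $u_*(a^-)=h$ yields $u_*(x)\ge hx/a$ when $h\le a$; but a convex function lies \emph{below} its chord, so the correct inequality is $u_*(x)\le hx/a$, which only gives $L=\int_0^a u_*\le ah/2$ --- perfectly consistent with $2L<ah$, so no contradiction arises. (In the complementary case $h\ge a$ your inequality $u_*(x)\ge x$ is true, but it comes from $\dot u_*\le 1$, i.e.\ $u_*(x)\ge u_*(a^-)-(a-x)=h-a+x\ge x$, not from convexity; there the contradiction $2L\ge a^2$ does go through.) Your fallback --- that $u_*(a^-)<h$ ``was already recorded in Lemma \ref{concaveconvex}/Corollary \ref{convconc}'' --- is also unavailable: those results only assert that the convex minimizer satisfies $u_*(0^+)=0$; the boundary condition $u(a^-)=h$ is the one attached to the \emph{concave} alternative, and nothing there forbids a convex minimizer from reaching the value $h$ at $a^-$.

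The paper closes this step by a computation rather than a soft argument: integrating by parts in $L=\int_0^a u_*$ and in $h_*=\int_0^a\dot u_*$, changing variables to $t=\dot u_*(x)$ with $x_*(\xi_*)=0$, $x_*(\eta_*)=a$, and combining the two resulting identities into
\[
h_{*}=\frac{2L}{a}+\int_{\xi_{*}}^{\eta_{*}}\left(x_{*}(t)-\frac{x_{*}(t)^{2}}{a}\right)dt,
\]
from which it concludes $h_*<h$ by comparison with $2L/a<h$ (valid since $2L<ah$). So the inequality $h_*<h$ hinges on the explicit parametrization and this quantitative identity, not on convexity alone; your argument cannot deliver it in the case $h\le a$, which is precisely the case where $(ah)\wedge a^2=ah$. (Be aware, incidentally, that since $0\le x_*\le a$ the integrand above is nonnegative, so the paper's final step ``$\le 2L/a$'' itself deserves scrutiny; in any event, the route to $h_*<h$ must go through minimality or this identity, and the chord argument you propose is not a substitute.)
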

\begin{proof} By Corollary \ref{convconc} if $2L=(ah)\wedge a^2$ then $u_{*}(x)= 2La^{-2}x$ for any  $x\in (0,a)$.  Assume now that  $2L< (ah)\wedge a^2$: if $(\xi_{*},\eta_{*})\in \argmin_{\mathcal T} \Psi$  then by Lemma \ref{xi<eta} $\xi_{*}<\eta_{*}$,  the unique minimizer $u_{*}$ can be parametrized as in \eqref{parametr} and in particular 
$(\xi_{*},\eta_{*})$ is the unique minimizer of $\Psi$ on $\mathcal T$.
We have only to prove that $h_{*}<h$. Indeed
\begin{equation*}
\begin{aligned}
 L&=\int_{0}^{a}u_{*}(x)\,dx=au_{*}(a^-)-\int_{0}^{a}xu_{*}'(x)\,dx\\
&=ah_{*}-\int_{\xi_{*}}^{\eta_{*}}tx_{*}(t)x_{*}'(t)\,dt=ah_{*}-\eta_{*}\frac{a^{2}}{2}+\frac{1}{2}\int_{\xi_{*}}^{\eta_{*}}x_{*}(t)^{2}\,dt
\end{aligned}
\end{equation*}
and 
\begin{equation*}
h_{*}=\int_{0}^{a}u_{*}'(x)\,dx=\int_{\xi_{*}}^{\eta_{*}}tx_{*}'(t)\,dt=a\eta_{*}-\int_{\xi_{*}}^{\eta_{*}}x_{*}(t)\,dt.
\end{equation*}
By gathering together the two last relations we get
\begin{equation*}
h_{*}=\frac{2L}{a}+\int_{\xi_{*}}^{\eta_{*}}\left(x_{*}(t)-\frac{x_{*}(t)^{2}}{a}\right)\,dt\le \frac{2L}{a}< h,
\end{equation*}
thus concluding the proof.
\end{proof}

\begin{proofad2} 

 By Lemma \ref{FGJ} and Lemma \ref{explicit1} 
  we get
 \begin{equation}\label{bellissima}
\inf \mathcal F=\min_{\mathcal C^+_{a,h,L}} \overline{\mathcal J}_{+}=\min_{\mathcal T}\Psi.
\end{equation}
Let $\gamma_{*}\in \mathcal A_{a,h,L}$ as in \eqref{sol2}, $x_{*}, y_{*}, u_{*}$ as in Lemma \ref{ultimolemma}: since $h_{*}:= u_{*}(a^-)$ and $0\le \dot u_{*}\le 1$, a direct computation shows that \begin{equation*}
\mathcal F(\gamma_{*})=\int_{0}^{a} {\frac{\dot u_{*}^{3}}{1+\dot u_{*}^{2}}}\,dx+h-h_{*}=h+\int_{0}^{a}\left ( {\frac{\dot u_{*}^{3}}{1+\dot u_{*}^{2}}}-\dot u_{*}\right)\,dx=
\overline{\mathcal J}_{+}(u_{*})=\min_{{\mathcal C}^{+}_{a,h,L}} \overline{\mathcal J}_{+}
\end{equation*}
and the result follows easily  by taking  \eqref{bellissima} into account\EEE.
\end{proofad2}

\begin{remark}\label{allafine}\rm
 If we change $L$ to $ah-L$, the unique solution is given by $\tilde\gamma_{*}$ from \eqref{sterne}. Indeed, by considering the construction of the solution, this is a consequence of Remark \ref{endingremark}.
\end{remark}

The following simple lemma will be used for proving Theorem \ref{main4}.
\begin{lemma}\label{aid} Let $\mathcal S:=\{(\xi,\eta):0\le\xi\le\eta\le1\}$ and let $\Phi:\mathcal S\to\mathbb R$ be the function defined by \eqref{phi}. Then $\partial_\xi\Phi(\xi,\eta)>0$
for any $(\xi,\eta)\in \mathcal S$ such that $0<\xi<\eta$,  $\partial_\eta\Phi(0,\eta)>0$ for any  $\eta\in(0,1)$, and $q'(\xi)>0$ for any $\xi\in(0,1)$, where $q(\xi):=\Phi(\xi,\xi)$. Moreover, $\Phi(\mathcal S)=[0,a^2/2]$.
\end{lemma}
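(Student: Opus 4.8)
The plan is to reduce everything to elementary properties of $g$ together with a quotient-rule computation. First I would record that, for $z\ge0$, $g(z)=z^3/(1+z^2)$, so $g'(z)=(3z^2+z^4)/(1+z^2)^2$ and $g''(z)=2z(3-z^2)/(1+z^2)^3$; in particular $g'(0)=0$, $g'$ is strictly increasing on $[0,1]$, and $g''(z)>0$ for every $z\in(0,1)$. These facts guarantee that $g'(\eta)-g'(t)\ge0$ and $g'(\eta)-g'(\xi)>0$ whenever $0\le\xi\le t\le\eta\le1$ with $\xi<\eta$, so $\Phi$ is well defined, nonnegative, and smooth on $\{0\le\xi<\eta\le1\}$; continuity up to the diagonal follows from the crude bound $0\le\int_\xi^\eta(\,\cdot\,)^2\,dt\le\eta-\xi$ on the integral term in \eqref{phi}. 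The claim $q'(\xi)>0$ is then immediate, since that integral vanishes on the diagonal, whence $q(\xi)=\Phi(\xi,\xi)=a^2\xi/2$ and $q'(\xi)=a^2/2>0$.

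For $\partial_\xi\Phi$, I would set $D:=g'(\eta)-g'(\xi)$ and $I:=\int_\xi^\eta(g'(\eta)-g'(t))^2\,dt$, so that $\Phi=\tfrac{a^2\xi}{2}+\tfrac{a^2}{2}\,I\,D^{-2}$. Differentiating in $\xi$, with $\partial_\xi I=-D^2$ (from the lower endpoint) and $\partial_\xi D=-g''(\xi)$, the quotient rule gives, after the $-D^3$ term cancels against the contribution of $\tfrac{a^2\xi}{2}$,
\[
\partial_\xi\Phi(\xi,\eta)=\frac{a^2\,g''(\xi)}{(g'(\eta)-g'(\xi))^3}\int_\xi^\eta(g'(\eta)-g'(t))^2\,dt,
\]
which is strictly positive when $0<\xi<\eta$ since $g''(\xi)>0$ and the integrand is positive a.e.\ on $(\xi,\eta)$. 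For $\partial_\eta\Phi(0,\eta)$, using $g'(0)=0$, I would write $\Phi(0,\eta)=\tfrac{a^2}{2}\,g'(\eta)^{-2}\int_0^\eta(g'(\eta)-g'(t))^2\,dt$ and differentiate; the key algebraic step is that, setting $F=\int_0^\eta(g'(\eta)-g'(t))^2\,dt$, one has $g'(\eta)-(g'(\eta)-g'(t))=g'(t)$, so that $F'(\eta)\,g'(\eta)-2F(\eta)\,g''(\eta)=2g''(\eta)\int_0^\eta(g'(\eta)-g'(t))\,g'(t)\,dt$, leading to
\[
\partial_\eta\Phi(0,\eta)=\frac{a^2\,g''(\eta)}{g'(\eta)^3}\int_0^\eta(g'(\eta)-g'(t))\,g'(t)\,dt>0\qquad(\eta\in(0,1)),
\]
the positivity coming again from $g''(\eta)>0$, $g'(t)>0$, and $g'(\eta)-g'(t)\ge0$ on $(0,\eta)$.

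Finally, for the range I would argue by two-sided inclusion. The inclusion $\Phi(\mathcal S)\subseteq[0,a^2/2]$ follows from $\Phi\ge0$ together with $0\le\bigl(\tfrac{g'(\eta)-g'(t)}{g'(\eta)-g'(\xi)}\bigr)^2\le1$ on $[\xi,\eta]$, which gives $\Phi(\xi,\eta)\le\tfrac{a^2\xi}{2}+\tfrac{a^2}{2}(\eta-\xi)=\tfrac{a^2\eta}{2}\le\tfrac{a^2}{2}$. The reverse inclusion is the intermediate value theorem: $\mathcal S$ is convex, hence connected, $\Phi$ is continuous on $\mathcal S$, and $\Phi(0,0)=0$ while $\Phi(1,1)=a^2/2$, so $\Phi$ attains every value in between. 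The computations are routine; the only mildly delicate points are checking continuity of $\Phi$ across the diagonal $\xi=\eta$ (handled by the crude bound on the integral term) and carrying out the two quotient-rule differentiations so that the apparently messy expressions collapse to the clean positive formulas above, so no genuine obstacle is expected.
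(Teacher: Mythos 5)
Your proof is correct and follows essentially the same route as the paper: explicit differentiation of $\Phi$, positivity from $g''>0$ on $(0,1]$ together with the monotonicity of $g'$, and continuity of $\Phi$ on the connected set $\mathcal S$ plus the endpoint values $\Phi(0,0)=0$, $\Phi(1,1)=a^2/2$ and the bound $\Phi\le a^2\eta/2$ for the range. In fact your displayed formulas $\partial_\xi\Phi=\frac{a^2 g''(\xi)}{(g'(\eta)-g'(\xi))^3}\int_\xi^\eta(g'(\eta)-g'(t))^2\,dt$ and $\partial_\eta\Phi(0,\eta)=\frac{a^2 g''(\eta)}{g'(\eta)^3}\int_0^\eta(g'(\eta)-g'(t))\,g'(t)\,dt$ are the correct ones; the paper's displayed versions contain harmless misprints (a factor $g''(\eta)$ in place of $g''(\xi)$ in the first, and a missing weight $g'(t)$ in the second integrand) that do not affect the positivity conclusion.
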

\begin{proof}
A computation exploiting \eqref{phi} shows that for any $(\xi,\eta)\in \mathcal S$ such that $0<\xi<\eta$ there holds
\[
\partial_\xi\Phi(\xi,\eta)=\frac{a^2\,g''(\eta)}{(g'(\eta)-g'(\xi))^3}\,\int_\xi^\eta (g'(\eta)-g'(t))^2\,dt.
\]
Similarly, for any $\eta\in(0,1)$ there holds
\[
\partial_\eta\Phi(0,\eta)=\frac{a^2\,g''(\eta)}{g'(\eta)}\,\int_0^\eta\left(1-\frac{g'(t)}{g'(\eta)}\right)\,dt.
\]
Positivity follows by considering the explicit expression of $g$ from \eqref{gi}. The statement about $q$ is obvious since $q(\xi)=\tfrac12\,a^2\xi$.
$\Phi$ is continuous on $\mathcal S$ with $\Phi(0,0)=0$ and $\Phi(1,1)=a^2/2$, thus having checked the sign of the derivatives, we conclude that $\Phi(\mathcal S)=[0,a^2/2]$.
\end{proof}

 \begin{proofad4}

Let us first prove the continuity of $(0,\tfrac12((ah)\wedge a^2)]\ni L\mapsto\mathcal{F}_{min}(a,h,L)$. Let $\mathcal S$ as in Lemma \ref{aid}. If $0<2L\le(ah)\wedge a^2$ we take a sequence  $(L_j)\subset(0,\tfrac12((ah)\wedge a^2))\setminus\{L\}$ such that $L_j\to L$ as $j\to\infty$. By taking advantage of Lemma \ref{explicit1} we take a couple $(\xi_j,\eta_j)$ that minimizes $\Psi$ over $\mathcal T_j:=\{(\xi,\eta):0\le\xi\le \eta\le 1,\, \Phi(\xi,\eta)=L_j\}$, so that $\mathcal F_{min}(a,h,L_j)=\Psi(\xi_j,\eta_j)$. We extract a subsequence (not relabeled) such that $\xi_j\to\xi_\infty$, $\eta_j\to\eta_\infty$ as $j\to\infty$. By continuity of $\Phi$ over $\mathcal S$  we have $(\xi_\infty,\eta_\infty)\in\mathcal T:=\{(\xi,\eta):0\le\xi\le \eta\le 1,\, \Phi(\xi,\eta)=L\}$.

We claim that $(\xi_\infty,\eta_\infty)$ is a minimizer of $\Psi$ over  $\mathcal T$. Indeed, let us assume by contradiction that it is not, and by using Lemma \ref{explicit1} let us take a minimizer $(\hat\xi,\hat\eta)$ of $\Psi$ over $\mathcal T$, 
thus $\Psi(\hat\xi,\hat\eta)=\mathcal F_{min}(a,h,L)$ and $\Psi(\xi_\infty,\eta_\infty)-\Psi(\hat\xi,\hat\eta)=:\sigma>0$. For $\eps>0$, let $\hat B_\eps$ denote the $\eps$-neighbour of $(\hat\xi,\hat \eta)$ in $\mathcal S$. Thanks to Lemma \ref{aid}, for any $\eps>0$ there exists $\delta>0$ such that  the image of $\hat B_\eps$ through the continuous function $\Phi$ contains the interval $(L-\delta,L+\delta)\cap[0,a^2/2]$.     By using the continuity of $\Psi$ in $\mathcal S$,  let $\eps>0$ be small enough such that  $|\Psi(\hat\xi,\hat\eta)-\Psi(\xi,\eta)|<\sigma/2$ for any $(\xi,\eta)\in \hat B_\eps$. Therefore,  we can find $j$ large enough and $(\tilde\xi,\tilde\eta)\in \hat B_\eps$  such that $\Phi(\tilde\xi,\tilde\eta)=L_j$ (hence $(\tilde\xi,\tilde \eta)\in\mathcal T_j$) and such that   $|\Psi(\xi_\infty,\eta_\infty)-\Psi(\xi_j,\eta_j)|<\sigma/2$. Summarizing, we have the three relations
\[
\Psi(\xi_\infty,\eta_\infty)-\Psi(\hat\xi,\hat\eta)=\sigma,\quad |\Psi(\hat\xi,\hat\eta)-\Psi(\tilde\xi,\tilde\eta)|<\sigma/2,\quad
|\Psi(\xi_\infty,\eta_\infty)-\Psi(\xi_j,\eta_j)|<\sigma/2,
\]
and such relations imply $\Psi(\xi_j,\eta_j)>\Psi(\tilde\xi,\tilde\eta)$, contradicting the minimality of $(\xi_j,\eta_j)$ for $\Psi$ on $\mathcal T_j$. The claim is proved, and since the minimizer of $\Psi$ over $\mathcal T$ is unique by Lemma \ref{xi<eta}, the whole sequence $(\xi_j,\eta_j)$ converges to $(\xi_\infty,\eta_\infty)$, yielding 
\[
\lim_{j\to\infty}\mathcal F_{min}(a,h,L_j)=\lim_{j\to\infty}\Psi(\xi_j,\eta_j)=\Psi(\xi_\infty,\eta_\infty)=\mathcal F_{min}(a,h,L).
\]
This proves the continuity of the map $L\mapsto\mathcal{F}_{min}(a,h,L)$ on $(0,\tfrac12((ah)\wedge a^2))$ and the left continuity at $\tfrac12((ah)\wedge a^2)$.

Let us also remark that if $h\le a$,  \eqref{lambdazero}  yields
 $\Psi(h/a,h/a)=\tfrac{h^3}{a^2+h^2}$, therefore by the above left continuity we get $\lim_{L\uparrow ah/2}\mathcal F_{min}(a,h,L)=\tfrac{h^3}{a^2+h^2}$. Similarly,  if $h>a$, still by \eqref{lambdazero} we have $\Psi(1,1)=h-a/2$, hence     $\lim_{L\uparrow a^2/2}\mathcal F_{min}(a,h,L)=h-a/2$. 
In case $h>a$, we also have $\mathcal F_{min}(a,h,L)=h-a/2$ for any $L\in[a^2/2, ah/2]$ as a consequence of the characterization of the optimal energy in the nonuniqueness range, see Theorem \ref{main3}. 

We next notice that $\Phi(\xi,\eta)=0$ implies $\xi=\eta=0$ and the elementary estimate $\Phi(\xi,\eta)\ge \tfrac{a^2}2(\xi\vee(\eta-\xi))$ on $\mathcal S$ shows that $\mathcal T$ shrinks to the origin as $L$ goes to $0$. Since $\Psi(0,0)=h$ we obtain by \eqref{bellissima} that $\lim_{L\downarrow 0}\mathcal F_{min}(a,h,L)=h$ by continuity of $\Phi$ and $\Psi$ over $\mathcal S$. 

All in all, we have proven the continuity of the map $L\mapsto\mathcal{F}_{min}(a,h,L)$ in $(0,ah/2)$, the left continuity at $ah/2$ 
and $\lim_{L\downarrow 0}\mathcal F_{min}(a,h,L)=h$.
 The symmetry around $L=ah/2$ follows from Remark \ref{allafine}, and then it implies continuity on $(0,ah)$.

Let us eventually discuss the monotonicity.
Let $h\le a$.  Of course we have $\mathcal{F}_{min}(a,h,L)<h$  (see Lemma \ref{<h} and  Remark \ref{segment}). If $L<ah/2$ is increased to a close value $\overline L$, still with $2\overline L\le  ah $, from the curve that realizes the value $\mathcal F_{min}(a,h,L)$ we take a piecewise affine interpolating curve whose subtended area is $\overline L$. The energy goes down by convexity (slopes are smaller than $1$). Therefore $\mathcal{F}_{min}(a,h,\overline L)<\mathcal{F}_{min}(a,h,L)$, proving the monotonicity. The range is $[\tfrac{h^3}{a^2+h^2},h)$, as we have already obtained the continuity and the limit values at $L=0$ and $L=ah/2$.
About the case $2L\ge ah$, we obtain the desired monotonicity by making use of the symmetry of the optimal energy values  around $L=ah/2$. 
Let now $h>a$: we cross the nonuniqueness regime as $L$ grows from $0$ to $ah$.
The argument is the same, also taking into account that $\mathcal{F}_{min}(a,h,L)=h-a/2$ for any $L\in [a^2/2, ah-a^2/2]$ as seen in the proof of Theorem \ref{main3}.
 \end{proofad4}
 \EEE

\subsection*{Acknowledgements}
The authors are members of the
GNAMPA group of the Istituto Nazionale di Alta Matematica\newline (INdAM).

\Addresses

\end{document}